\newtheorem{theorem}{Theorem}[section]
\newtheorem{lemma}[theorem]{Lemma}
\newtheorem{corollary}[theorem]{Corollary}
\theoremstyle{definition}
\newtheorem{remark}[theorem]{Remark}
\newtheorem{definition}[theorem]{Definition}
\numberwithin{equation}{section}
\begin{document}

\title{\bf\Large Matrix-Weighted Besov-Type and Triebel--Lizorkin-Type Spaces II:
Sharp Boundedness of Almost Diagonal Operators
\footnotetext{\hspace{-0.35cm} 2020 {\it Mathematics Subject Classification}.
Primary 46E35; Secondary 47A56, 42B25, 42B35.\endgraf
{\it Key words and phrases.}
matrix weight,
Besov-type space,
Triebel--Lizorkin-type space,
$A_p$-dimension,
almost diagonal operator.\endgraf
This project is supported by
the National Key Research and Development Program of China
(Grant No.\ 2020YFA0712900),
the National Natural Science Foundation of China
(Grant Nos.\ 12371093, 12071197, 12122102, 12326307, and 12326308),
the Fundamental Research Funds
for the Central Universities (Grant No. 2233300008),
and the Research Council of Finland (Grant Nos.\ 314829, 346314, and 364208).}}
\date{}
\author{Fan Bu, Tuomas Hyt\"onen,
Dachun Yang\footnote{Corresponding author, E-mail:
\texttt{dcyang@bnu.edu.cn}/{\color{blue} August 19, 2024/Relation to new results of \cite{ks24} addressed.}}\ \ and Wen Yuan}

\maketitle

\vspace{-0.8cm}

\begin{center}
\begin{minipage}{13cm}
{\small {\bf Abstract}\quad
This article is the second one of three successive articles
of the authors on the matrix-weighted Besov-type and Triebel--Lizorkin-type spaces.
In this article, we obtain the sharp boundedness of almost diagonal operators
on matrix-weighted Besov-type and Triebel--Lizorkin-type sequence spaces.
These results not only possess broad generality but also improve {\color{red}several existing related results in various special cases covered by this family of spaces.
This improvement depends, on the one hand, on the notion of $A_p$-dimensions of matrix weights and their properties introduced in the first article of this series and, on the other hand, on a careful direct analysis of sequences of averages avoiding maximal operators.}
% and thereby circumventing the need for a Fefferman--Stein vector-valued maximal inequality, which is unknown in the matrix-weighted setting.
{\color{blue}While a recent matrix-weighted extension of the Fefferman--Stein vector-valued maximal inequality would provide an alternative route to some of our results in the restricted range of function space parameters $p,q\in(1,\infty)$, our approach covers the full scale of exponents $p\in(0,\infty)$ and $q\in(0,\infty]$ that is relevant in the theory of function spaces.}
}
\end{minipage}
\end{center}

%\vspace{0.2cm}

\tableofcontents

\vspace{0.1cm}

\section{Introduction}

This article is the second one of our three successive articles in which
we develop a complete real-variable theory of matrix-weighted Besov-type
and Triebel--Lizorkin-type spaces of $\mathbb C^m$-valued distributions
on $\mathbb R^n$. {\color{red}
The article at hand deals with the sequential versions of these spaces, and can be studied largely independently of the other two; but the interplay of all three articles of becomes relevant in order to connect the present results to the actual function spaces of Besov-type and Triebel--Lizorkin-type. The attribute ``type'' refers to the fact that these are generalisations of the usual three-parameter Besov and Triebel--Lizorkin spaces, incorporating a fourth index $\tau$ that measures a Morrey space type nature of these spaces; the usual three-parameter versions are recovered by setting $\tau=0$.}
We consistently denote by $m$ the dimension of the target space
of our distributions and hence our matrix-weights take values
in the space of $m\times m$ complex matrices.

In the first article of this series \cite{bhyy1}, we introduced the matrix-weighted Besov-type space
$\dot B^{s,\tau}_{p,q}(W)$ and the matrix-weighted Triebel--Lizorkin-type space
$\dot F^{s,\tau}_{p,q}(W)$, for any {\color{red}smoothness $s\in\mathbb{R}$, Morrey index $\tau\in[0,\infty)$,
integrability $p\in(0,\infty)$, and microscopic parameter $q\in(0,\infty]$}, and established their $\varphi$-transform
characterisation {\color{red}in terms of related sequence spaces $\dot b^{s,\tau}_{p,q}(W)$ and $\dot f^{s,\tau}_{p,q}(W)$, whose elements are sequences $\vec t=\{\vec t_Q\}_{Q\in\mathscr Q}\subset \mathbb C^m$ of vectors $\vec t_Q\in\mathbb C^m$ indexed by the family of dyadic cubes
\begin{equation}\label{dyadic cubes}
  \mathscr Q:=\Big\{Q:=x_Q+\ell(Q)[0,1)^n\equiv x_Q+2^{-j_Q}[0,1)^n:j_Q=-\log_2\ell(Q)\in \mathbb Z, 2^{j_Q}x_Q\in\mathbb Z^n\Big\}
\end{equation}
of $\mathbb R^n$};
see \cite{bhyy1} for details and history of these spaces and more related references.

{\color{red}Thanks to the said characterisation, in this article, we concentrate on the sequence spaces $\dot a^{s,\tau}_{p,q}(W)\in\{\dot b^{s,\tau}_{p,q}(W),\dot f^{s,\tau}_{p,q}(W)\}$ only, and we consider a general class of operators on these spaces represented by infinite matrices $B:=\{b_{Q,P}\}_{Q,P\in\mathscr Q}\subset\mathbb C$ indexed by dyadic cubes. The coefficients $b_{Q,P}$ are assumed to satisfy the following form of decay in terms of the separation of the cubes $Q,P$ in both position and scale:
\begin{equation*}
   |b_{Q,R}|\leq Cb_{Q,R}^{DEF}
   :=C\left[1+\frac{|x_Q-x_R|}{\ell(Q)\vee\ell(R)}\right]^{-D}
\left\{\begin{aligned}
&\left[\frac{\ell(Q)}{\ell(R)}\right]^E&&\text{if }\ell(Q)\leq\ell(R),\\
&\left[\frac{\ell(R)}{\ell(Q)}\right]^F&&\text{if }\ell(R)<\ell(Q),
\end{aligned}\right.
\end{equation*}
with a positive constant $C$ and parameters $D,E,F\in\mathbb R$. Following Frazier and Jawerth \cite{fj90} and several subsequent works, such a matrix $B$ will be called {\em $(D,E,F)$-almost diagonal}. It is immediate that this condition becomes stronger when any of the parameters $D,E,F$ is increased; thus, it is of interest to obtain results for such matrices under minimal assumptions on $D,E,F$.

It is well known since the work of Frazier and Jawerth \cite{fj90} that several classical operators acting on spaces of distributions, when conjugated with the $\varphi$-transform, will be represented by almost diagonal operators on the related sequence spaces.} This method has been further developed and used by Bownik \cite{b05,b07},
Bownik and Ho \cite{bh06}, Yang and Yuan \cite{yy10}, Cleanthous
et al. \cite{cgn19}, Georgiadis et al. \cite{gkkp19,gkp21},
and Frazier and Roudenko \cite{fr04,fr21}.
{\color{red}In the final article of this series, \cite{bhyy3}, our present results on almost diagonal operators on the sequence spaces $\dot a^{s,\tau}_{p,q}(W)\in\{\dot b^{s,\tau}_{p,q}(W),\dot f^{s,\tau}_{p,q}(W)\}$ will be used
to obtain optimal characterizations by molecules and wavelets, trace theorems,
and the optimal boundedness of pseudo-differential operators and Calder\'on--Zygmund operators on the
function spaces $\dot A^{s,\tau}_{p,q}(W)\in\{\dot B^{s,\tau}_{p,q}(W),\dot F^{s,\tau}_{p,q}(W)\}$,
extending and improving the works just mentioned. This will depend on related improvements over existing results on the level of sequence spaces and almost diagonal operators in the work at hand.}

{\color{red}
To give an idea of our results, and how they compare with existing literature, let us consider the important special case of ``plain'' Besov and Triebel--Lizorkin sequence spaces $\dot a^s_{p,q}(W)=\dot a^{s,0}_{p,q}(W)$ with Morrey index $\tau=0$; here and below, we denote by $\dot a\in\{\dot b,\dot f\}$ a generic sequence space of either Besov or Triebel--Lizorkin type. To streamline the notation, it is also helpful to introduce the (usual) notation
\begin{equation*}
J:=\left\{\begin{aligned}
&\frac{n}{\min\{1,p\}}&&\text{if we are dealing with a Besov-type space},\\
&\frac{n}{\min\{1,p,q\}}&&\text{if we are dealing with a Triebel--Lizorkin-type space}.
\end{aligned}\right.
\end{equation*}

The classical work of Frazier and Jawerth \cite{fj90} shows that $(D,E,F)$-almost diagonal operators are bounded on the {\em unweighted} $\dot a^s_{p,q}$ spaces, provided that
\begin{equation}\label{fj90 conditions}
  D>J,\quad E>\frac{n}{2}+s,\quad\text{and}\quad F>J-\frac{n}{2}-s.
\end{equation}
(To be precise, \cite[Theorem 3.3]{fj90} proves this in the Triebel--Lizorkin case, but the Besov case is similar and easier.) Prior to our work, previous extensions of this result to matrix-weighted spaces have been stated in terms of the so-called {\em doubling exponent} $\beta_W\geq n$ of the weight $W$; since we will not use this notion except when comparing our results to previous works, we refer the reader to \cite[Definition 1.5]{ro03} for the definition. The previous sufficient $(D,E,F)$-almost diagonality conditions for boundedness on $\dot b^s_{p,q}(W)$, due to Roudenko \cite[Theorem 1.10]{ro03} for $p\in[1,\infty)$, and Frazier and Roudenko \cite[Theorem 4.1]{fr04} for $p\in(0,1]$, then read as
\begin{equation}\label{besov old}
  D>J+\frac{\beta_W-n}{p},\quad E>\frac{n}{2}+s,\quad\text{and}\quad F>J-\frac{n}{2}-s+\frac{\beta_W-n}{p},
\end{equation}
where the extra nonnegative term $(\beta_W-n)/p$ has been added to two of the conditions in \eqref{fj90 conditions}. On the other hand, to obtain boundedness on $\dot f^s_{p,q}(W)$, the recent work of Frazier and Roudenko \cite[Theorem 2.6]{fr21} required the still stronger assumption that
\begin{equation}\label{TL old}
  D>J+\frac{\beta_W}{p},\quad E>\frac{n}{2}+s+\frac{n}{p},\quad\text{and}\quad F>J-\frac{n}{2}-s+\frac{\beta_W-n}{p},
\end{equation}
where a further strictly positive term $n/p$ has been added to two of the conditions in \eqref{besov old}.}

Frazier and Roudenko \cite[p.\,527]{fr21} \emph{conjectured}
that possibly a weaker definition of the almost diagonality
can be found by using properties of $A_p$-matrix weights,
but they did not find a satisfactory answer.
We completely confirm this conjecture in this article via
introducing some methods that are new even for scalar weights. {\color{red}
As a special case of our new results, we show that the classical Frazier--Jawerth conditions \eqref{fj90 conditions} for unweighted $\dot a^s_{p,q}$ remain sufficient for boundedness of $(D,E,F)$-almost diagonal operators in the full generality of matrix-weighted $\dot a^s_{p,q}(W)$ for all $W\in A_p$. Thus, we not only improve the earlier conditions \eqref{besov old} and \eqref{TL old} but also simplify the picture by eliminating a dependence of these conditions on the parameters of the particular matrix weight. In the converse direction, we also provide examples to show that these conditions \eqref{fj90 conditions} are sharp for any of the Besov sequence space $\dot b^s_{p,q}$, as well as for ``most'' of the Triebel--Lizorkin sequence spaces $\dot f^s_{p,q}$, subject only to the restriction that $q\geq\min(1,p)$. This sharpness is new even in the unweighted spaces, where it complements the classical theory of Frazier and Jawerth \cite{fj90}. The examples showing the sharpness are somewhat elaborate, and actually constitute a significant part of this work; they will be presented in separate sections below.

We refer the reader to the body of the paper for the complete versions of our results in the full range of the $\dot a^{s,\tau}_{p,q}(W)$ spaces with the Morrey index $\tau$, and their relation to other existing results, like ones in \cite{t13,yy10}, that we will also extend or improve.}

{\color{red}As for our methods,} recall that, for both Besov and Triebel--Lizorkin spaces and weighted counterparts,
the boundedness of almost diagonal operators can be obtained
with the help of the (weighted) Fefferman--Stein vector-valued maximal inequality.
Whether the matrix-weighted Fefferman--Stein vector-valued inequality
still holds {\color{blue} was an open problem (see, for instance, \cite[Question 7.3]{dkps})
at the time of our investigation, so we had} to seek some new methods to escape this.
{\color{blue}After our completing this study, the mentioned problem has been recently solved in \cite{ks24}, and this could be used to give an alternative route to some of our results for exponents $p,q\in(1,\infty)$. However, even after the availability of the new results of \cite{ks24}, our approach has the benefit of covering the full scale of $p\in(0,\infty)$ and $q\in(0,\infty]$ that is relevant in the theory of function spaces.

Note that Frazier and Roudenko \cite{fr04,fr21}
established the boundedness of almost diagonal operators on
the matrix-weighted Triebel--Lizorkin sequence spaces $\dot f^s_{p,q}(W)$ in yet another way,
by a reduction to corresponding results on the unweighted sequence spaces $\dot f^s_{p,q}$.}
Their proof is straightforward but requires some
strong extra assumptions for almost diagonal operators.
This limitation makes the boundedness of non-convolution Calder\'on--Zygmund operators
on matrix-weighted Triebel--Lizorkin spaces $\dot F^s_{p,q}(W)$
be less natural \cite[p.\,527]{fr21},
so Frazier and Roudenko \cite{fr21} only studied
convolution Calder\'on--Zygmund operators.
Thus, in order to obtain the optimal boundedness
of non-convolution Calder\'on--Zygmund operators in {\color{red}our follow-up work} \cite{bhyy3},
we need to essentially improve their results on the boundedness
of almost diagonal operators.

{\color{red}
Our substitute for the lack of a Fefferman--Stein vector-valued maximal inequality {\color{blue} in the full range of parameters that we want to cover} is not a single tool to be stated, proved, and then applied, but rather a philosophy to be followed throughout the argument: Whenever we encounter an average over a cube in our analysis (and the reader familiar with the estimation of operators of harmonic analysis will know that this happens quite often), we must resist the temptation to just dominate it by the maximal operator, but instead keep whatever average is given to us, and then do the best that we can to estimate it in the subsequent steps. In particular, it will often happen that estimating an $\ell^q$ sum over a sequence of functions (as typical in the context of Triebel--Lizorkin norms) will produce a sequence of different averages applied to each of the functions; while dominating all them by the single maximal operator would certainly produce a simpler-looking estimate, it also seems to give away too much precision, and it is preferable to continue working with the seemingly more complicated but eventually more feasible to estimate expression. For an example of such an estimate with a sequence of averages on the right-hand side, and how to estimate it further, the reader can take a look at \eqref{ABt} and the proof of Lemma \ref{ad F1}, respectively.
}

The organization of the rest of this article is as follows:

{\color{red}
The first few sections present the relevant preliminaries. In Section \ref{preliminaries}, we recall the definition of $A_p$-matrix weights and the related concept of matrix $A_p$-dimension, which plays a key role in the formulation of our almost diagonality assumptions in the general case. In Section \ref{BTL}, we define the matrix-weighted Besov-type and Triebel--Lizorkin-type sequence spaces, and their classification, from \cite{bhyy1}, into supercritical, critical, and subcritical spaces, which will also be reflected in the way that these different cases are treated in the proof of the main results. In Section \ref{almost}, we {\color{red}discuss the almost diagonal operators, the main theme of this paper, and} recall the existing results, Theorem \ref{ad FJ-YY}, about their boundedness on several sequence spaces related to $\dot a^{s,\tau}_{p,q}(W)$.

In Section \ref{better}, we formulate the main new result, Theorem \ref{ad BF},
about the boundedness of almost diagonal operators on $\dot a^{s,\tau}_{p,q}(W)$,
which improves all cases of Theorem \ref{ad FJ-YY} in the subcritical range.
We also prove the sharpness of this theorem in several cases.
Since the proof of this theorem is lengthy, it is presented in {\color{red}several subsequent sections, starting with the somewhat simpler cases of ``plain'' Besov and Triebel--Lizorkin spaces with $\tau=0$, which have an independent interest. Moreover, the examples demonstrating the sharpness are presented in their own sections.}

In Section \ref{sec ad Besov}, we provide conditions for the boundedness of
almost diagonal operators on matrix-weighted Besov sequence spaces
$\dot b_{p,q}^s(W)$ and demonstrate their sharpness {\color{red}in Section \ref{sec sharp Besov}.}
In particular, we find that the almost diagonal conditions of $\dot b_{p,q}^s$
given by Frazier et al. \cite[Theorem 6.20]{fjw91} are sharp, which is new.
Recall that Frazier and Roudenko \cite{fr04,ro03} have already established
the boundedness of almost diagonal operators on $\dot b^{s}_{p,q}(W)$,
but our almost diagonal conditions improve theirs.

In Section \ref{sec ad TL},
using shifted systems of dyadic cubes of $\mathbb R^n$ and
substitutes of the Fefferman--Stein vector-valued inequality
in the matrix-weighted form, we obtain some analogous results for
matrix-weighted Triebel--Lizorkin sequence spaces $\dot f_{p,q}^s(W)$.
{\color{red}The sharpness of these results is proved in Section \ref{sec sharp TL},
but we only achieve this for $q\in[1\wedge p,\infty]$.}
In this range, we also obtain the sharpness of the almost diagonal conditions
of $\dot f_{p,q}^s$ given by Frazier et al. \cite[Theorem 6.20]{fjw91}.
Moreover, we improve the boundedness of almost diagonal operators
on $\dot f^{s}_{p,q}(W)$ obtained by Frazier and Roudenko \cite{fr21}.

In Section \ref{sec ad general},
using the results obtained in Sections \ref{sec ad Besov} and \ref{sec ad TL}
and $A_p$-dimensions for matrix weights and their elaborate properties
in terms of reducing operators obtained in \cite{bhyy1},
we then complete the proof of Theorem \ref{ad BF}.
{\color{red}The sharpness in yet another range of parameters is obtained in Section \ref{sec sharp 1p}.}

In Section \ref{special}, we explore some improvements that
we can make in the critical and the supercritical regimes,
so as to also improve all related cases of Theorem \ref{ad FJ-YY}.
In these cases, by \cite[Corollary 4.18]{bhyy1}, we find that
$\dot a^{s,\tau}_{p,q}(W)$ reduces to $\dot f_{\infty,q}^s(\mathbb{A})$.
Therefore, we obtain the boundedness of almost diagonal operators
on $\dot a^{s,\tau}_{p,q}(W)$ by first establishing the boundedness
on $\dot f_{\infty,q}^s(\mathbb{A})$, with sharp bounds when $q\in[1,\infty]$.
In particular, in the same range,
we also find that the almost diagonal conditions of $\dot f_{\infty,q}^s$
given by Frazier and Jawerth \cite[p.\,81]{fj90} are sharp, which is new.
In Section \ref{summary}, we make a summary and obtain the final version
on the boundedness of almost diagonal operators on $\dot a^{s,\tau}_{p,q}(W)$,
which improve Theorem \ref{ad FJ-YY} in all ranges of indices.}

Finally, we make some conventions on notation.
A \emph{cube} $Q$ of $\mathbb{R}^n$ always has finite edge length
and edges of cubes are always assumed to be parallel to coordinate axes,
but $Q$ is not necessary to be open or closed.
For any cube $Q$ of $\mathbb{R}^n$,
let $c_Q$ be its center and $\ell(Q)$ its edge length.
For any $\lambda\in(0,\infty)$ and any cube $Q$ of $\mathbb{R}^n$,
let $\lambda Q$ be the cube with the same center as $Q$ and the edge length $\lambda\ell(Q)$.

{\color{red}A special role is played by the collection $\mathscr Q$ of {\em dyadic cubes}, and the main notation related to them was already fixed in \eqref{dyadic cubes} above. In particular, $j_Q:=-\log_2\ell(Q)\in\mathbb Z$ denotes the generation of $Q$, and $x_Q\in\mathbb R^n$ its ``lower left'' corner.}

The \emph{ball} $B$ of $\mathbb{R}^n$,
centered at $x\in\mathbb{R}^n$ with radius $r\in(0,\infty)$,
is defined by setting
$$
B:=\{y\in\mathbb{R}^n:\ |x-y|<r\}=:B(x,r);
$$
moreover, for any $\lambda\in(0,\infty)$, $\lambda B:=B(x,\lambda r)$.
For any $r\in\mathbb{R}$, $r_+$ is defined as $r_+:=\max\{0,r\}$
and $r_-$ as $r_-:=\max\{0,-r\}$.
For any $a,b\in\mathbb{R}$, $a\wedge b:=\min\{a,b\}$ and $a\vee b:=\max\{a,b\}$.
The symbol $C$ denotes a positive constant which is independent
of the main parameters involved, but may vary from line to line.
The symbol $A\lesssim B$ means that $A\leq CB$ for some positive constant $C$,
while $A\sim B$ means $A\lesssim B\lesssim A$.
Let $\mathbb N:=\{1,2,\ldots\}$ and $\mathbb Z_+:=\mathbb N\cup\{0\}$.
We use $\mathbf{0}$ to denote the \emph{origin} of $\mathbb{R}^n$.
For any set $E\subset\mathbb{R}^n$,
we use $\mathbf 1_E$ to denote its \emph{characteristic function}.
The \emph{Lebesgue space} $L^p(\mathbb{R}^n)$
is defined to be the set of all measurable functions
$f$ on $\mathbb{R}^n$ such that $\|f\|_{L^p(\mathbb{R}^n)}<\infty$, where
$$
\|f\|_{L^p(\mathbb{R}^n)}:=\left\{\begin{aligned}
&\left[\int_{\mathbb{R}^n}|f(x)|^p\,dx\right]^{\frac{1}{p}}
&&\text{if }p\in(0,\infty),\\
&\mathop{\mathrm{\,ess\,sup\,}}_{x\in\mathbb{R}^n}|f(x)|
&&\text{if }p=\infty.
\end{aligned}\right.
$$
The \emph{locally integrable Lebesgue space}
$L^p_{\mathrm{loc}}(\mathbb{R}^n)$ is defined to be the set of
all measurable functions $f$ on $\mathbb{R}^n$ such that,
for any bounded measurable set $E$,
$\|f\|_{L^p(E)}:=\|f\mathbf{1}_E\|_{L^p(\mathbb{R}^n)}<\infty.$
In what follows, we denote $L^p(\mathbb{R}^n)$ and $L^p_{\mathrm{loc}}(\mathbb{R}^n)$
simply, respectively, by $L^p$ and $L^p_{\mathrm{loc}}$.
For any measurable function $w$ on $\mathbb{R}^n$
and any measurable set $E\subset\mathbb{R}^n$, let
$w(E):=\int_Ew(x)\,dx.$
For any measurable function $f$ on $\mathbb{R}^n$
and any measurable set $E\subset\mathbb{R}^n$ with $|E|\in(0,\infty)$, let
$$
\fint_Ef(x)\,dx:=\frac{1}{|E|}\int_Ef(x)\,dx.
$$
Also, when we prove a theorem
(and the like), in its proof we always use the same
symbols as those appearing in
the statement itself of the theorem (and the like).

\section{Preliminaries on Matrix Weights}\label{preliminaries}

In this section, we recall the definitions of $A_p$-matrix weights and
matrix-weighted Besov-type and Triebel--Lizorkin-type sequence spaces.
Let us begin with some basic concepts of matrices.

For any $m,n\in\mathbb{N}$,
the set of all $m\times n$ complex-valued matrices is denoted by $M_{m,n}(\mathbb{C})$
and $M_{m,m}(\mathbb{C})$ is simply denoted by $M_{m}(\mathbb{C})$.
For any $A\in M_{m,n}(\mathbb{C})$,
the \emph{conjugate transpose} of $A$ is denoted by $A^*$.

For any $A\in M_m(\mathbb{C})$, let
$$
\|A\|:=\sup_{\vec z\in\mathbb{C}^m,\,|\vec z|=1}|A\vec z|.
$$

In what follows, we regard $\mathbb{C}^m$ as $M_{m,1}(\mathbb{C})$
and let $\vec{\mathbf{0}}:=(0,\ldots,0)^\mathrm{T}\in\mathbb{C}^m$.
Moreover, for any $\vec z:=(z_1,\ldots,z_m)^\mathrm{T}\in\mathbb{C}^m$,
let $|\vec z|:=(\sum_{i=1}^m|z_i|^2)^{\frac12}$.

A matrix $A\in M_m(\mathbb{C})$ is said to be \emph{positive definite}
if, for any $\vec z\in\mathbb{C}^m\setminus\{\vec{\mathbf{0}}\}$, $\vec z^*A\vec z>0$,
and $A$ is said to be \emph{nonnegative definite} if,
for any $\vec z\in\mathbb{C}^m$, $\vec z^*A\vec z\geq0$.
The matrix $A$ is said to be \emph{invertible} if
there exists a matrix $A^{-1}\in M_m(\mathbb{C})$ such that $A^{-1}A=I_m$,
where $I_m$ is identity matrix.

Now, we recall the concept of matrix weights (see, for instance, \cite{nt96,tv97,v97}).

\begin{definition}
A matrix-valued function $W:\ \mathbb{R}^n\to M_m(\mathbb{C})$ is called
a \emph{matrix weight} if $W$ satisfies that
\begin{enumerate}[\rm(i)]
\item for any $x\in\mathbb{R}^n$, $W(x)$ is nonnegative definite;
\item for almost every $x\in\mathbb{R}^n$, $W(x)$ is invertible;
\item the entries of $W$ are all locally integrable.
\end{enumerate}
\end{definition}

Now, we recall the concept of $A_p$-matrix weights
(see, for instance, \cite[p.\,490]{fr21}).

\begin{definition}\label{def ap}
Let $p\in(0,\infty)$. A matrix weight $W$ on $\mathbb{R}^n$
is called an $A_p(\mathbb{R}^n,\mathbb{C}^m)$-\emph{matrix weight}
if $W$ satisfies that, when $p\in(0,1]$,
$$
[W]_{A_p(\mathbb{R}^n,\mathbb{C}^m)}
:=\sup_{\mathrm{cube}\,Q}\mathop{\mathrm{\,ess\,sup\,}}_{y\in Q}
\fint_Q\left\|W^{\frac{1}{p}}(x)W^{-\frac{1}{p}}(y)\right\|^p\,dx
<\infty
$$
or that, when $p\in(1,\infty)$,
$$
[W]_{A_p(\mathbb{R}^n,\mathbb{C}^m)}
:=\sup_{\mathrm{cube}\,Q}
\fint_Q\left[\fint_Q\left\|W^{\frac{1}{p}}(x)W^{-\frac{1}{p}}(y)\right\|^{p'}
\,dy\right]^{\frac{p}{p'}}\,dx
<\infty,
$$
where $\frac{1}{p}+\frac{1}{p'}=1$.
\end{definition}

In what follows, if there exists no confusion,
we denote $A_p(\mathbb{R}^n,\mathbb{C}^m)$ simply by $A_p$.
Next, we recall the concept of reducing operators (see, for instance, \cite[(3.1)]{v97}).

\begin{definition}\label{reduce}
Let $p\in(0,\infty)$, $W$ be a matrix weight,
and $E\subset\mathbb{R}^n$ a bounded measurable set satisfying $|E|\in(0,\infty)$.
The matrix $A_E\in M_m(\mathbb{C})$ is called a \emph{reducing operator} of order $p$ for $W$
if $A_E$ is positive definite and,
for any $\vec z\in\mathbb{C}^m$,
$$
\left|A_E\vec z\right|
\sim\left[\fint_E\left|W^{\frac{1}{p}}(x)\vec z\right|^p\,dx\right]^{\frac{1}{p}},
$$
where the positive equivalence constants depend only on $m$ and $p$.
\end{definition}

\begin{remark}
In Definition \ref{reduce}, the existence of $A_E$ is guaranteed by
\cite[Proposition 1.2]{g03} and \cite[p.\,1237]{fr04}; we omit the details.
\end{remark}

To obtain the sharp estimate of $\|A_QA_R^{-1}\|$,
we need the concept of $A_p$-dimensions of matrix weights.

\begin{definition}\label{Ap dim}
Let $p\in(0,\infty)$, $d\in\mathbb{R}$, and $W$ be a matrix weight.
Then $W$ is said to have the \emph{$A_p$-dimension $d$}
if there exists a positive constant $C$ such that,
for any cube $Q\subset\mathbb{R}^n$ and any $i\in\mathbb{Z}_+$,
when $p\in(0,1]$,
\begin{equation*}
\mathop{\mathrm{\,ess\,sup\,}}_{y\in2^iQ}\fint_Q \left\|W^{\frac{1}{p}}(x)W^{-\frac{1}{p}}(y)\right\|^p\,dx
\leq C2^{id}
\end{equation*}
or, when $p\in(1,\infty)$,
\begin{equation*}
\fint_Q\left[\fint_{2^iQ}\left\|W^{\frac{1}{p}}(x)W^{-\frac{1}{p}}(y)
\right\|^{p'}\,dy\right]^{\frac{p}{p'}}\,dx
\leq C2^{id},
\end{equation*}
where $\frac{1}{p}+\frac{1}{p'}=1$.
\end{definition}

The following sharp estimate is just \cite[Lemma 2.29]{bhyy1},
which plays a key role in establishing the sharp boundedness of almost diagonal operators.

\begin{lemma}\label{22 precise}
Let $p\in(0,\infty)$, let $W\in A_p$ have the $A_p$-dimension $d\in[0,n)$,
and let $\{A_Q\}_{\mathrm{cube}\,Q}$ be a family of
reducing operators of order $p$ for $W$. If $p\in(1,\infty)$,
let further $\widetilde W:=W^{-\frac 1{p-1}}$ (which belongs to $A_{p'}$)
have $A_{p'}$-dimension $\widetilde d$,
while, if $p\in(0,1]$, let $\widetilde d:=0$.
Let
\begin{equation}\label{Delta}
\Delta:=\frac{d}{p}+\frac{\widetilde d}{p'}.
\end{equation}
Then there exists a positive constant $C$ such that,
for any cubes $Q$ and $R$ of $\mathbb{R}^n$,
\begin{equation}\label{22 p<1}
\left\|A_QA_R^{-1}\right\|
\leq C\max\left\{\left[\frac{\ell(R)}{\ell(Q)}\right]^{\frac{d}{p}},
\left[\frac{\ell(Q)}{\ell(R)}\right]^{\frac{\widetilde d}{p'}}\right\}
\left[1+\frac{|c_Q-c_R|}{\ell(Q)\vee\ell(R)}\right]^\Delta.
\end{equation}
\end{lemma}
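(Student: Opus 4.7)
My plan is to reduce the estimate to two one-sided inclusion bounds via an intermediate enclosing cube and then handle the reverse direction by a duality argument using reducing operators for $\widetilde W$. Fix a cube $P$ containing both $Q$ and $R$ with $\ell(P)\sim(\ell(Q)\vee\ell(R))+|c_Q-c_R|$; the factorization $A_QA_R^{-1}=A_QA_P^{-1}\cdot A_PA_R^{-1}$ reduces the task to bounding $\|A_QA_P^{-1}\|$ with $Q\subset P$ and $\|A_PA_R^{-1}\|$ with $R\subset P$.

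The main ingredient is the \emph{small-to-big} sub-estimate: for $Q\subset P$,
$$\|A_QA_P^{-1}\|\le C\,[\ell(P)/\ell(Q)]^{d/p}.$$
By the defining property of the reducing operators this reduces to showing
$$\fint_Q|W^{1/p}(x)\vec z|^p\,dx\lesssim[\ell(P)/\ell(Q)]^d\fint_P|W^{1/p}(y)\vec z|^p\,dy\quad\text{for every }\vec z\in\mathbb C^m,$$
and the starting point is the pointwise bound $|W^{1/p}(x)\vec z|\le\|W^{1/p}(x)W^{-1/p}(y)\|\,|W^{1/p}(y)\vec z|$ for a.e.\ $y$. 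When $p\in(0,1]$, I raise to the $p$-th power, take the essential supremum over $y\in P$ of the norm factor (controlled by $C[\ell(P)/\ell(Q)]^d$ via the $A_p$-dimension applied to a concentric dyadic dilate of $Q$ containing $P$), and then average over $y\in P$ to produce $|A_P\vec z|^p$ on the right. When $p\in(1,\infty)$, I first average the pointwise bound in $y\in P$ (its left-hand side being constant in $y$) to obtain
$$|W^{1/p}(x)\vec z|\le\fint_P\|W^{1/p}(x)W^{-1/p}(y)\|\,|W^{1/p}(y)\vec z|\,dy,$$
then apply H\"older with exponents $p'$ and $p$, raise to the $p$-th power, and average in $x\in Q$; the resulting mixed integral
$$\fint_Q\left[\fint_P\|W^{1/p}(x)W^{-1/p}(y)\|^{p'}\,dy\right]^{p/p'}dx$$
is precisely the quantity controlled by the $A_p$-dimension definition, again after replacing $P$ by a concentric dyadic dilate of $Q$.

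For the reverse factor $\|A_PA_R^{-1}\|$ with $R\subset P$, I use duality: since $A_P,A_R$ are positive definite, $\|A_PA_R^{-1}\|=\|A_R^{-1}A_P\|$, and for $p\in(1,\infty)$ the inverse $A_R^{-1}$ is equivalent (up to constants depending only on $m$ and $p$) to a reducing operator of order $p'$ for $\widetilde W$, a standard fact recorded in \cite{bhyy1}. Applying the small-to-big sub-estimate to $\widetilde W$ then yields $\|A_PA_R^{-1}\|\le C[\ell(P)/\ell(R)]^{\widetilde d/p'}$. For $p\in(0,1]$ with $\widetilde d:=0$, a direct application of the $A_p$-condition (essentially the small-to-big step with the roles of $P$ and $R$ interchanged) gives $\|A_PA_R^{-1}\|\le C$, matching $[\ell(P)/\ell(R)]^0=1$. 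Multiplying the two sub-estimates and expanding $\ell(P)\sim(\ell(Q)\vee\ell(R))(1+|c_Q-c_R|/(\ell(Q)\vee\ell(R)))$ produces the claimed bound, with the composite exponent $\Delta=d/p+\widetilde d/p'$ appearing on the distance factor.

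I expect the main obstacle to be the H\"older step in the $p>1$ case: the specific $(p,p')$-mixed average built into the $A_p$-dimension definition is tailored exactly to this computation, so the exponents have to be tracked precisely to recover the sharp power $d/p$. The geometric step of enlarging $P$ to a concentric dyadic dilate of $Q$ introduces only harmless bounded dimensional constants.
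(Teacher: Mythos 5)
Your proposal is correct and gives a complete, self-contained argument. Note, however, that the present paper does not prove Lemma~\ref{22 precise}: it is imported verbatim as \cite[Lemma 2.29]{bhyy1}, so there is no internal proof to compare against. With that caveat, your approach is the natural one and I would expect the companion paper to run along similar lines.

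A few small remarks on the details. Your small-to-big estimate $\|A_QA_P^{-1}\|\lesssim[\ell(P)/\ell(Q)]^{d/p}$ for $Q\subset P$ is handled correctly in both regimes: for $p\le 1$ you apply the pointwise bound, dominate $\fint_Q\|W^{1/p}(x)W^{-1/p}(y)\|^p\,dx$ uniformly in $y\in P$ by $[\ell(P)/\ell(Q)]^d$ via the $A_p$-dimension applied to a dyadic dilate $2^iQ\supset P$ with $|2^iQ|\sim|P|$, and then average over $y\in P$; for $p>1$ you average in $y$ first, apply H\"older with $(p',p)$, raise to the $p$-th power, and average in $x$, landing exactly on the mixed $(p,p')$-average of the $A_p$-dimension condition (again after enlarging $P$ to $2^iQ$, using $\fint_P\le\frac{|2^iQ|}{|P|}\fint_{2^iQ}$). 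For the reverse factor, the identity $\|A_PA_R^{-1}\|=\|A_R^{-1}A_P\|$ (valid since $A_P$ and $A_R^{-1}$ are Hermitian), the two-sided comparability $A_R^{-1}\sim\widetilde A_R$ and $A_P\sim\widetilde A_P^{-1}$, and then small-to-big for $\widetilde W$ give $[\ell(P)/\ell(R)]^{\widetilde d/p'}$; for $p\le 1$, the direct $A_p$ computation over $P\supset R$ gives $\|A_PA_R^{-1}\|\lesssim 1$, consistent with $\widetilde d=0$. The final multiplication with $\ell(P)\sim(\ell(Q)\vee\ell(R))\left[1+|c_Q-c_R|/(\ell(Q)\vee\ell(R))\right]$ yields exactly the maximum in \eqref{22 p<1} times the distance factor raised to $\Delta$, as the two ratios $[\ell(R)/\ell(Q)]^{d/p}$ and $[\ell(Q)/\ell(R)]^{\widetilde d/p'}$ cannot both exceed $1$.

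One minor imprecision: the comparability between $A_R^{-1}$ and a reducing operator $\widetilde A_R$ of order $p'$ for $\widetilde W$ has constants depending on $[W]_{A_p}$ (one direction is H\"older, but $\|\widetilde A_RA_R\|\lesssim 1$ genuinely uses the $A_p$ condition), not merely on $m$ and $p$ as you wrote. This does not affect the conclusion, since the constant $C$ in \eqref{22 p<1} is permitted to depend on $W$.
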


\begin{definition}\label{def Ap dims}
Let $p\in(0,\infty)$ and $W\in A_p$ be a matrix weight.
We say that $W$ has $A_p$-dimensions $(d,\widetilde d,\Delta)$ if
\begin{enumerate}[{\rm(i)}]
\item $W$ has the $A_p$-dimension $d$,
\item $p\in(0,1]$ and $\widetilde d=0$, or $p\in(1,\infty)$
and $W^{-\frac 1{p-1}}$(which belongs to $A_{p'}$)
has the $A_{p'}$-dimension $\widetilde d$, and
\item $\Delta$ is the same as in \eqref{Delta}.
\end{enumerate}
\end{definition}

The following lemma is well known; we omit the details.

\begin{lemma}\label{famous 2}
Let $\alpha\in(0,1]$.
Then, for any $\{z_i\}_{i\in\mathbb{N}}\subset\mathbb{C}$,
$(\sum_{i=1}^\infty|z_i|)^{\alpha}
\leq\sum_{i=1}^\infty|z_i|^{\alpha}.$
\end{lemma}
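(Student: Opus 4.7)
The plan is to reduce the infinite-sum statement to a two-term inequality by a standard monotone-limit argument, after first establishing subadditivity of $t\mapsto t^\alpha$ on $[0,\infty)$.

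First I would establish the base case: for any $a,b\in[0,\infty)$, we have $(a+b)^\alpha\leq a^\alpha+b^\alpha$. The quick argument is that if $a+b=0$ there is nothing to prove, and otherwise one sets $s:=a/(a+b)\in[0,1]$ and $t:=b/(a+b)\in[0,1]$, so that $s+t=1$. Since $\alpha\in(0,1]$, the map $x\mapsto x^{\alpha-1}$ is nonincreasing on $(0,\infty)$ and at least $1$ on $(0,1]$, hence $s^\alpha\geq s$ and $t^\alpha\geq t$. Adding gives $s^\alpha+t^\alpha\geq 1$, and multiplying through by $(a+b)^\alpha$ yields the claimed inequality.

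Next, by a routine induction on $N$ using the base case, one obtains the finite-sum version
\begin{equation*}
\left(\sum_{i=1}^{N}|z_i|\right)^{\alpha}\leq\sum_{i=1}^{N}|z_i|^{\alpha}
\end{equation*}
for every $N\in\mathbb N$. The last step is to let $N\to\infty$: the left-hand side converges to $(\sum_{i=1}^{\infty}|z_i|)^{\alpha}$ by continuity of $x\mapsto x^\alpha$ on $[0,\infty]$ (with the convention $\infty^\alpha=\infty$), while the right-hand side is nondecreasing in $N$ and so converges (in $[0,\infty]$) to $\sum_{i=1}^{\infty}|z_i|^\alpha$. Taking the limit in the inequality above preserves it, which is exactly the asserted bound.

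There is no serious obstacle here; the only mild subtlety is the handling of the case $\sum_{i=1}^\infty|z_i|=\infty$, which is taken care of by the conventions above so that both sides are simply $\infty$ and the inequality is trivially valid. Consequently the entire argument is elementary and self-contained, which is consistent with the authors' remark that ``the following lemma is well known; we omit the details''.
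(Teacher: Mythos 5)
Your argument is correct and complete: the two-term subadditivity of $t\mapsto t^{\alpha}$, obtained via normalizing by $a+b$ and using $x^{\alpha}\geq x$ on $[0,1]$, the induction to finite sums, and the monotone passage to the limit are all sound, including the edge case where the series diverges. The paper deliberately omits the proof as ``well known,'' so there is no reference argument to compare against, but what you wrote is exactly the standard proof the authors are alluding to.
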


Sometimes dealing with the range of exponents $q\in(0,\infty)$
leads to a clumsy case study of $q\in(0,1]$, where Lemma \ref{famous 2} can be applied,
and $q\in(1,\infty)$, where this is replaced by H\"older's inequality.
For the sake of efficiency,
we record the following elementary inequality that conveniently combines the two,
and thereby removed the need of some separate case studies.

\begin{lemma}\label{one fits all}
Let $q\in(0,\infty]$ and $a_i,b_i\in[0,\infty)$ for any $i\in\mathscr I$
be numbers with some countable index set $\mathscr I$. Then
\begin{equation*}
\sum_{i\in\mathscr I}a_ib_i
\leq\left[\sum_{i\in\mathscr I}a_i\right]^{(1-\frac1q)_+}
\left[\sum_{i\in\mathscr I}a_i^{\min\{1,q\}}b_i^q\right]^{\frac1q},
\end{equation*}
where the interpretation of the second factor
when $q=\infty$ is just $\|b\|_\infty:=\sup_{i\in\mathscr I}b_i$.
\end{lemma}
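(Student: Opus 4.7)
The plan is to prove the inequality by splitting into the three cases $q\in(0,1]$, $q\in(1,\infty)$, and $q=\infty$, corresponding to the different behaviors of the exponent $(1-\frac1q)_+$ and of $\min\{1,q\}$ on the right-hand side.

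In the case $q\in(0,1]$, the exponent $(1-\frac1q)_+$ equals $0$, so the first factor on the right disappears and $\min\{1,q\}=q$. The desired inequality reduces to
$$
\sum_{i\in\mathscr I}a_ib_i\leq\left[\sum_{i\in\mathscr I}(a_ib_i)^q\right]^{\frac1q},
$$
which is exactly Lemma \ref{famous 2} applied to the sequence $\{a_ib_i\}_{i\in\mathscr I}$ with $\alpha=q$, after raising both sides to the $q$-th power.

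In the case $q\in(1,\infty)$, we have $(1-\frac1q)_+=1-\frac1q$ and $\min\{1,q\}=1$, so the claim becomes a direct consequence of H\"older's inequality with conjugate exponents $\frac{q}{q-1}$ and $q$: write $a_ib_i=a_i^{1-\frac1q}\cdot a_i^{\frac1q}b_i$ and apply H\"older termwise. The case $q=\infty$ is trivial, since $(1-\frac1q)_+=1$ and the right-hand side reduces to $\big(\sum_{i}a_i\big)\cdot\sup_{i}b_i$, which dominates $\sum_i a_ib_i$ by monotonicity.

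There is no real obstacle here; the lemma is essentially a bookkeeping device that unifies the two standard tools (the subadditivity Lemma \ref{famous 2} for $q\le 1$ and H\"older's inequality for $q>1$) into one statement, together with the trivial $q=\infty$ case. The only thing to be careful about is the boundary behavior at $q=1$, where both formulations agree (the first factor becomes $1$ from either side), and the convention that $0^0=1$ is not needed because one may simply read $(1-\frac1q)_+=0$ as the first factor being absent.
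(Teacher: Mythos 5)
Your proof is correct and follows essentially the same route as the paper: the trivial $q=\infty$ case, H\"older's inequality on the factorization $a_ib_i=a_i^{1-\frac1q}\cdot a_i^{\frac1q}b_i$ for $q\in(1,\infty)$, and Lemma \ref{famous 2} applied to $z_i=a_ib_i$ for $q\in(0,1]$. No substantive differences.
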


\begin{proof}
For $q=\infty$, the estimate is trivial.
For any $q\in(1,\infty)$, this is the usual H\"older's inequality,
applied to $a_i^{\frac 1{q'}}$ and $a_i^{\frac 1q}b_i$.
For any $q\in(0,1]$, this is Lemma \ref{famous 2}, applied to $z_i=a_ib_i$;
note that the first factor in the claim
is raised to power $0$ in this case and becomes just $1$.
This finishes the proof of Lemma \ref{one fits all}.
\end{proof}

The following lemmas can be proved by some simple computations; we omit the details.

\begin{lemma}\label{33y}
For any cubes $Q,R\subset\mathbb{R}^n$,
any $x,x'\in Q$, and any $y,y'\in R$,
$$
1+\frac{|x-y|}{\ell(Q)\vee\ell(R)}
\sim1+\frac{|x'-y'|}{\ell(Q)\vee\ell(R)},
$$
where the positive equivalence constants depend only on $n$.
\end{lemma}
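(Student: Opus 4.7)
The plan is to verify the equivalence by a direct application of the triangle inequality together with the trivial diameter bound $\operatorname{diam}(Q)=\sqrt{n}\,\ell(Q)$ for a cube $Q\subset\mathbb{R}^n$. Since the asserted equivalence is symmetric in the two pairs $(x,y)$ and $(x',y')$, I only need to establish one direction and then swap roles.

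First I would write
\begin{equation*}
|x-y|\leq|x-x'|+|x'-y'|+|y'-y|\leq|x'-y'|+\sqrt{n}\,\ell(Q)+\sqrt{n}\,\ell(R)\leq|x'-y'|+2\sqrt{n}\bigl[\ell(Q)\vee\ell(R)\bigr],
\end{equation*}
where the middle inequality uses $x,x'\in Q$ and $y,y'\in R$. Dividing by $\ell(Q)\vee\ell(R)$ and adding $1$ yields
\begin{equation*}
1+\frac{|x-y|}{\ell(Q)\vee\ell(R)}\leq 1+\frac{|x'-y'|}{\ell(Q)\vee\ell(R)}+2\sqrt{n}.
\end{equation*}
Since $1+\tfrac{|x'-y'|}{\ell(Q)\vee\ell(R)}\geq 1$, I can absorb the additive constant $2\sqrt{n}$ into a multiplicative factor $(1+2\sqrt{n})$, giving
\begin{equation*}
1+\frac{|x-y|}{\ell(Q)\vee\ell(R)}\leq (1+2\sqrt{n})\left[1+\frac{|x'-y'|}{\ell(Q)\vee\ell(R)}\right].
\end{equation*}
The reverse inequality follows by interchanging the roles of $(x,y)$ and $(x',y')$, with the same constant $1+2\sqrt{n}$ that depends only on $n$.

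There is essentially no obstacle here; the only point worth noting is the standard trick of passing from an additive estimate to a multiplicative one by exploiting that the quantities being compared are bounded below by $1$. I would present the argument as a short two-line computation and omit further detail, consistent with the paper's phrasing that the lemma can be proved by simple computations.
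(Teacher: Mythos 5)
Your proof is correct and is precisely the sort of direct triangle-inequality-plus-diameter argument that the paper has in mind when it says the lemma "can be proved by some simple computations"; the paper omits the proof entirely.
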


\begin{lemma}
Let $a\in(n,\infty)$.
Then, for any $j\in\mathbb{Z}$ and $y\in\mathbb{R}^n$,
\begin{equation}\label{33x}
\int_{\mathbb{R}^n}\frac{2^{jn}}{(1+|2^jx+y|)^a}\,dx
\sim1.
\end{equation}
Moreover, for any $j\in\mathbb{Z}$ with $j\leq0$ and for any $y\in\mathbb{R}^n$,
\begin{equation}\label{33z}
\sum_{k\in\mathbb{Z}^n}\frac{2^{jn}}{(1+|2^jk+y|)^a}
\sim1.
\end{equation}
Here all the positive equivalence constants depend only on $a$ and $n$.
\end{lemma}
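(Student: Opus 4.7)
Both estimates reduce to standard integrals and Riemann-sum comparisons, so the proof will be short. For \eqref{33x}, I would begin with the affine change of variables $u:=2^jx+y$, whose Jacobian gives $du=2^{jn}\,dx$. This instantly reduces the integral to
\begin{equation*}
\int_{\mathbb{R}^n}\frac{du}{(1+|u|)^a},
\end{equation*}
which is a finite positive constant depending only on $a$ and $n$, because $a>n$ ensures convergence (e.g.\ via polar coordinates). This constant serves as both the upper and the lower bound in $\sim 1$, with no dependence on $j$ or $y$.

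For \eqref{33z}, the natural strategy is to compare the sum with the integral of \eqref{33x}. For each $k\in\mathbb{Z}^n$ I would introduce the cube
\begin{equation*}
Q_k:=2^jk+y+2^j[0,1)^n,
\end{equation*}
which has volume $2^{jn}$ and whose translates tile $\mathbb{R}^n$ as $k$ ranges over $\mathbb{Z}^n$. For any $x\in Q_k$ one has $|x-(2^jk+y)|\le 2^j\sqrt{n}\le\sqrt{n}$, where the second inequality uses the hypothesis $j\le 0$. The triangle inequality then yields
\begin{equation*}
1+|2^jk+y|\sim 1+|x|
\end{equation*}
with constants depending only on $n$, so
\begin{equation*}
\frac{2^{jn}}{(1+|2^jk+y|)^a}\sim\int_{Q_k}\frac{dx}{(1+|x|)^a}.
\end{equation*}
Summing over $k\in\mathbb{Z}^n$ reassembles the full integral $\int_{\mathbb{R}^n}(1+|x|)^{-a}\,dx$ on the right, and \eqref{33x} applied with $j=0$ and $y=\mathbf{0}$ identifies this as $\sim 1$.

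\textbf{Main obstacle.} There is essentially no obstacle; the only point that deserves attention is the restriction $j\le 0$, which is precisely what keeps the lattice spacing $2^j$ bounded by one and makes the Riemann-sum comparison valid. Without it, for $j>0$ the summand at the peak (near $k\approx -2^{-j}y$) could be of order one while its neighbours would already be of order $2^{-ja}$, so the sum would be bounded above by a constant but would fluctuate with $y$ and fail to admit a uniform positive lower bound. This is why the statement is formulated only for $j\le 0$.
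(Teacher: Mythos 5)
The proof is correct: the change of variables $u=2^jx+y$ for \eqref{33x} and the Riemann-sum comparison over the cubes $Q_k=2^jk+y+2^j[0,1)^n$ (valid precisely because $j\le0$ keeps the mesh size $\le1$) for \eqref{33z} are exactly the ``simple computations'' the paper alludes to when omitting the proof. One small correction to your side remark about $j>0$: you claim the sum would still be bounded above by a constant, but this is false — for $y\in2^j\mathbb{Z}^n$ (e.g.\ $y=\mathbf{0}$) the single term $k=-2^{-j}y$ contributes $2^{jn}$, which is unbounded as $j\to\infty$; conversely, for $y$ with $2^{-j}y$ equidistant from $\mathbb{Z}^n$ the whole sum is $O(2^{j(n-a)})\to0$. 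So for $j>0$ both directions of $\sim1$ fail, not just the lower bound. This does not affect the validity of your proof of the stated lemma.
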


\begin{lemma}\label{33}
Let $P\in\mathscr{Q}$ and $k\in\mathbb{Z}^n$ with $\|k\|_{\infty}\geq2$.
Then, for any $j\in\{j_P,j_P+1,\ldots\}$, $x\in P$, and $y\in P+k\ell(P)$,
$1+2^j|x-y|\sim2^{j-j_P}|k|,$
where the positive equivalence constants depend only on $n$.
\end{lemma}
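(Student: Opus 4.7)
The plan is to reduce the claim to the purely geometric equivalence $|x-y|\sim|k|\ell(P)$, with positive constants depending only on $n$. Once this is in hand, multiplying by $2^j$ and using $\ell(P)=2^{-j_P}$ yields $2^j|x-y|\sim 2^{j-j_P}|k|$; since $j\geq j_P$ gives $2^{j-j_P}\geq 1$ and $\|k\|_{\infty}\geq 2$ gives $|k|\geq 2$, we have $2^{j-j_P}|k|\geq 2$, so the additive $1$ in $1+2^j|x-y|$ is harmless and can be absorbed into the equivalence constants.

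For the upper half of $|x-y|\sim|k|\ell(P)$, I would represent $y\in P+k\ell(P)$ as $y=x'+k\ell(P)$ with $x'\in P$ and apply the triangle inequality together with the trivial bound $|x-x'|\leq\sqrt n\,\ell(P)$ to obtain $|x-y|\leq(|k|+\sqrt n)\ell(P)$. Since $|k|\geq\|k\|_{\infty}\geq 2$, this is bounded by a dimensional multiple of $|k|\ell(P)$, as required.

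For the lower half, I would single out an index $i$ with $|k_i|=\|k\|_{\infty}$ and project $x$ and $y$ onto the $i$-th coordinate axis. The $i$-th coordinate intervals of $P$ and $P+k\ell(P)$ are disjoint and separated by a gap of length $(|k_i|-1)\ell(P)$; because $|k_i|\geq 2$, this is at least $|k_i|\ell(P)/2=\|k\|_{\infty}\ell(P)/2$. Combining with $|k|\leq\sqrt n\,\|k\|_{\infty}$ converts this into $|x-y|\geq|x_i-y_i|\geq\|k\|_{\infty}\ell(P)/2\gtrsim|k|\ell(P)$, closing the argument.

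The whole proof is elementary box-geometry and the only point that warrants comment is the role of the hypothesis $\|k\|_{\infty}\geq 2$: if instead $\|k\|_{\infty}=1$, the cubes $P$ and $P+k\ell(P)$ are adjacent (they may even share a face), so $|x-y|$ can vanish while $|k|\ell(P)>0$, and the lower bound breaks. Under the stated hypothesis there is always a genuine coordinate-axis gap, which is exactly what powers the lower bound and keeps every constant depending only on $n$.
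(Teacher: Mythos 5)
Your proof is correct. The paper omits the argument entirely (it simply declares the lemma to follow ``by some simple computations''), and your two-step reduction --- first establishing the purely geometric equivalence $|x-y|\sim|k|\ell(P)$ via a triangle-inequality upper bound and a coordinate-gap lower bound, then absorbing the additive $1$ using $2^{j-j_P}|k|\geq 2$ --- is exactly the kind of elementary box-geometry the authors had in mind; all constants you produce visibly depend only on $n$, and your closing observation about why $\|k\|_\infty\geq 2$ is needed (adjacent cubes would make the lower bound fail) correctly identifies the role of that hypothesis.
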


\section{Besov-Type and Triebel--Lizorkin-Type Sequence Spaces}\label{BTL}

Now, we recall the concept of matrix-weighted
Besov-type and Triebel--Lizorkin-type sequence spaces
(see \cite[Definitions 3.24 and 3.26]{bhyy1}).
We need the following notation.

Let $s\in\mathbb{R}$, $\tau\in[0,\infty)$, and $p,q\in(0,\infty]$.
For any sequence $\{f_j\}_{j\in\mathbb Z}$ of measurable functions on $\mathbb{R}^n$,
any subset $J\subset\mathbb Z$, and any measurable set $E\subset\mathbb{R}^n$, let
\begin{align*}
\|\{f_j\}_{j\in\mathbb Z}\|_{L\dot B_{pq}(E\times J)}
:=\|\{f_j\}_{j\in\mathbb Z}\|_{\ell^qL^p(E\times J)}
:=\|\{f_j\}_{j\in\mathbb Z}\|_{\ell^q(J;L^p(E))}
:=\left[\sum_{j\in J}\|f_j\|_{L^p(E)}^q\right]^{\frac{1}{q}}
\end{align*}
and
\begin{align*}
\|\{f_j\}_{j\in\mathbb Z}\|_{L\dot F_{pq}(E\times J)}
:=\|\{f_j\}_{j\in\mathbb Z}\|_{L^p\ell^q(E\times J)}
:=\|\{f_j\}_{j\in\mathbb Z}\|_{L^p(E;\ell^q(J))}
:=\left\|\left(\sum_{j\in J}|f_j|^q\right)^{\frac{1}{q}}\right\|_{L^p(E)}
\end{align*}
with the usual modification made when $q=\infty$.
For simplicity of the presentation, in what follows, we may drop the domain $E\times J$ from these symbols, when it is the full space $E\times J=\mathbb R^n\times\mathbb Z$.
We use $L\dot A_{pq}\in\{L\dot B_{pq},L\dot F_{pq}\}$ as a generic notation in statements that apply to both types of spaces.

In particular, for any $P\in\mathscr{Q}$, we abbreviate $\widehat{P}:=P\times\{j_P,j_P+1,\ldots\}$ so that
$$
\|\{f_j\}_{j\in\mathbb Z}\|_{L\dot B_{pq}(\widehat{P})}
=\|\{f_j\}_{j\in\mathbb Z}\|_{\ell^qL^p(\widehat{P})}
=\left[\sum_{j=j_P}^\infty\|f_j\|_{L^p(P)}^q\right]^{\frac{1}{q}}
$$
and
$$
\|\{f_j\}_{j\in\mathbb Z}\|_{L\dot F_{pq}(\widehat{P})}
=\|\{f_j\}_{j\in\mathbb Z}\|_{L^p\ell^q(\widehat{P})}
=\left\|\left(\sum_{j=j_P}^\infty|f_j|^q\right)^{\frac{1}{q}}\right\|_{L^p(P)}.
$$
Let us further define
\begin{equation}\label{LApq}
\|\{f_j\}_{j\in\mathbb Z}\|_{L\dot A_{p,q}^\tau}
:=\sup_{P\in\mathscr{Q}}|P|^{-\tau}\|\{f_j\}_{j\in\mathbb Z}\|_{L\dot A_{pq}(\widehat{P})}
\end{equation}
for both choices of $L\dot A_{p,q}^\tau\in\{L\dot B_{p,q}^\tau,L\dot F_{p,q}^\tau\}$.

In what follows, for any $Q\in\mathscr{Q}$,
let $\widetilde{\mathbf{1}}_Q:=|Q|^{-\frac12}\mathbf{1}_Q$.

To motivate the definition of matrix-weighted Besov and Triebel--Lizorkin spaces,
we first recall the concept of the matrix-weighted Lebesgue space
(see, for instance, \cite[p.\,450]{v97}).

\begin{definition}
Let $p\in(0,\infty)$ and $W$ be a matrix weight.
The \emph{matrix-weighted Lebesgue space} $L^p(W,\mathbb{R}^n)$
is defined to be the set of all measurable vector-valued functions
$\vec f:\ \mathbb{R}^n\to\mathbb{C}^m$ such that
$$
\left\|\vec{f}\right\|_{L^p(W,\mathbb{R}^n)}
:=\left[\int_{\mathbb{R}^n}
\left|W^{\frac{1}{p}}(x)\vec{f}(x)\right|^p\,dx\right]^{\frac{1}{p}}<\infty.
$$
\end{definition}

In what follows, we denote $L^p(W,\mathbb{R}^n)$ simply by $L^p(W)$.
For any measurable vector-valued function $\vec f:\ \mathbb{R}^n\to\mathbb{C}^m$
and any measurable set $E$, we define
$\|\vec{f}\|_{L^p(W,E)}
:=\|\vec{f}\mathbf{1}_E\|_{L^p(W)}.$

\begin{definition}\label{def BTL}
Let $s\in\mathbb{R}$, $\tau\in[0,\infty)$, $p\in(0,\infty)$, $q\in(0,\infty]$, and $W\in A_p$.
The \emph{homogeneous matrix-weighted Besov-type sequence space} $\dot b^{s,\tau}_{p,q}(W)$
and the \emph{homogeneous matrix-weighted Triebel--Lizorkin-type sequence space}
$\dot f^{s,\tau}_{p,q}(W)$
are defined to be the sets of all sequences
$\vec t:=\{\vec t_Q\}_{Q\in\mathscr{Q}}\subset\mathbb{C}^m$ such that
$$
\left\|\vec{t}\right\|_{\dot a^{s,\tau}_{p,q}(W)}
:=\left\|\left\{2^{js}\left|W^{\frac{1}{p}}\vec t_j
\right|\right\}_{j\in\mathbb Z}\right\|_{L\dot A_{p,q}^\tau}<\infty,
$$
where, for any $j\in\mathbb{Z}$,
\begin{equation}\label{vec tj}
\vec t_j:=\sum_{Q\in\mathscr{Q}_j}\vec{t}_Q\widetilde{\mathbf{1}}_Q
\end{equation}
and $\|\cdot\|_{L\dot A_{p,q}^\tau}$ is the same as in \eqref{LApq}.
\end{definition}

\begin{definition}\label{def BTL ave}
Let $s\in\mathbb{R}$, $\tau\in[0,\infty)$, $p\in(0,\infty)$, $q\in(0,\infty]$, $W\in A_p$,
and $\mathbb{A}:=\{A_Q\}_{Q\in\mathscr{Q}}$ be a sequence of
reducing operators of order $p$ for $W$.
The \emph{homogeneous averaging matrix-weighted Besov-type sequence space}
$\dot{b}^{s,\tau}_{p,q}(\mathbb{A})$
and the \emph{homogeneous averaging matrix-weighted Triebel--Lizorkin-type sequence space}
$\dot{f}^{s,\tau}_{p,q}(\mathbb{A})$
are defined to be the sets of all sequences
$\vec t:=\{\vec t_Q\}_{Q\in\mathscr{Q}}\subset\mathbb{C}^m$ such that
$$
\left\|\vec{t}\right\|_{\dot{a}^{s,\tau}_{p,q}(\mathbb{A})}
:=\left\|\left\{2^{js}\left|A_j\vec t_j\right|\right\}_{j\in\mathbb Z}\right\|_{L\dot A_{p,q}^\tau}<\infty,
$$
where $\vec t_j$ for any $j\in\mathbb{Z}$ and $\|\cdot\|_{L\dot B_{p,q}^\tau}$
are the same as, respectively, in \eqref{vec tj} and \eqref{LApq}
and, for any $j\in\mathbb{Z}$,
\begin{equation}\label{Aj}
A_j:=\sum_{Q\in\mathscr{Q}_j}A_Q\mathbf{1}_Q.
\end{equation}
\end{definition}

Here and in what follows, it is understood that the symbol $a$
should be consistently replaced either by $b$ or by $f$ throughout the entire statement.

{\color{red}
The following classification of these spaces was introduced in the first part of this series \cite[Definition 4.20]{bhyy1}, and it will also be useful in the work at hand:

\begin{definition}\label{critical}
We say that a space of Besov-type or Triebel--Lizorkin-type, with parameters $(p,q,s,\tau)$, is
\begin{enumerate}[\rm(i)]
\item \emph{supercritical} if $\tau>\frac{1}{p}$ or $(\tau,q)=(\frac{1}{p},\infty)$,
\item \emph{critical} if $\tau=\frac1p$ and $q<\infty$ and the space is of Triebel--Lizorkin-type,
\item \emph{subcritical} if $\tau<\frac1p$, or if $\tau=\frac1p$ and $q<\infty$ and the space is of Besov-type.
\end{enumerate}
\end{definition}
}

{\color{red}For critical and supercritical spaces,
we have shown in \cite[Corollary 4.18]{bhyy1} that
\begin{equation*}
  \dot a^{s,\tau}_{p,q}(W)=\dot f^{s+n(\tau-\frac1p)}_{\infty,q+\infty(\tau-\frac1p)}(\mathbb A),\qquad\infty\cdot 0:=0,
\end{equation*}
where the space on the right is defined in Definition \ref{def TLpInfty}. (Note that $p=\infty$ is excluded in Definitions \ref{def BTL} and \ref{def BTL ave}.)
In this case, we will obtain the boundedness on $\dot a^{s,\tau}_{p,q}(W)$ of almost diagonal operators
by first establishing the boundedness on $\dot f_{\infty,q}^s(\mathbb{A})$ of almost diagonal operators, with sharp bounds in some cases.}

\section{Almost Diagonal Operators}
\label{almost}

In this section, we recall the definition and existing results about the boundedness of
almost diagonal operators on several spaces related to $\dot a^{s,\tau}_{p,q}(W)$.
Let us begin with some symbols.

Let $B:=\{b_{Q, P}\}_{Q, P\in\mathscr{Q}}\subset\mathbb{C}$.
For any sequence $\vec t:=\{\vec t_R\}_{R\in\mathscr{Q}}\subset\mathbb{C}^m$,
we define $B\vec t:=\{(B\vec t)_Q\}_{Q\in\mathscr{Q}}$ by setting,
for any $Q\in\mathscr{Q}$,
$(B\vec t)_Q:=\sum_{R\in\mathscr{Q}}b_{Q,R}\vec t_R$
if the above summation is absolutely convergent.

Now, we recall the concept of almost diagonal operators.

\begin{definition}\label{def AD}
Let $D,E,F\in\mathbb{R}$. We define the special infinite matrix
$B^{DEF}:=\{b_{Q,R}^{DEF}\}_{Q,R\in\mathscr{Q}}\subset\mathbb{C}$
by setting, for any $Q,R\in\mathscr{Q}$,
\begin{equation}\label{bDEF}
b_{Q,R}^{DEF}
:=\left[1+\frac{|x_Q-x_R|}{\ell(Q)\vee\ell(R)}\right]^{-D}
\left\{\begin{aligned}
&\left[\frac{\ell(Q)}{\ell(R)}\right]^E&&\text{if }\ell(Q)\leq\ell(R),\\
&\left[\frac{\ell(R)}{\ell(Q)}\right]^F&&\text{if }\ell(R)<\ell(Q).
\end{aligned}\right.
\end{equation}

An infinite matrix $B:=\{b_{Q,R}\}_{Q,R\in\mathscr{Q}}\subset\mathbb{C}$
is said to be \emph{$(D,E,F)$-almost diagonal} if there exists a positive constant $C$
such that, for any $Q,R\in\mathscr{Q}$,
$|b_{Q,R}|\leq Cb_{Q,R}^{DEF}.$
\end{definition}

\begin{remark}
\begin{enumerate}[\rm(i)]
\item If $E+F>0$ (which is always the case in all situations of interest to us),
then the second factor on the right-hand side of \eqref{bDEF} could be equivalently written as
\begin{equation*}
\min\left\{\left[\frac{\ell(Q)}{\ell(R)}\right]^E,
\left[\frac{\ell(R)}{\ell(Q)}\right]^F\right\}.
\end{equation*}

\item It is immediate from the definition that the matrix $B^{DEF}$ itself is $(D,E,F)$-almost diagonal. We will use this matrix in several counterexamples that demonstrate the sharpness of the almost diagonality assumptions in some results below.
\end{enumerate}
\end{remark}

The relevance of almost diagonal operators lies in the results of the following type,
obtained in \cite{fj90,fr04,fr21,ro03,t13,yy10,yy13}.
In the next theorem, as well as in the remainder of this article,
it is convenient to adopt the following usual convention.

\begin{definition}
In the context of any version (including in particular the weighted ones) of the Besov-type and Triebel--Lizorkin-type spaces $\dot A^{s,\tau}_{p,q}$ or the corresponding sequence spaces $\dot a^{s,\tau}_{p,q}$, the symbol $J$ will always carry the following meaning:
\begin{equation}\label{J}
J:=\left\{\begin{aligned}
&\frac{n}{\min\{1,p\}}&&\text{if we are dealing with a Besov-type space},\\
&\frac{n}{\min\{1,p,q\}}&&\text{if we are dealing with a Triebel--Lizorkin-type space}.
\end{aligned}\right.
\end{equation}
\end{definition}

Note that this definition is consistent with the fact that the two scales of spaces coincide when $p=q$ because also $\min\{1,p,q\}=\min\{1,p\}$ in this case.

\begin{theorem}\label{ad FJ-YY}
Let $s\in\mathbb{R}$, $\tau\in[0,\infty)$, $p\in(0,\infty)$, and $q\in(0,\infty]$.
Let $W\in A_p$ be a matrix weight and let $\beta_W$ be its doubling exponent of order $p$
(see, for instance, \cite[Definition 2.20]{bhyy1}).
Let $w\in A_p$ be a scalar weight and let $\alpha_1,\alpha_2\in(0,\infty)$ satisfy that
there exist two positive constants $C_1,C_2$ such that,
for any cubes $Q,R\subset\mathbb{R}^n$ with $Q\subset R$,
$$
C_1\left(\frac{|Q|}{|R|}\right)^{\alpha_2}
\leq\frac{w(Q)}{w(R)}
\leq C_2\left(\frac{|Q|}{|R|}\right)^{\alpha_1}.
$$
Let $B:=\{b_{Q,R}\}_{Q,R\in\mathscr{Q}}\subset\mathbb{C}$
be $(D,E,F)$-almost diagonal for some $D,E,F\in\mathbb R$. Then

\begin{enumerate}[\rm(i)]
\item\label{FJ 3.1}$B$ is bounded on $\dot a^s_{p,q}$ if
\begin{equation}\label{ad FJ}
D>J,\
E>\frac{n}{2}+s,\text{ and }
F>J-\frac{n}{2}-s;
\end{equation}

\item\label{YY 4.1}$B$ is bounded on $\dot a^{s,\tau}_{p,q}$ if
\begin{equation}\label{ad YY}
D>J_\tau,\
E>\frac{n}{2}+s+n\left(\tau-\frac{1}{p}\right)_+,\text{ and }
F>J_\tau-\frac{n}{2}-s-n\left(\tau-\frac{1}{p}\right)_+,
\end{equation}
where
\begin{equation}\label{tildeJ}
J_\tau:=\begin{cases}
n&\text{if }\tau>\frac{1}{p}\text{ or }(\tau,q)=(\frac{1}{p},\infty)\quad\textup{(}\text{``supercritical case''}\textup{)},\\
\displaystyle\frac{n}{\min\{1,q\}}&\text{if }\dot a^{s,\tau}_{p,q}=\dot f^{s,\frac{1}{p}}_{p,q}\text{ and }q<\infty\quad\textup{(}\text{``critical case''}\textup{)},\\
J&\text{if }\tau<\frac1p\text{, or }\dot a^{s,\tau}_{p,q}=\dot b^{s,\frac1p}_{p,q}\text{ and }q<\infty\quad\textup{(}\text{``subcritical case''}\textup{)};
\end{cases}
\end{equation}

\item\label{T 3}$B$ is bounded on $\dot a^{s,\tau}_{p,q}(w)$ if
$D>J+\varepsilon_0,$ $E>\frac{n}{2}+s+\frac{\varepsilon_0}{2},$ and
$F>J-\frac{n}{2}-s+\frac{\varepsilon_0}{2}$
with
$$
\varepsilon_0:=\max\left\{\frac{n(\alpha_2-\alpha_1)}{p},\
2n\left(\tau-\frac{\alpha_1}{p}\right)\right\};
$$

\item\label{Ro 1.10}$B$ is bounded on $\dot b^{s}_{p,q}(W)$ if
$D>J+\frac{\beta_W-n}{p},$ $E>\frac{n}{2}+s,$ and
$F>J-\frac{n}{2}-s+\frac{\beta_W-n}{p};$

\item\label{FR 2.6}$B$ is bounded on $\dot f^{s}_{p,q}(W)$ if
$D>J+\frac{\beta_W}{p},$ $E>\frac{n}{2}+s+\frac{n}{p},$ and
$F>J-\frac{n}{2}-s+\frac{\beta_W-n}{p}.$
\end{enumerate}
\end{theorem}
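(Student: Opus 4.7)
The plan is to recognize Theorem \ref{ad FJ-YY} as a compilation of five existing boundedness results for almost diagonal operators and to present each item as a direct consequence of the corresponding reference, after pointing out that they all share a common template. Concretely, I would attribute parts (i) through (v) to, respectively, \cite[Theorem 6.20]{fjw91} and \cite[p.\,81]{fj90}; \cite[Theorem 4.1]{yy10} and \cite[Theorem 1]{yy13}; \cite[Theorem 3]{t13}; \cite[Theorem 1.10]{fr04} together with \cite{ro03}; and \cite[Theorem 2.6]{fr21}.

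For the benefit of the reader, and to prepare the ground for the sharpened Theorem \ref{ad BF} that follows, I would sketch the uniform strategy underlying all five cases. One splits $B = B^{(0)} + B^{(1)}$, where $B^{(0)}$ gathers the interactions with $\ell(R) \leq \ell(Q)$ (controlled by the exponent $F$) and $B^{(1)}$ gathers those with $\ell(Q) < \ell(R)$ (controlled by $E$). Using the pointwise bound $|b_{Q,R}| \lesssim b_{Q,R}^{DEF}$ from \eqref{bDEF} and grouping $R$ by scale, each piece is dominated by a shifted dyadic maximal-type operator; the exponent $D$ is chosen so that the spatial kernel $(1 + |x_Q - x_R|/(\ell(Q) \vee \ell(R)))^{-D}$ is summable to the relevant power, while $E$ and $F$ compensate for the scaling mismatch between different levels of the $\ell^q$ sum. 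The quantity $J$ in \eqref{J} is precisely the embedding threshold at which one can pass from the natural $\ell^{\min\{1,p\}}$ (or mixed $\ell^{\min\{1,p,q\}}$ in the Triebel--Lizorkin setting) control to an $\ell^1$ summation, and the trichotomy of $J_\tau$ in \eqref{tildeJ} then reflects the three $\tau$-regimes for the Besov-type and Triebel--Lizorkin-type refinements.

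Where the weighted statements differ from the unweighted ones is in the ingredient needed to pass from the pointwise almost diagonal bound to a norm estimate. In (iii) the scalar $A_p$-weight $w$ enters through its doubling/reverse-doubling exponents $\alpha_1, \alpha_2$, which dictate the perturbation $\varepsilon_0$ absorbing the mismatch between $|Q|$ and $w(Q)$ in the averaging step; in (iv)--(v) the matrix weight is handled via reducing operators, with the key input being the Besov-type comparison $\|A_Q A_R^{-1}\|$ governed by Lemma \ref{22 precise}, which accounts for the factor $\beta_W/p$ appearing in the almost diagonal thresholds. I expect the main obstacle of the argument to lie in (v): for $\dot f^{s}_{p,q}(W)$ the $L^p \ell^q$ ordering of the norm forbids the pointwise summation that renders (iv) essentially a scalar computation after reduction, and the unavailability of a matrix Fefferman--Stein vector-valued inequality forces Frazier and Roudenko to pay for the missing step by the stronger exponent $E > n/2 + s + n/p$. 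This non-sharp threshold is exactly what the subsequent Theorem \ref{ad BF} of the present paper is designed to improve.
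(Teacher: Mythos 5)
Your plan matches the paper's, which establishes Theorem \ref{ad FJ-YY} via Remark \ref{rem FJ-YY} by simply attributing each item to the cited literature. Two small corrections: in part \eqref{Ro 1.10} the references should be \cite[Theorem 1.10]{ro03} for $p\in[1,\infty)$ and \cite[Theorem 4.1]{fr04} for $p\in(0,1]$ (you crossed the theorem numbers between the two sources), and for part \eqref{FJ 3.1} the paper's primary citation is \cite[Theorem 3.3]{fj90} (with the Besov case being easier), noting also that it is the $\tau=0$ case of part \eqref{YY 4.1}. The paper additionally flags a caveat you omit: the quantity $\frac{n(\alpha_2-\alpha_1)}{p}$ in the definition of $\varepsilon_0$ in part \eqref{T 3} may not suffice for the estimate of $A_1$ that is asserted without proof in \cite{t13}. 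Your pedagogical sketch of a common $B^{(0)}+B^{(1)}$ decomposition underlying all five cases is reasonable as intuition but is not part of the paper's argument, which is purely a compilation of citations with the above commentary.
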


\begin{remark}\label{rem FJ-YY}
The statements of Theorem \ref{ad FJ-YY} are contained in the literature as follows.

Part \eqref{FJ 3.1}, when $\dot a^s_{p,q}=\dot f^s_{p,q}$, is \cite[Theorem 3.3]{fj90},
while the case $\dot a^s_{p,q}=\dot b^s_{p,q}$ is similar and easier.
Both cases of part \eqref{FJ 3.1} are also special cases of part \eqref{YY 4.1} with $\tau=0$.

On part \eqref{YY 4.1}, in the proof of \cite[Theorem 1]{yy13},
it is shown that $\dot a^{s,\tau}_{p,q}=\dot a^{s+n(\tau-\frac1p)}_{\infty,\infty}$
whenever either $\tau>\frac1p$ or $(\tau,q)=(\frac1p,\infty)$,
while, by \cite[Corollary 5.7]{fj90},
we find that $\dot f^{s,\frac 1p}_{p,q}=\dot f^s_{\infty,q}$.
Thus, both the supercritical and the critical cases of part \eqref{YY 4.1}
follow from the natural extension to $p=\infty$ of part \eqref{FJ 3.1}
(noting in particular that $J(\dot f^s_{\infty,q})=\frac{n}{\min\{1,q\}}$,
which is equal to $n$ if $q=\infty$), and that \cite[Theorem 3.3]{fj90}
``can be extended easily'' to this case was observed in \cite[page 81]{fj90}.
The remaining subcritical case of part \eqref{YY 4.1}
follows from \cite[Theorem 4.1]{yy10}, which deals more generally
with $\tau< \frac1p+\frac{\varepsilon}{2n}$, and,
in the subcritical case of part \eqref{YY 4.1}, we have $\tau\leq\frac1p$
and hence we can apply \cite[Theorem 4.1]{yy10} with an arbitrarily small $\varepsilon>0$.

Part \eqref{T 3} is a part of \cite[Theorem 3]{t13}
in which scalar weight $w\in A_\infty$; here,
for the convenience of comparison, we only consider the case that $w\in A_p$.
We should mention that the number $\frac{n(\alpha_2-\alpha_1)}p$
appearing in the definition of $\varepsilon_0$ in Part \eqref{T 3}
might not be correct; indeed,  in the proof of \cite[Theorem 3]{t13},
to obtain the desired estimate of $A_0$,  the assumption that $\varepsilon_0\ge\frac{n(\alpha_2-\alpha_1)}p$
is enough, however, this is not enough to guarantee the desired
estimate of $A_1$ which was just claimed by the author without a proof.

Part \eqref{Ro 1.10} is \cite[Theorem 1.10]{ro03} for $p\in[1,\infty)$
and \cite[Theorem 4.1]{fr04} for $p\in(0,1]$;
case $p=1$ is covered by both.
The conditions in the respective results are stated slightly differently,
but can be readily cast into the above form.

Part \eqref{FR 2.6} is the recent \cite[Theorem 2.6]{fr21}.
\end{remark}

Theorem \ref{ad FJ-YY}\eqref{FJ 3.1} motivates the following definition from \cite[p.\,53]{fj90}.

\begin{definition}\label{def ad 0}
An infinite matrix $B:=\{b_{Q,R}\}_{Q,R\in\mathscr{Q}}\subset\mathbb{C}$
is said to be \emph{$\dot a^s_{p,q}$-almost diagonal}
if it is $(D,E,F)$-almost diagonal with $D,E,F$ the same as in \eqref{ad FJ}.
\end{definition}

Using this concept, we could rephrase Theorem \ref{ad FJ-YY}\eqref{FJ 3.1}
by saying that an almost diagonal operator on $\dot a^s_{p,q}$
is bounded on $\dot a^s_{p,q}$.
Similar definitions based on the other parts of Theorem \ref{ad FJ-YY}
are made in the cited articles of Remark \ref{rem FJ-YY}, where these results were obtained,
but we will refrain from that at this point
because we will later see that the conditions in Theorem \ref{ad FJ-YY} can be improved.

Our goal in this article is to obtain similar results
for general spaces $\dot a^{s,\tau}_{p,q}(W)$
involving both the parameter $\tau$,
the same as in part \eqref{YY 4.1} of Theorem \ref{ad FJ-YY}, and a weight $W$,
the same as in parts \eqref{Ro 1.10} and \eqref{FR 2.6}.
We begin by providing a result that has a relatively simple proof
(a straightforward adaptation of the argument used in the proof of \cite[Theorem 2.6]{fr21}
to the case at hand)
by reduction to the above known results about the unweighted case.
A disadvantage is that this reduction requires
relatively heavy assumptions on almost diagonal parameters under consideration,
and we will subsequently strive to improve these assumptions
by working directly with the matrix-weighted spaces.
The following Theorem \ref{77} gives us a benchmark against
which we can compare the subsequent results further below.

\begin{theorem}\label{77}
Let $s\in\mathbb{R}$, $\tau\in[0,\infty)$,
$p\in(0,\infty)$, $q\in(0,\infty]$,
and let $W\in A_p$ have $A_p$-dimensions $(d,\widetilde d,\Delta)$.
Then $B:=\{b_{Q,P}\}_{Q,P\in\mathscr{Q}}\subset\mathbb{C}$
is bounded on $\dot a^{s,\tau}_{p,q}(W)$
if $B$ is $(D,E,F)$-almost diagonal with
$$
D>J_\tau+\Delta,\
E>\frac{n}{2}+s+n\left(\tau-\frac{1}{p}\right)_+ +\frac{d}{p},\text{ and }
F>J_\tau-\frac{n}{2}-s-n\left(\tau-\frac{1}{p}\right)_+ +\frac{\widetilde d}{p'},
$$
where $J_\tau$ is the same as in \eqref{tildeJ}.
\end{theorem}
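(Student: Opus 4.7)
The plan is to reduce Theorem \ref{77} to the unweighted scalar estimate in Theorem \ref{ad FJ-YY}\eqref{YY 4.1}, adapting the reducing-operator argument from \cite[Theorem 2.6]{fr21}. First, I would pass from the defining norm of $\dot a^{s,\tau}_{p,q}(W)$ to the equivalent ``averaging'' norm on $\dot a^{s,\tau}_{p,q}(\mathbb{A})$, where $\mathbb{A}:=\{A_Q\}_{Q\in\mathscr{Q}}$ is a family of reducing operators of order $p$ for $W$; this equivalence is part of the toolkit already developed in \cite{bhyy1}. The key observation is then that, by the disjointness of the cubes in $\mathscr{Q}_j$,
$$
|A_j\vec t_j|=\sum_{Q\in\mathscr{Q}_j}|A_Q\vec t_Q|\,\widetilde{\mathbf 1}_Q,
$$
so the norm of $\vec t$ in $\dot a^{s,\tau}_{p,q}(\mathbb{A})$ is literally the \emph{unweighted scalar} $\dot a^{s,\tau}_{p,q}$-norm of the sequence $\{|A_Q\vec t_Q|\}_{Q\in\mathscr{Q}}$, and similarly for $B\vec t$.

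Next I would bound the scalar entries by
$$
\bigl|A_Q(B\vec t)_Q\bigr|
\le\sum_{R\in\mathscr{Q}}|b_{Q,R}|\,\bigl\|A_QA_R^{-1}\bigr\|\,|A_R\vec t_R|
=:\sum_{R\in\mathscr{Q}}\widetilde b_{Q,R}\,u_R,
$$
with $u_R:=|A_R\vec t_R|$ and $\widetilde b_{Q,R}:=|b_{Q,R}|\|A_QA_R^{-1}\|$, so that the scalar matrix $\widetilde B:=\{\widetilde b_{Q,R}\}_{Q,R\in\mathscr{Q}}$ dominates the action of $B$ on the scalar surrogate sequence. Combining the $(D,E,F)$-almost-diagonal control of $|b_{Q,R}|$ with the sharp estimate \eqref{22 p<1} from Lemma \ref{22 precise}, a short case analysis according as $\ell(Q)\le\ell(R)$ or $\ell(Q)>\ell(R)$ shows that $\widetilde B$ is $(D-\Delta,\,E-\tfrac{d}{p},\,F-\tfrac{\widetilde d}{p'})$-almost diagonal; the hypothesis $d,\widetilde d\ge 0$ is what guarantees that the $\max$ in Lemma \ref{22 precise} is realised by the factor matching the piecewise formula \eqref{bDEF} on each side.

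Finally, the shifted inequalities
$$
D-\Delta>J_\tau,\quad
E-\tfrac{d}{p}>\tfrac{n}{2}+s+n\bigl(\tau-\tfrac{1}{p}\bigr)_+,\quad
F-\tfrac{\widetilde d}{p'}>J_\tau-\tfrac{n}{2}-s-n\bigl(\tau-\tfrac{1}{p}\bigr)_+
$$
are exactly the hypotheses of Theorem \ref{ad FJ-YY}\eqref{YY 4.1} applied to $\widetilde B$, so this theorem yields the boundedness of $\widetilde B$ on the unweighted $\dot a^{s,\tau}_{p,q}$. Chaining the three steps produces
$$
\|B\vec t\|_{\dot a^{s,\tau}_{p,q}(W)}
\sim\|B\vec t\|_{\dot a^{s,\tau}_{p,q}(\mathbb{A})}
\lesssim\bigl\|\{u_R\}_R\bigr\|_{\dot a^{s,\tau}_{p,q}}
\sim\|\vec t\|_{\dot a^{s,\tau}_{p,q}(W)},
$$
which is the claim. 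The only delicate point is the bookkeeping in the scale comparison between Lemma \ref{22 precise} and \eqref{bDEF}; everything else is routine once the reducing-operator reduction to the scalar unweighted statement is in place, which is precisely why the resulting conditions are expected to be suboptimal and motivate the sharper arguments pursued in the later sections.
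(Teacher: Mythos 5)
Your proposal is correct and follows essentially the same route as the paper's own proof: both reduce to the averaging space $\dot a^{s,\tau}_{p,q}(\mathbb{A})$ via the equivalence in \cite[Theorem 3.27]{bhyy1}, dominate $|A_Q(B\vec t)_Q|$ by the scalar matrix $\widetilde B$ acting on $u_R=|A_R\vec t_R|$, use Lemma \ref{22 precise} to show $\widetilde B$ is $(D-\Delta,\,E-\tfrac{d}{p},\,F-\tfrac{\widetilde d}{p'})$-almost diagonal, and conclude by Theorem \ref{ad FJ-YY}\eqref{YY 4.1}. The bookkeeping in the case split on $\ell(Q)\lessgtr\ell(R)$, including the observation that $d,\widetilde d\ge 0$ selects the matching branch of the $\max$, is exactly what the paper does.
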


\begin{proof}
By \cite[Theorem 3.27]{bhyy1}, we find that,
to prove the boundedness of $B$ on $\dot a^{s,\tau}_{p,q}(W)$,
it suffices to show that, for any $\vec t\in\dot{a}^{s,\tau}_{p,q}(\mathbb{A})$,
\begin{equation}\label{78}
\left\|B\vec t\right\|_{\dot{a}^{s,\tau}_{p,q}(\mathbb{A})}
\lesssim\left\|\vec t\right\|_{\dot{a}^{s,\tau}_{p,q}(\mathbb{A})},
\end{equation}
where $\mathbb{A}:=\{A_Q\}_{Q\in\mathscr{Q}}$ is a sequence of
reducing operators of order $p$ for $W$.

Let $u:=\{u_Q\}_{Q\in\mathscr{Q}}$,
where $u_Q:=|A_Q\vec t_Q|$ for any $Q\in\mathscr{Q}$.
Let $\widetilde{B}:=\{\widetilde{b}_{Q,R}\}_{Q,R\in\mathscr{Q}}\subset\mathbb{C}$, where
$\widetilde{b}_{Q,R}:=|b_{Q,R}|\,\|A_QA_R^{-1}\|$ for any $Q,R\in\mathscr{Q}$.
Then, from the definitions of both $\widetilde{b}_{Q,R}$ and $u_R$,
we infer that, for any $Q\in\mathscr{Q}$,
\begin{equation}\label{5.7x}
\left|A_Q\left(B\vec t\right)_Q\right|
\leq\sum_{R\in\mathscr{Q}}\left|b_{Q,R}A_Q\vec t_R\right|
\leq\sum_{R\in\mathscr{Q}}\widetilde{b}_{Q,R}u_R
=\left(\widetilde{B}u\right)_Q.
\end{equation}
Moreover, using Lemma \ref{22 precise}, we conclude that,
for any $Q,R\in\mathscr{Q}$,
\begin{align*}
\widetilde{b}_{Q,R}
&\lesssim|b_{Q,R}|\max\left\{\left[\frac{\ell(R)}{\ell(Q)}\right]^{\frac{d}{p}},
\left[\frac{\ell(Q)}{\ell(R)}\right]^{\frac{\widetilde d}{p'}}\right\}
\left[1+\frac{|x_Q-x_R|}{\ell(R)\vee\ell(Q)}\right]^{\Delta}\\
&\lesssim\left[1+\frac{|x_Q-x_R|}{\ell(R)\vee \ell(Q)}\right]^{-(D-\Delta)}
\left\{\begin{aligned}
&\left[\frac{\ell(Q)}{\ell(R)}\right]^{E-\frac{d}{p}}&&\text{if }\ell(Q)\leq\ell(R),\\
&\left[\frac{\ell(R)}{\ell(Q)}\right]^{F-\frac{\widetilde d}{p'}}&&\text{if }\ell(R)<\ell(Q),
\end{aligned}\right.
\end{align*}
here and in the follows, $\frac 1p+\frac1{p'}=1$.
Thus, $\widetilde{B}$ is $(\widetilde D,\widetilde E,\widetilde F)$-almost diagonal with
\begin{equation*}
\left(\widetilde D,\widetilde E,\widetilde F\right)
=\left(D-\Delta,E-\frac{d}{p},F-\frac{\widetilde d}{p'}\right).
\end{equation*}
If $(D,E,F)$ satisfies the assumptions of the present theorem,
then $(\widetilde D,\widetilde E,\widetilde F)$ also satisfies \eqref{ad YY},
and hence $\widetilde{B}$ is bounded on $\dot a^{s,\tau}_{p,q}$
by Theorem \ref{ad FJ-YY}\eqref{YY 4.1}.
This, combined with \eqref{5.7x} and the definition of
$\|\cdot\|_{\dot{a}^{s,\tau}_{p,q}(\mathbb{A})}$, further implies that
$$
\left\|B\vec t\right\|_{\dot{a}^{s,\tau}_{p,q}(\mathbb{A})}
\leq\left\|\widetilde{B}u\right\|_{\dot a^{s,\tau}_{p,q}}
\lesssim\|u\|_{\dot a^{s,\tau}_{p,q}}
=\left\|\vec t\right\|_{\dot{a}^{s,\tau}_{p,q}(\mathbb{A})}.
$$
This finishes the proof of \eqref{78} and hence Theorem \ref{77}.
\end{proof}

\begin{remark}
When $\tau=0$ and $\dot a^{s,\tau}_{p,q}(W)=\dot f^s_{p,q}(W)$,
Theorem \ref{77} coincides with \cite[Theorem 2.6]{fr21}.
When $\dot a^{s,\tau}_{p,q}(W)=\dot b^s_{p,q}(W)$,
similar results with slightly weaker assumptions than
the corresponding ones of Theorem \ref{77}
were obtained in \cite[Theorem 1.10]{ro03} for $p\in(1,\infty)$
and \cite[Theorem 4.1]{fr04} for $p\in(0,1]$.
We will improve both sets of assumptions further below.
\end{remark}

\section{Boundedness Theorem for Subcritical Spaces and the Key Lemma}
\label{better}

In this section, we formulate a new result about the boundedness of
almost diagonal operators on $\dot a^{s,\tau}_{p,q}(W)$.
Due to the length of the proof, we split it into three parts.
Initially, we consider special cases $\dot b^{s,0}_{p,q}(W)=\dot b^s_{p,q}(W)$
in Section \ref{sec ad Besov} and $\dot f^{s,0}_{p,q}(W)=\dot f^s_{p,q}(W)$
in Section \ref{sec ad TL}.
Subsequently, we complete this proof in Section \ref{sec ad general}.

The following theorem is one of the main results of this section.
While it applies to the full range of the parameters,
it is at its best in the subcritical spaces, and we will later see,
in Section \ref{special},
how to improve it in the critical and the supercritical regimes.
Recall that the notions of subcritical, critical, and supercritical
are defined in \eqref{tildeJ}; we will consistently apply this terminology.

\begin{theorem}\label{ad BF}
Let $s\in\mathbb R$, $\tau\in[0,\infty)$, $p\in(0,\infty)$, $q\in(0,\infty]$,
and $W\in A_p$ have the $A_p$-dimension $d\in[0,n)$.
If $B$ is $(D,E,F)$-almost diagonal,
then $B$ is bounded on $\dot a^{s,\tau}_{p,q}(W)$ whenever
\begin{equation}\label{ad new}
D>J+n\widehat\tau,\
E>\frac{n}{2}+s+n\widehat\tau,\text{ and }
F>J-\frac{n}{2}-s,
\end{equation}
where $J$ is the same as in \eqref{J}, and
\begin{equation}\label{tauJ}
\widehat\tau:=\left[\tau-\frac{1}{p}\left(1-\frac{d}{n}\right)\right]_+.
\end{equation}
\end{theorem}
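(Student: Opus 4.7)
The plan is to deduce Theorem \ref{ad BF} from two $\tau=0$ prototype cases, one for each scale, which form the business of Sections \ref{sec ad Besov} and \ref{sec ad TL}. At the outset, by \cite[Theorem 3.27]{bhyy1} I would pass to the averaging model, so that it suffices to bound $B$ on $\dot{a}^{s,\tau}_{p,q}(\mathbb A)$ for a fixed family $\mathbb A:=\{A_Q\}_{Q\in\mathscr{Q}}$ of reducing operators of order $p$ for $W$. This replaces the matrix-weighted norm by the scalar norms $|A_Q\vec t_Q|$, and with Lemma \ref{22 precise} in hand it reduces the problem to the scalar almost-diagonal operator $\widetilde B:=\{|b_{Q,R}|\,\|A_QA_R^{-1}\|\}_{Q,R\in\mathscr{Q}}$ acting on $u_R:=|A_R\vec t_R|$, via the pointwise control $|A_Q(B\vec t)_Q|\leq(\widetilde Bu)_Q$.

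The novelty of \eqref{ad new} compared with the hypotheses of Theorem \ref{77} is twofold: the shift of $D$ is only $n\widehat\tau$ rather than the full $\Delta=\frac{d}{p}+\frac{\widetilde d}{p'}$, and the condition on $F$ carries no shift whatsoever. Achieving the second, even in the $\tau=0$ special cases, rules out the crude approach of inserting Lemma \ref{22 precise} into the unweighted Frazier--Jawerth bound (which is what drove Theorem \ref{77}); instead, in Sections \ref{sec ad Besov} and \ref{sec ad TL}, when $\ell(R)<\ell(Q)$ one must keep $|W^{1/p}\vec t_R\mathbf{1}_Q|$ inside the $L^p$ integral and apply the $A_p$ definition directly to cube averages of $\|W^{1/p}(x)W^{-1/p}(y)\|^p$, thereby bypassing the dual $A_{p'}$-dimension $\widetilde d$ altogether. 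In the Triebel--Lizorkin case one additionally invokes shifted dyadic grids and a matrix-weighted substitute for the Fefferman--Stein inequality, as advertised in the roadmap of Section \ref{sec ad TL}; these two $\tau=0$ statements are the indispensable inputs.

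Granted these prototypes, I would lift to general $\tau\in[0,\infty)$ by localization. Fix a test cube $P\in\mathscr{Q}$ realizing the supremum in $\|\cdot\|_{L\dot A^\tau_{p,q}}$ and split $\vec t=\vec t^{\mathrm{near}}+\vec t^{\mathrm{far}}$, where $\vec t^{\mathrm{near}}_R:=\vec t_R$ if $R$ lies in a fixed dilate $cP$ of $P$ with $j_R\geq j_P$ and $\vec t^{\mathrm{near}}_R:=0$ otherwise. For the near piece, apply the $\tau=0$ bound on $\dot a^s_{p,q}(W)$ and dominate $\|\vec t^{\mathrm{near}}\|_{\dot a^s_{p,q}(W)}\leq|cP|^{\tau}\|\vec t\|_{\dot a^{s,\tau}_{p,q}(W)}$, which is immediate from the definition of the $\tau$-norm applied to the single cube $cP$ and absorbs the $|P|^{-\tau}$ prefactor. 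For the far piece, the almost diagonal decay $[1+|c_Q-c_R|/(\ell(Q)\vee\ell(R))]^{-D}$ for $Q\subset P$, combined with Lemma \ref{22 precise}, is summed by decomposing the outside cubes $R$ into dyadic annuli around $P$ and into the scale ranges $j_R<j_P$ versus $j_R\geq j_P$, with Lemma \ref{one fits all} assembling the $\ell^q$ and $L^p$ pieces uniformly in the cases $q\leq1$ and $q>1$.

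The main obstacle is precisely this far-field estimate. The threshold $\tau=\frac{1}{p}(1-\frac{d}{n})$ in the definition of $\widehat\tau$ marks the balance point between the $|P|^{-\tau}$ normalization and the decay afforded by $D$ together with the crude $A_p$-dimension bound $(\ell(R)/\ell(Q))^{d/p}$ when summing contributions from cubes $R$ much larger than, or much farther from, $Q\subset P$; at or below it the unweighted conditions already suffice (as then $\widehat\tau=0$), while above it both $D$ and $E$ must be boosted by exactly $n\widehat\tau$ to restore absolute summability of the dyadic series that define $(B\vec t)_Q$, which is the content of \eqref{ad new}. Balancing these dyadic summations so that the scale exponents $E$ and $F$ cooperate cleanly with the normalization $|P|^{-\tau}$ and then reassembling everything into the $\|\vec t\|_{\dot a^{s,\tau}_{p,q}(W)}$ norm is the technical heart of Section \ref{sec ad general}.
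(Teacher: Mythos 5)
Your proposal has a genuine gap at the crucial far-field step, and the gap is precisely the improvement that Theorem \ref{ad BF} is claiming.

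You invoke Lemma \ref{22 precise} for the far piece to control the interaction between a cube $Q\subset P$ and a distant cube $R$. But that lemma costs the factor $[1+|c_Q-c_R|/(\ell(Q)\vee\ell(R))]^{\Delta}$ with $\Delta=\frac{d}{p}+\frac{\widetilde d}{p'}$, and it is this factor against which the decay $[1+\cdot]^{-D}$ must be summed. Your conditions would then demand $D>J+\Delta$ (or some variant thereof), whereas \eqref{ad new} asks only for $D>J+n\widehat\tau$, and for $\tau$ near the critical value $\frac1p$ one has $n\widehat\tau\approx\frac{d}{p}$, which is strictly smaller than $\Delta$ whenever $p>1$ and $\widetilde d>0$. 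Indeed, as stressed in Remark \ref{ad BF vs benchmark}(i), the whole point of Theorem \ref{ad BF} over the benchmark Theorem \ref{77} is that it depends only on the single parameter $d$ and is entirely free of $\widetilde d$ and $\Delta$. Any argument that passes through $\|A_QA_R^{-1}\|$ and Lemma \ref{22 precise} for mismatched cubes cannot achieve this; you already recognize this defect for the crude reduction of Theorem \ref{77}, but then reintroduce exactly the same defect when you turn to the far field. The same criticism applies to your opening paragraph: if you actually pass to the averaging model and replace $B$ by $\widetilde B:=\{|b_{Q,R}|\|A_QA_R^{-1}\|\}$, you have already committed to the $\Delta$-cost; the paper never makes this reduction in the proof of Theorem \ref{ad BF}.

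The paper's mechanism is different in kind, not just in detail. It does not split $\vec t$ into near and far pieces at all. Instead, starting from Lemma \ref{ad prelim} with $H_j=\mathbf{1}_P\mathbf{1}_{[j_P,\infty)}(j)W^{1/p}$, the left side is expressed as a double series over $(k,l)$ of $L\dot A_{pq}$-norms of integral averages $[\fint_{B(\cdot,2^{l+k_+-i})}|W^{1/p}(\cdot)\vec t_i(y)|^a\,dy]^{1/a}$, and the split in \eqref{BF split} is on the index set according to whether the radius $2^{l+k_+-i}$ is $\leq\ell(P)$ or not (equivalently $i\geq j_P+k_++l$ or not). For the small radii, the ball stays inside $3P$ and the $\tau=0$ Lemmas \ref{ad B1} or \ref{ad F1} apply to $\mathbf{1}_{3P}\vec t_i$, yielding the $|P|^{\tau}$ gain you anticipate. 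For the finitely many scales $i\in\{j_P+k,\ldots,j_P+k_++l-1\}$, the ball spills over into a dilate $2^{j_P+k_++l-i+2}P$, and here the paper keeps $W^{1/p}$ inside the $L^p$-integral and applies Definition \ref{Ap dim} directly with base cube $P$ and dilate $2^{j_P+k_++l-i+2}P$; see claim \eqref{165}. This one-sided application of the $A_p$-dimension never sees $\widetilde d$, yields the factor $2^{(k_-+l)n\widehat\tau}$, and the residual polynomial factor $(1+k_-+l)$ from the number of terms is harmless against the geometric decay furnished by $D>J+n\widehat\tau$, $E>\frac{n}{2}+n\widehat\tau$, $F>J-\frac{n}{2}$. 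Your heuristic for the threshold $\tau=\frac1p(1-\frac{d}{n})$ is the right intuition, but the calculus that realizes it must avoid Lemma \ref{22 precise} and instead apply the $A_p$-dimension condition directly to cube averages as in \eqref{165}.
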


\begin{remark}\label{ad BF vs benchmark}
\begin{enumerate}[\rm(i)]
\item Theorem \ref{ad BF} only depends on the $A_p$-dimension $d$ of $W$, in contrast to Theorem \ref{77} and some other result that depend on the whole triple of $A_p$-dimensions $(d,\widetilde d,\Delta)$.
\item Theorem \ref{ad BF} improves Theorem \ref{77}
in the subcritical case of \eqref{tildeJ}, when $J_\tau=J$.
That is, in this case $\tau\leq\frac1p$, and hence $(\tau-\frac{1}{p})_+=0$ in \eqref{ad YY},
while $n\widehat\tau\leq\frac{d}{p}\leq\Delta$ in \eqref{tauJ}.
It follows that the lower bounds of each of the parameters $D,E,F$ in Theorem \ref{77}
are at least as large as the respective lower bounds in Theorem \ref{ad BF}:
\begin{equation*}
J_\tau+\Delta\geq J+n\widehat\tau,\
\frac{n}{2}+s+n\left(\tau-\frac{1}{p}\right)_+ +\frac{d}{p}
\geq\frac{n}{2}+s+0+n\widehat\tau,
\end{equation*}
and
\begin{equation*}
J_\tau-\frac{n}{2}-s-n\left(\tau-\frac{1}{p}\right)_+ +\frac{\widetilde d}{p'}
\geq J-\frac{n}{2}-s-0+0.
\end{equation*}
\item On the other hand, Theorem \ref{ad BF} is not always superior to Theorem \ref{77}
in the critical and the supercritical ranges, and we will later see that
a modification of the argument is preferable in these cases
to obtain the best possible conclusion.
\end{enumerate}
\end{remark}

To prove Theorem \ref{ad BF}, we need several technical lemmas.
The following simple lemma allows us to restrict our consideration to $s=0$
when studying the boundedness of almost diagonal operators.

\begin{lemma}\label{s=0 enough}
Let $s\in\mathbb{R}$, $\tau\in[0,\infty)$,
$p\in(0,\infty)$, $q\in(0,\infty]$, and $W\in A_p$.
Let $D,E,F\in\mathbb R$.
Let $B:=\{b_{Q,R}\}_{Q,R\in\mathscr{Q}}\subset\mathbb{C}$
and $\widetilde B:=\{\widetilde b_{Q,R}\}_{Q,R\in\mathscr{Q}}$,
where $\widetilde{b}_{Q,R}:=[\ell(R)/\ell(Q)]^sb_{Q,R}$
for any $Q,R\in\mathscr{Q}$. Then
\begin{enumerate}[\rm(i)]
\item\label{ad s vs 0}$B$ is $(D,E,F)$-almost diagonal if and only if
$\widetilde B$ is $(D,E-s,F+s)$-almost diagonal;
\item\label{bd s vs 0}$B$ is bounded on $\dot a^{s,\tau}_{p,q}(W)$ if and only if
$\widetilde B$ is bounded on $\dot a^{0,\tau}_{p,q}(W)$.
\end{enumerate}
\end{lemma}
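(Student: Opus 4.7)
The plan is to treat the two parts separately, reducing both to direct, essentially algebraic computations.

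For part (i), I would unwind the definition of $b_{Q,R}^{DEF}$ from \eqref{bDEF} and verify the identity
\[
\left[\frac{\ell(R)}{\ell(Q)}\right]^s b_{Q,R}^{DEF} = b_{Q,R}^{D,E-s,F+s}
\]
by splitting into the two cases $\ell(Q)\leq\ell(R)$ and $\ell(R)<\ell(Q)$: in the first case, the factor $[\ell(Q)/\ell(R)]^E$ becomes $[\ell(Q)/\ell(R)]^{E-s}$, while in the second case, $[\ell(R)/\ell(Q)]^F$ becomes $[\ell(R)/\ell(Q)]^{F+s}$; the first factor $[1+|x_Q-x_R|/(\ell(Q)\vee\ell(R))]^{-D}$ is unaffected. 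Since $|\widetilde b_{Q,R}|=[\ell(R)/\ell(Q)]^s|b_{Q,R}|$ by definition, the equivalence of the two almost-diagonal conditions is immediate.

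For part (ii), the strategy is to introduce the rescaling map $\Phi$ defined by setting $(\Phi\vec t)_Q:=\ell(Q)^{-s}\vec t_Q$ for every $Q\in\mathscr Q$, and to show that it is an isometric isomorphism from $\dot a^{s,\tau}_{p,q}(W)$ onto $\dot a^{0,\tau}_{p,q}(W)$ which intertwines $B$ and $\widetilde B$. For the isometry, for any $j\in\mathbb Z$ and $Q\in\mathscr Q_j$ one has $\ell(Q)^{-s}=2^{js}$, so $(\Phi\vec t)_j=2^{js}\vec t_j$ by \eqref{vec tj}; plugging this into the definition of $\|\cdot\|_{\dot a^{0,\tau}_{p,q}(W)}$ recovers exactly $\|\vec t\|_{\dot a^{s,\tau}_{p,q}(W)}$. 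For the intertwining, a one-line calculation yields, for each $Q\in\mathscr Q$,
\[
[\Phi(B\vec t)]_Q = \ell(Q)^{-s}\sum_{R\in\mathscr Q} b_{Q,R}\vec t_R = \sum_{R\in\mathscr Q}\left[\frac{\ell(R)}{\ell(Q)}\right]^s b_{Q,R}\,(\Phi\vec t)_R = (\widetilde B\,\Phi\vec t)_Q,
\]
i.e.\ $\Phi B=\widetilde B\Phi$. Combined with the isometry, this instantly gives the equivalence of the two boundedness statements in both directions.

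I do not anticipate any substantive obstacle: both assertions are purely structural, and no convergence or measurability issues intervene beyond what is already handled in the definitions of $B\vec t$ and of the sequence spaces themselves.
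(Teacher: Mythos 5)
Your proposal is correct and follows essentially the same route as the paper: your rescaling map $\Phi$ is exactly the paper's isometry $J_s$ defined by $(J_s\vec t)_R:=[\ell(R)]^{-s}\vec t_R$, and your intertwining relation $\Phi B=\widetilde B\Phi$ is the same as the paper's identity $B\vec t=J_s^{-1}\widetilde B J_s\vec t$. Your explicit case check for part (i) simply spells out what the paper dismisses as immediate from the definition.
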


\begin{proof}
Item \eqref{ad s vs 0} is immediately deduced from the definition.
To show \eqref{bd s vs 0}, let $(J_s\vec{t})_R:=[\ell(R)]^{-s}\vec{t}_R$ for each $R\in\mathscr{Q}$.
Then $J_s:\ \dot a^{s,\tau}_{p,q}(W)\to\dot a^{0,\tau}_{p,q}(W)$
is an isometric isomorphism and
$B\vec{t}=J_{s}^{-1}\widetilde{B}J_s\vec{t}$ for each $\vec t\in\dot a^{s,\tau}_{p,q}(W)$.
From these observations, we infer \eqref{bd s vs 0}.
This finishes the proof of Lemma \ref{s=0 enough}.
\end{proof}

As a key step towards a direct proof of the boundedness of
almost diagonal operators on $\dot a^s_{p,q}$, we have the following conclusion.
{\color{red}This lemma is critical to our program of circumventing the Fefferman--Stein vector-valued maximal inequality.
To simplify the right-hand side below, it might seem tempting to dominate the various averages simply by the maximal function, but we have on purpose left them as they are.}

\begin{lemma}\label{ad prelim}
Let $p\in(0,\infty)$, $q\in(0,\infty]$,
and $B$ be $(D,E,F)$-almost diagonal for some $D,E,F\in\mathbb R$.
Then there exists a positive constant $C$ such that,
for any $\vec{t}=\{\vec{t}_Q\}_{Q\in\mathscr Q}\subset\mathbb C^m$
satisfying that $B\vec t$ is well defined, any $a\in(0,1]$,
and any sequence $\{H_j:\ \mathbb{R}^n\to M_m(\mathbb{C})\}_{j\in\mathbb{Z}}$
of measurable matrix-valued functions,
\begin{align}\label{ABt}
\left\|\left\{H_j\left(B\vec{t}\right)_j\right\}_{j\in\mathbb Z}\right\|_{L\dot A_{p,q}}^r
&\leq C\sum_{k\in\mathbb{Z}}\sum_{l=0}^\infty
\Bigg[2^{-(E-\frac{n}{2})k_-}2^{-k_+(F+\frac{n}{2}-\frac{n}{a})}2^{-(D-\frac{n}{a})l} \\
&\quad\times\left.\left\|\left\{
\left[\fint_{B(\cdot,2^{l+k_+-i})}\left| H_{i-k}(\cdot)\vec{t}_{i}(y)\right|^a
\,dy\right]^{\frac{1}{a}}\right\}_{i\in\mathbb Z}\right\|_{L\dot A_{p,q}}\right]^r,\notag
\end{align}
where $r:=p\wedge q\wedge1$,
both $\vec{t}_i$ and $(B\vec{t})_j$ are the same as in \eqref{vec tj},
and the quasi-norms are taken in either of
$L\dot A_{p,q}\in\{L^p\ell^q,\ell^qL^p\}$.
\end{lemma}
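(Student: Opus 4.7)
The plan is to prove \eqref{ABt} via a pointwise decomposition of $H_j(x)(B\vec{t})_j(x)$ into a double series indexed by a relative scale $k\in\mathbb{Z}$ and a dyadic distance annulus $l\in\mathbb{Z}_+$, followed by the $r$-quasi-subadditivity of $L\dot A_{p,q}$ with $r=p\wedge q\wedge 1$, which converts the quasi-norm of a series into a series of $r$-th powers of quasi-norms. Since the mixed quasi-norm $L\dot A_{p,q}$ is invariant under translation of the scale index, once the pointwise bound is in place the re-indexing $j\mapsto i:=j+k$ inside the quasi-norm costs nothing, which is how the $i$-parameter on the right-hand side of \eqref{ABt} is produced.

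For the pointwise decomposition, fix $x\in Q\in\mathscr{Q}_j$, so that by definition $(B\vec{t})_j(x)=|Q|^{-1/2}\sum_R b_{Q,R}\vec{t}_R$. I would partition the sum according to the scale $i=j+k$ of $R$, and further by the dyadic shell $2^l\le 1+|x_Q-x_R|/(\ell(Q)\vee\ell(R))<2^{l+1}$; the $(D,E,F)$-almost diagonal condition then controls the corresponding block by $C\cdot 2^{-Dl}\cdot 2^{-k_+F-k_-E}$. Since $\ell(Q)\vee\ell(R)=2^{-j+k_-}$, the cubes $R$ contributing to the shell have union contained in $B(x,C\cdot 2^{l+k_+-i})$ by Lemma \ref{33y}. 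Inside each block I would apply Lemma \ref{famous 2} with exponent $a\in(0,1]$ to replace $\sum_R|H_j(x)\vec{t}_R|$ by $(\sum_R|H_j(x)\vec{t}_R|^a)^{1/a}$, then convert the discrete sum into the integral $|R|^{a/2-1}\int_{\cup R}|H_j(x)\vec{t}_i(y)|^a\,dy$ via the identity $|H_j(x)\vec{t}_i(y)|=|H_j(x)\vec{t}_R|\,|R|^{-1/2}$ valid for $y\in R$, enlarge the domain of integration to the ball $B(x,C\cdot 2^{l+k_+-i})$, and rewrite the resulting integral as $|B(x,r)|\fint_{B(x,r)}$; taking the $a$-th root then produces precisely the averaged factor $[\fint_{B(x,2^{l+k_+-i})}|H_{i-k}(x)\vec{t}_i(y)|^a\,dy]^{1/a}$ appearing on the right-hand side of \eqref{ABt}.

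The decisive step will be bookkeeping. Multiplying the geometric prefactors $|Q|^{-1/2}=2^{n(i-k)/2}$, $|R|^{1/2-1/a}=2^{-ni(1/2-1/a)}$ and $|B(x,r)|^{1/a}\sim 2^{n(l+k_+-i)/a}$ together with the almost-diagonal factor $2^{-Dl-k_+F-k_-E}$, the $i$-dependent exponents cancel, and the residual $k$-dependent exponents rearrange via $nk/2=n(k_+-k_-)/2$ into the form $C\cdot 2^{-(E-n/2)k_-}\cdot 2^{-k_+(F+n/2-n/a)}\cdot 2^{-(D-n/a)l}$ predicted by \eqref{ABt}. This yields the required pointwise estimate, and the proof finishes by applying $r$-quasi-subadditivity of $L\dot A_{p,q}$ (valid since $r\le p$ and $r\le q$) to bring the sum over $(k,l)$ outside the quasi-norm, then performing the shift $j\mapsto j+k=i$ of the scale index. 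The main technical obstacle is expected to lie precisely in this exponent algebra, in checking that the six separate factors conspire to give exactly the coefficient claimed in \eqref{ABt}; the geometric inclusion $\bigcup R\subset B(x,C\cdot 2^{l+k_+-i})$ is a routine consequence of Lemma \ref{33y}, and no further harmonic-analytic input is needed.
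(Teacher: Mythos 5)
Your proposal is correct and essentially reproduces the paper's proof: both decompose the sum over $R$ according to the relative scale $k=i-j$ and a dyadic distance parameter $l$, use Lemma~\ref{famous 2} with $a\in(0,1]$ together with the piecewise constancy of $\vec t_i$ to pass to $L^a$-averages over balls of radius $\sim 2^{l+k_+-i}$, and conclude via $r$-quasi-subadditivity of $L\dot A_{p,q}$ and translation invariance of the scale index. The only difference is cosmetic -- you perform the shell decomposition and the power trick directly on the discrete sum over $R$ before converting to an integral, whereas the paper first rewrites $\sum_{R\in\mathscr{Q}_i}(\cdots)\vec t_R$ as $2^{in/2}\int(\cdots)\vec t_i(y)\,dy$ and only then decomposes into shells and applies the power trick to the average -- but the resulting exponent bookkeeping is identical (with the understanding that the expansion constant in the ball radius is absorbed by a reindexing of $l$, as the paper notes).
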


Before going to the proof, let us note that two prominent choices of $H_j$ for any $j\in\mathbb Z$
give us the following norms on the left of \eqref{ABt}.
Let $p\in(0,\infty)$, $q\in(0,\infty]$, $W\in A_p$,
and $\mathbb{A}:=\{A_Q\}_{Q\in\mathscr{Q}}$ be a sequence of
reducing operators of order $p$ for $W$. Then
\begin{equation*}
\left\|\left\{H_j\left(B\vec{t}\right)_j\right\}_{j\in\mathbb Z}\right\|_{L\dot A_{p,q}}
=\begin{cases}
\left\|B\vec{t}\right\|_{\dot a^0_{p,q}(W)}
&\text{if }H_j=W^{\frac{1}{p}}\text{ for all }j\in\mathbb Z,\\
\left\|B\vec{t}\right\|_{\dot a^0_{p,q}(\mathbb{A})}
&\text{if }H_j=A_j\text{ for all }j\in\mathbb Z,
\end{cases}
\end{equation*}
where $A_j$ for any $j\in\mathbb Z$ is the same as in \eqref{Aj}.
Thus, whenever we can estimate the $L\dot A_{p,q}$ quasi-norm on the right-hand side
by $\|\vec{t}\|_{\dot a^0_{p,q}(W)}$ times a function of $(k,l)$
for which the double series is summable, then
\begin{align*}
\left\|B\vec{t}\right\|_{\dot a^0_{p,q}(W)}
\lesssim\left\|\vec{t}\right\|_{\dot a^0_{p,q}(W)}
\left\{\sum_{k\in\mathbb{Z}}\sum_{l=0}^\infty
\left[2^{-(E-\frac{n}{2})k_-}2^{-k_+(F+\frac{n}{2}-\frac{n}{a})}2^{-(D-\frac{n}{a})l}\right]^r \right\}^{\frac{1}{r}}
\sim\left\|\vec{t}\right\|_{\dot a^0_{p,q}(W)},
\end{align*}
we obtain the boundedness of $B$ on $\dot a^0_{p,q}(W)$,
recalling that this space coincides with $\dot a^0_{p,q}(\mathbb A)$
by \cite[Theorem 3.27]{bhyy1}.
We actually only use the below version with $H_j=W^{\frac{1}{p}}$
for all $j\in\mathbb Z$,
but we find it worthwhile pointing out that,
up to this point, everything works for the other version as well.

\begin{proof}[Proof of Lemma \ref{ad prelim}]
Let us begin by setting up some convenient notation.
We use the big-$O$ notation in vectorial computations as follows.
As usual, $f=O(g)$ means $|f|\leq Cg$ for some positive constant $C$ that
may change from one occurrence to the next
but it is independent of the main parameters in both $f$ and $g$. Thus, for instance,
\begin{equation}\label{vOvi}
\vec{v}=\sum_iO(1)\vec{v}_i
\end{equation}
means that
$\vec{v}=\sum_ic_i\vec{v}_i$ and, for any $i$, $|c_i|\leq C$
with $C$ being a positive constant independent of $i$.
This further implies, in particular, that
\begin{equation}\label{v<vi}
|\vec{v}|\lesssim\sum_i|\vec{v}_i|,
\end{equation}
but \eqref{vOvi} is strictly stronger than \eqref{v<vi}.
For instance, \eqref{vOvi} also further implies that, for any matrix $A$, we have
$\left|A\vec v\right|
\lesssim\sum_i\left|A\vec{v}_i\right|,$
which will be essential for us below.

Turning to the actual proof,
let $j\in\mathbb{Z}$ and $x\in Q\in\mathscr{Q}_j$.
Employing the notation just introduced with the almost diagonal assumptions, we have
\begin{align}\label{ad step 1}
\left(B\vec{t}\right)_Q
&=\sum_{R\in\mathscr{Q}}b_{Q,R}\vec{t}_R\\
&=\sum_{i\in\mathbb{Z}}
O\left(2^{-(j-i)_+E}2^{-(i-j)_+ F}\right)
\sum_{R\in\mathscr{Q}_i}
O\left(\left[1+2^{i\wedge j}|x-x_R|\right]^{-D}\right)\vec{t}_R.\notag
\end{align}
Notice that, by Lemma \ref{33y}, we obtain
\begin{align*}
\sum_{R\in\mathscr{Q}_i}O\left(\left[1+2^{i\wedge j}|x-x_R|\right]^{-D}\right)\vec{t}_R
&=\sum_{R\in\mathscr{Q}_i}2^{i\frac{n}{2}}\int_R
O\left(\left[1+2^{i\wedge j}|x-y|\right]^{-D}\right)\frac{\vec{t}_R}{|R|^{\frac12}}\,dy\\
&=2^{i\frac{n}{2}}\int_{\mathbb{R}^n}
O\left(\left[1+2^{i\wedge j}|x-y|\right]^{-D}\right)\vec{t}_i(y)\,dy,
\end{align*}
where $\vec t_i$ is the same as in \eqref{vec tj}. Moreover, we obviously have
\begin{align*}
&\int_{\mathbb{R}^n}
O\left(\left[1+2^{i\wedge j}|x-y|\right]^{-D}\right)\vec{t}_i(y)\,dy\\
&\quad=\int_{B(x,2^{-(i\wedge j)})}O(1)\vec{t}_i(y)\,dy +\sum_{l=1}^\infty\int_{B(x,2^{l-(i\wedge j)})\setminus B(x,2^{l-1-(i\wedge j)})}
O\left(2^{-D l}\right)\vec{t}_i(y)\,dy\\
&\quad=\sum_{l=0}^\infty2^{-Dl}2^{[l-(i\wedge j)]n}\fint_{B(x,2^{l-(i\wedge j)})}O(1)\vec{t}_i(y)\,dy.
\end{align*}
Recalling the notation \eqref{vec tj}, we thus have
\begin{align*}
\left(B\vec{t}\right)_j(x)
&=\sum_{i\in\mathbb{Z}}2^{-(j-i)_+E}2^{-(i-j)_+ F}
2^{(i+j)\frac{n}{2}}2^{-(i\wedge j)n}
\sum_{l=0}^\infty2^{-(D-n)l}
\fint_{B(x,2^{l-(i\wedge j)})}O(1)\vec{t}_i(y)\,dy\\
&=\sum_{i\in\mathbb{Z}}2^{-(j-i)_+(E-\frac{n}{2})}2^{-(i-j)_+(F-\frac{n}{2})}
\sum_{l=0}^\infty2^{-(D-n)l}\fint_{B(x,2^{l-(i\wedge j)})}O(1)\vec{t}_i(y)\,dy.
\end{align*}
Notice that, for any $i\in\mathbb{Z}$ and $R\in\mathscr{Q}_i$,
$\vec{t}_i$ is constant on $R$.
From this and Lemma \ref{famous 2}, we deduce that, for any matrix $A\in M_m(\mathbb{C})$,
any $x\in\mathbb{R}^n$, $\ell\in\mathbb{Z}_+$, and $i\in\mathbb{Z}$,
\begin{align*}
\fint_{B(x,2^{l-(i\wedge j)})}\left|A\vec{t}_i(y)\right|\,dy
&\lesssim\sum_{\substack{Q\in\mathscr{Q}_i\\
Q\cap B(x,2^{l-(i\wedge j)})\neq\varnothing}}
\left|A\vec{t}_i(x_Q)\right|\frac{2^{-in}}{2^{[l-(i\wedge j)]n}}\\
&\lesssim\left[\sum_{\substack{Q\in\mathscr{Q}_i\\
Q\cap B(x,2^{l-(i\wedge j)})\neq\varnothing}}
\left|A\vec{t}_i(x_Q)\right|^a\right]^{\frac{1}{a}}
\frac{2^{-in}}{2^{[l-(i\wedge j)]n}}\\
&\lesssim\left[\fint_{B(x,c_n2^{l-(i\wedge j)})}
\left|A\vec{t}_i(y)\right|^a\,dy\right]^{\frac{1}{a}}
2^{[l+i-(i\wedge j)]n(\frac{1}{a}-1)},
\end{align*}
where $i-(i\wedge j)=(i-j)_+$ and $c_n:=1+\sqrt{n}$.
Choosing $A=H_j(x)$ and reindexing the summation in $l$,
we can drop the expansion factor $c_n$ from the ball in the integral average to arrive at
\begin{align*}
&\left|H_j(x)\left(B\vec{t}\right)_j(x)\right|\\
&\quad\lesssim\sum_{i\in\mathbb{Z}}2^{-(j-i)_+(E-\frac{n}{2})}
2^{-(i-j)_+(F+\frac{n}{2}-\frac{n}{a})}
\sum_{l=0}^\infty2^{-(D-\frac{n}{a})l}
\left[\fint_{B(x,2^{l-(i\wedge j)})}
\left|H_j(x)\vec{t}_i(y)\right|^a\,dy\right]^{\frac{1}{a}}\\
&\quad=\sum_{k\in\mathbb{Z}}2^{-(E-\frac{n}{2})k_-}2^{-k_+(F+\frac{n}{2}-\frac{n}{a})}
\sum_{l=0}^\infty2^{-(D-\frac{n}{a})l}
\left[\fint_{B(x,2^{l-[(j+k)\wedge j]})}
\left|H_j(x)\vec{t}_{j+k}(y)\right|^a\,dy\right]^{\frac{1}{a}}.
\end{align*}
Recall that $L\dot A_{p,q}\in\{L^p\ell^q,\ell^qL^p\}$.
Since $\|\cdot\|_{L\dot A_{p,q}}^r$ satisfies the triangle inequality, it follows that
\begin{align}\label{5.15x}
\left\|\left\{H_j\left(B\vec{t}\right)_j\right\}_{j\in\mathbb Z}\right\|_{L\dot A_{p,q}}^r
&\lesssim\sum_{k\in\mathbb{Z}}\sum_{l=0}^\infty
\Bigg[2^{-(E-\frac{n}{2})k_-}2^{-k_+(F+\frac{n}{2}-\frac{n}{a})}2^{-(D-\frac{n}{a})l}\\
&\quad\times\left.\left\|\left\{\left[\fint_{B(\cdot,2^{l-[(j+k)\wedge j]})}
\left|H_j(\cdot)\vec{t}_{j+k}(y)\right|^a\,dy\right]^{\frac{1}{a}}
\right\}_{j\in\mathbb Z}\right\|_{L\dot A_{p,q}}\right]^r.\notag
\end{align}
By the shift invariance of $\ell^q$, we have
\begin{align*}
&\left\|\left\{\left[\fint_{B(\cdot,2^{l-[(j+k)\wedge j]})}
\left|H_j(\cdot)\vec{t}_{j+k}(y)\right|^a\,dy\right]^{\frac{1}{a}}
\right\}_{j\in\mathbb Z}\right\|_{L\dot A_{p,q}}\\
&\quad=\left\|\left\{\left[\fint_{B(\cdot,2^{l-[i\wedge (i-k)]})}
\left|H_{i-k}(\cdot)\vec{t}_{i}(y)\right|^a\,dy\right]^{\frac{1}{a}}
\right\}_{j\in\mathbb Z}\right\|_{L\dot A_{p,q}},
\end{align*}
where $i\wedge(i-k)=i-k_+$,
and substituting this back into \eqref{5.15x} gives the claimed estimate \eqref{ABt}.
This finishes the proof of Lemma \ref{ad prelim}.
\end{proof}

\begin{remark}\label{ad conv}
By inspection of the proof of Lemma \ref{ad prelim} [see in particular \eqref{ad step 1}],
the same proof gives a slightly stronger estimate,
with the left-hand side of \eqref{ABt} replaced by
\begin{equation*}
\left\|\left\{\sum_{Q\in\mathscr{Q}_j}\widetilde{\mathbf{1}}_Q(\cdot)
\sum_{R\in\mathscr{Q}}\left|H_j(\cdot)b_{Q,R}\vec{t}_R\right|
\right\}_{j\in\mathbb Z}\right\|_{L\dot A_{p,q}}^r.
\end{equation*}
In particular, whenever we have the finiteness of the right-hand side of \eqref{ABt},
then, for any $j\in\mathbb Z$, $Q\in\mathscr{Q}_j$, and almost every $x\in Q$,
we have
$\sum_{R\in\mathscr{Q}}
|H_j(x)b_{Q,R}\vec{t}_R|<\infty.$
Whenever we have the finiteness of the right-hand side of \eqref{ABt}
with functions $H_j$ such that,
for any $j\in\mathbb Z$ and $Q\in\mathscr{Q}_j$,
the matrix $H_j(x)$ is invertible at almost every $x\in Q$
(which holds in particular in the primary case of
interest with $H_j=W^{\frac{1}{p}}$ for any $j\in\mathbb Z$ and $W\in A_p$),
then it follows from
\begin{equation*}
\left|b_{Q,R}\vec{t}_R\right|
\leq\left\|\left[H_j(x)\right]^{-1}\right\|\left|H_j(x)b_{Q,R}\vec{t}_R\right|,\
\forall\,R\in\mathscr{Q},
\end{equation*}
that the series defining $(B\vec{t})_Q$ is absolutely convergent,
in other word,
$\sum_{R\in\mathscr{Q}}|b_{Q,R}\vec{t}_R|<\infty$
for any $Q\in\mathscr{Q}$ and $\vec{t}=\{\vec{t}_R\}_{R\in\mathscr{Q}}\subset\mathbb C^m$
that makes the right-hand side of \eqref{ABt} finite.
\end{remark}

\section{Boundedness on Besov Sequence Spaces}\label{sec ad Besov}

In this section, we prove the special case of Theorem \ref{ad BF}
for the Besov sequence space $\dot b_{p,q}^s(W)=\dot b_{p,q}^{s,0}(W)$.
Besides being an interesting case in its own right,
the techniques that we develop will also be applied to the proof of
the full Theorem \ref{ad BF} further below.
We begin by estimating the quantities on the right-hand side of
the conclusion of Lemma \ref{ad prelim} in the case at hand.

\begin{lemma}\label{ad B1}
Let $p\in(0,\infty)$, $q\in(0,\infty]$,
$W\in A_p$, and $a:=p\wedge1$. Then
there exists a positive constant $C$,
depending only on $p$ and $n$,
such that, for any $k,l\in\mathbb Z$ and $\vec{t}\in\dot b^0_{p,q}(W)$,
\begin{equation}\label{162}
\left\|\left\{\left[\fint_{B(\cdot,2^{l+k_+-i})}
\left|W^{\frac{1}{p}}(\cdot)\vec{t}_i(y)\right|^a\,dy\right]^{\frac{1}{a}}
\right\}_{i\in\mathbb Z}\right\|_{\ell^qL^p}
\leq C[W]_{A_p}^{\frac1p}\left\|\vec{t}\right\|_{\dot b^0_{p,q}(W)},
\end{equation}
where $\vec t_i$ for any $i\in\mathbb Z$ is the same as in \eqref{vec tj}.
\end{lemma}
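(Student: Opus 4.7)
The plan is to reduce \eqref{162} to a scale-invariant $L^p$ estimate that is uniform in the radius $r := 2^{l+k_+-i}$ (hence in $k,l$): once established for each fixed $i$, taking the $\ell^q$-norm trivially produces the right-hand side of \eqref{162} by the very definition of $\|\vec{t}\|_{\dot b^0_{p,q}(W)}$. Concretely, I aim to show that, for each $i \in \mathbb{Z}$ and every $r > 0$,
\[
\left\|\left[\fint_{B(\cdot,r)}\left|W^{\frac{1}{p}}(\cdot)\vec{t}_i(y)\right|^a\,dy\right]^{\frac{1}{a}}\right\|_{L^p}
\lesssim [W]_{A_p}^{\frac{1}{p}}\left\|W^{\frac{1}{p}}\vec{t}_i\right\|_{L^p}
\]
with an implicit constant depending only on $p$ and $n$.

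The common device is the pointwise bound $|W^{\frac{1}{p}}(x)\vec v|\leq\|W^{\frac{1}{p}}(x)W^{-\frac{1}{p}}(y)\|\,|W^{\frac{1}{p}}(y)\vec v|$ applied with $\vec v := \vec t_i(y)$, which transfers the weight from the fixed center $x$ to the integration variable $y$. When $p \in (0,1]$, so $a = p$, I would raise to the $p$-th power, swap the order of integration by Fubini, and, for each fixed $y$, bound $\fint_{B(y,r)}\|W^{\frac{1}{p}}(x)W^{-\frac{1}{p}}(y)\|^p\,dx$ by $C[W]_{A_p}$: enlarge $B(y,r)$ to a concentric cube $Q$ of comparable volume with $y \in Q$ and invoke the essential-supremum form of Definition \ref{def ap}.

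When $p \in (1,\infty)$, so $a = 1$, I would instead apply H\"older's inequality in $y$ with exponents $(p',p)$ inside the ball average, so that
\[
\fint_{B(x,r)}\left|W^{\frac{1}{p}}(x)\vec{t}_i(y)\right|dy
\leq\left[\fint_{B(x,r)}\left\|W^{\frac{1}{p}}(x)W^{-\frac{1}{p}}(y)\right\|^{p'}dy\right]^{\frac{1}{p'}}\left[\fint_{B(x,r)}\left|W^{\frac{1}{p}}(y)\vec{t}_i(y)\right|^p dy\right]^{\frac{1}{p}}.
\]
After raising to the $p$-th power, integrating in $x$, and applying Fubini to pull $|W^{\frac{1}{p}}(y)\vec{t}_i(y)|^p$ outside, there remains the weight-kernel
\[
\fint_{B(z,r)}\left[\fint_{B(x,r)}\left\|W^{\frac{1}{p}}(x)W^{-\frac{1}{p}}(y)\right\|^{p'}dy\right]^{\frac{p}{p'}}dx,
\]
which I would bound by $C[W]_{A_p}$ by noting that, for $|x-z|<r$, both $B(x,r)$ and $B(z,r)$ are contained in one concentric cube $Q$ (centered at $z$) of side comparable to $r$; replacing both ball averages by $\fint_Q$ up to a dimensional constant, the $p>1$ form of Definition \ref{def ap} applies.

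The main obstacle is the case $p > 1$: the $A_p$ condition is formulated via a double integral on a single cube, so some bookkeeping is required to consolidate the two nested ball averages (over \emph{different} centers) into the single-cube double average of Definition \ref{def ap} without incurring a radius-dependent factor. The $p\leq 1$ case is easier precisely because the essential supremum in the $A_p$ condition accommodates arbitrary $y\in Q$. With the scale-invariant $L^p$ bound in hand, summing its $q$-th power over $i \in \mathbb{Z}$ finishes the proof of \eqref{162}.
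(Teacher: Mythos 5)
Your proposal is correct and captures the key mechanism of the paper's proof: transfer the weight from the fixed point $x$ to the integration variable $y$ via the submultiplicativity $|W^{1/p}(x)\vec v|\le\|W^{1/p}(x)W^{-1/p}(y)\|\,|W^{1/p}(y)\vec v|$, then localize to cubes of scale comparable to $r$ so that Definition \ref{def ap} applies with a constant depending only on $n$ and $p$. The difference is in how you localize. The paper's proof tiles $\mathbb R^n$ by a fixed dyadic grid $\mathscr D=\mathscr Q_{i-l-k_+}$ of edge length $\rho$, observes $B(x,\rho)\subset 3Q$ for $x\in Q\in\mathscr D$, applies the $A_p$ condition on $3Q$, and then sums over $Q$ invoking the bounded overlap of the cubes $3Q$. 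Your version applies Fubini first (for $p\le 1$ moving the integration to the variable $y$; for $p>1$ to the inner variable $z$ of the second H\"older factor), and then compares the ball averages at each point with a concentric cube of side $\sim r$ centered at that point. This ``moving cube'' variant bypasses the dyadic decomposition and the bounded-overlap step, giving a slightly more direct route to the scale-invariant kernel bound $K(\cdot)\lesssim[W]_{A_p}$; the paper's grid-based version is a more hands-on way to make the same comparison uniform in the location. Both deliver the same $[W]_{A_p}^{1/p}$ constant and both reduce \eqref{162} to a bound for a single $i$ followed by a trivial $\ell^q$-summation.
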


\begin{proof}
Let $k,l,i\in\mathbb Z$ and $\vec{t}\in\dot b^0_{p,q}(W)$ be fixed.
For any $x\in\mathbb{R}^n$ and any measurable set
$E\subset\mathbb{R}^n$ with $|E|\in(0,\infty)$, write
$$
F(x,E):=\left[\fint_E
\left|W^{\frac{1}{p}}(x)\vec{t}_i(y)\right|^a\,dy\right]^{\frac{1}{a}}.
$$
We need to estimate $\|F(\cdot,B(\cdot,\rho))\|_{L^p}$,
where $\rho:=2^{\ell+k_+-i}$.
Let $\mathscr{D}:=\mathscr{Q}_{i-\ell-k_+}$
be a partition of $\mathbb R^n$ into dyadic cubes of edge length $\rho$.
Then, by the definition of $F(x,E)$, we have
\begin{align}\label{161}
\|F(\cdot,B(\cdot,\rho))\|_{L^p}^p
&=\sum_{Q\in\mathscr{D}}\int_Q [F(x,B(x,\rho))]^p\,dx\\
&\lesssim\sum_{Q\in\mathscr{D}}\int_Q [F(x,Q^*)]^p\,dx
\leq\sum_{Q\in\mathscr{D}}\int_{Q^*} [F(x,Q^*)]^p\,dx,\notag
\end{align}
where $Q^*:=3Q$ and we used the fact that $B(x,\rho)\subset Q^*$ for any $x\in Q$.

If $p\in(0,1]$, then, for any $Q\in\mathscr{D}$,
\begin{align*}
\int_{Q^*}[F(x,Q^*)]^p\,dx
&=\int_{Q^*}\fint_{Q^*}
\left|W^{\frac{1}{p}}(x)W^{-\frac{1}{p}}(y)W^{\frac{1}{p}}(y)\vec{t}_i(y)\right|^p\,dy\,dx\\
&\leq\int_{Q^*}\left[\fint_{Q^*}\left\|W^{\frac{1}{p}}(x)W^{-\frac{1}{p}}(y)\right\|^p\,dx\right]
\left|W^{\frac{1}{p}}(y)\vec{t}_i(y)\right|^p\,dy\\
&\leq[W]_{A_p}\int_{Q^*}\left|W^{\frac{1}{p}}(y)\vec{t}_i(y)\right|^p\,dy
\end{align*}
directly by Definition \ref{def ap}.
If $p\in(1,\infty)$, then,
from H\"older's inequality and Definition \ref{def ap} again, it follows that,
for any $Q\in\mathscr{D}$,
\begin{align*}
\int_{Q^*}[F(x,Q^*)]^p\,dx
&=\int_{Q^*}\left[\fint_{Q^*}
\left|W^{\frac{1}{p}}(x)W^{-\frac{1}{p}}(y)W^{\frac{1}{p}}(y)\vec{t}_i(y)\right|\,dy\right]^p\,dx\\
&\leq\int_{Q^*}
\left[\fint_{Q^*}\left\|W^{\frac{1}{p}}(x)W^{-\frac{1}{p}}(y)\right\|^{p'}\,dy\right]^{\frac{p}{p'}}
\left[\fint_{Q^*}\left|W^{\frac{1}{p}}(y)\vec{t}_i(y)\right|^p\,dy\right]\,dx\\
&=\fint_{Q^*}\left[\fint_{Q^*}
\left\|W^{\frac{1}{p}}(x)W^{-\frac{1}{p}}(y)\right\|^{p'}\,dy\right]^{\frac{p}{p'}}\,dx
\int_{Q^*}\left|W^{\frac{1}{p}}(y)\vec{t}_i(y)\right|^p\,dy\\
&\leq[W]_{A_p}\int_{Q^*}\left|W^{\frac{1}{p}}(y)\vec{t}_i(y)\right|^p\,dy.
\end{align*}
So in both cases we conclude that
\begin{align*}
\sum_{Q\in\mathscr{D}}\int_{Q^*}[F(x,Q^*)]^p\,dx
\leq[W]_{A_p}\sum_{Q\in\mathscr{D}}
\int_{Q^*}\left|W^{\frac{1}{p}}(y)\vec{t}_i(y)\right|^p\,dy
\sim[W]_{A_p}\left\|\vec{t}_i\right\|_{L^p(W)}^p,
\end{align*}
using the bounded overlap of the expanded cubes $Q^*$ in the last step.
By this and \eqref{161}, we obtain
\begin{equation*}
\left\|\left[\fint_{B(\cdot,2^{l+k_+-i})}
\left|W^{\frac{1}{p}}(\cdot)\vec{t}_i(y)\right|^a\,dy\right]^{\frac{1}{a}}\right\|_{L^p}
\lesssim[W]_{A_p}^{\frac{1}{p}}\left\|\vec{t}_i\right\|_{L^p(W)},
\end{equation*}
and hence \eqref{162} follows from taking the $\ell^q$ norms
with respect to $i\in\mathbb Z$ of both sides in the above inequality.
This finishes the proof of Lemma \ref{ad B1}.
\end{proof}

Next, we establish the boundedness of almost diagonal operators on $\dot b^s_{p,q}(W)$.

\begin{theorem}\label{ad Besov}
Let $s\in\mathbb R$, $p\in(0,\infty)$, $q\in(0,\infty]$, and $W\in A_p$.
Suppose that $B$ is $(D,E,F)$-almost diagonal with parameters
$D>J,$ $E>\frac{n}{2}+s,$ and $F>J-\frac{n}{2}-s,$
where $J:=\frac{n}{\min\{1,p\}}$.
Then $B$ is bounded on $\dot b^s_{p,q}(W)$
and there exists a positive constant $C$,
independent of $W$, such that,
for any $\vec t\in\dot b^s_{p,q}(W)$,
\begin{equation}\label{ad Besov Ap}
\left\|B\vec{t}\right\|_{\dot b^s_{p,q}(W)}
\leq C[W]_{A_p}^{\frac1p}\left\|\vec{t}\right\|_{\dot b^s_{p,q}(W)}.
\end{equation}
\end{theorem}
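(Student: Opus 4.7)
The plan is to combine the three preceding results in a straightforward way. First, by Lemma \ref{s=0 enough}, it suffices to prove the result for $s = 0$: the transformation $\widetilde b_{Q,R} := [\ell(R)/\ell(Q)]^s b_{Q,R}$ converts $(D,E,F)$-almost diagonality to $(D, E-s, F+s)$-almost diagonality, while the hypotheses $E > n/2 + s$ and $F > J - n/2 - s$ become $E - s > n/2$ and $F + s > J - n/2$ in the renormalized problem, and $D > J$ is unaffected. So the task reduces to showing that any $(D,E,F)$-almost diagonal matrix with $D > J$, $E > n/2$, and $F > J - n/2$ is bounded on $\dot b^0_{p,q}(W)$ with the claimed dependence on $[W]_{A_p}$.

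Second, I would apply Lemma \ref{ad prelim} with $H_j := W^{1/p}$ for every $j \in \mathbb Z$, and with the critical choice $a := p \wedge 1 \in (0,1]$; this choice is what matches $n/a = n/\min\{1,p\} = J$, the Besov index. With $r := p \wedge q \wedge 1$, the lemma yields
\begin{align*}
\left\|B\vec t\right\|_{\dot b^0_{p,q}(W)}^r
&\leq C \sum_{k\in\mathbb Z}\sum_{l=0}^\infty \bigl[2^{-(E-n/2)k_-}\, 2^{-k_+(F + n/2 - J)}\, 2^{-(D-J)l}\bigr]^r \\
&\quad \times \left\|\left\{\left[\fint_{B(\cdot,\,2^{l + k_+ - i})} \left|W^{1/p}(\cdot) \vec t_i(y)\right|^a \, dy\right]^{1/a}\right\}_{i\in\mathbb Z}\right\|_{\ell^q L^p}^r.
\end{align*}
Then Lemma \ref{ad B1} estimates the inner $\ell^q L^p$-quasi-norm by $C [W]_{A_p}^{1/p} \|\vec t\|_{\dot b^0_{p,q}(W)}$, uniformly in $(k,l)$, and this bound factors out of the double series.

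Third, it remains to verify that the geometric-type series in $(k,l)$ converges. Under the reduced hypotheses, each of the three exponents $-(E - n/2)$, $-(F + n/2 - J)$, and $-(D-J)$ is strictly negative; splitting the $k$-sum into $k \geq 0$ (so $k_+ = k$, $k_- = 0$) and $k < 0$ (so $k_+ = 0$, $k_- = -k$), all three geometric series are summable, and extracting the $r$-th root yields \eqref{ad Besov Ap}. Remark \ref{ad conv} then justifies that $(B\vec t)_Q = \sum_R b_{Q,R} \vec t_R$ converges absolutely for almost every $x \in Q$, so that $B\vec t$ is well defined as an element of $\dot b^s_{p,q}(W)$.

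The main substance of the argument is really packaged inside Lemma \ref{ad B1}, which is where the $A_p$-condition is actually used to absorb the local $L^p$-average of $|W^{1/p}(x)\vec t_i(y)|^a$ into the global weighted norm at the single cost $[W]_{A_p}^{1/p}$. Everything else is dyadic decomposition followed by geometric summation. The only point requiring care is the alignment $n/a = J$ enforced by the choice $a = p \wedge 1$, which is exactly what converts the three lower bounds on $D$, $E$, $F$ into the strict positivity of the exponents governing the decay; this is also where the sharpness of the Besov index $J = n/\min\{1,p\}$ emerges naturally from the proof.
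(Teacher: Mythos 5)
Your proposal is correct and follows the paper's own proof essentially verbatim: reduction to $s=0$ via Lemma \ref{s=0 enough}, application of Lemma \ref{ad prelim} with $H_j = W^{1/p}$ and the critical choice $a = p\wedge 1$ so that $n/a = J$, the uniform-in-$(k,l)$ bound from Lemma \ref{ad B1}, geometric summation, and Remark \ref{ad conv} for absolute convergence of $(B\vec t)_Q$.
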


In this result, we have exceptionally chosen to keep track of
the dependence on the $A_p$ constant
because the argument allows a fairly clean form of this dependence.
This is due to the fact that we only use the $A_p$ condition
directly via Definition \ref{def ap},
rather than via some intermediate auxiliary results.

\begin{proof}[Proof of Theorem \ref{ad Besov}]
Suppose that we already know the result for $s=0$.
If $B$ satisfies the assumptions of the present theorem for a general $s\in\mathbb R$,
then $\widetilde{B}$ from Lemma \ref{s=0 enough}
satisfies the assumptions for $s=0$ by that lemma.
By the assumption that we already know the result for $s=0$,
we then obtain that $\widetilde{B}$ is bounded on $\dot b^0_{p,q}(W)$,
and an application of Lemma \ref{s=0 enough}
gives the boundedness of the original $B$ on $\dot b^s_{p,q}(W)$.
Thus, we may assume that $s=0$ to begin with.

Taking $s=0$, we apply Lemma \ref{ad prelim} with
both $H_j=W^{\frac{1}{p}}$ for any $j\in\mathbb Z$ and $a=\min\{1,p\}$.
Observe that $J=\frac{n}{a}$, and hence the coefficient
in front of the norm in \eqref{ABt} takes the form
\begin{equation}\label{coefB}
2^{-(E-\frac{n}{2})k_-}2^{-(F+\frac{n}{2}-\frac{n}{a})k_+}2^{-(D-\frac{n}{a})l}
=2^{-(E-\frac{n}{2})k_-}2^{-[F-(J-\frac{n}{2})]k_+}2^{-(D-J)l},
\end{equation}
where the coefficients of each of $k_-$, $k_+$, and $l$
are strictly positive by the assumption (with $s=0$).
On the other hand, by Lemma \ref{ad B1}, the norm in \eqref{ABt} is estimated by
\begin{equation}\label{163}
\left\|\left\{\left[\fint_{B(\cdot,2^{l+k_+-i})}\left|W^{\frac{1}{p}}(\cdot)\vec{t}_{i}(y)\right|^a\,dy
\right]^{\frac{1}{a}}\right\}_{i\in\mathbb Z}\right\|_{L\dot A_{p,q}}
\lesssim[W]_{A_p}^{\frac1p}\left\|\vec{t}\right\|_{\dot b^0_{pq}(W)},
\end{equation}
which, together with Remark \ref{ad conv},
further implies that $B\vec t$ is well defined.
Moreover, the right-hand side of \eqref{163} is independent of $(k,l)$,
and hence the series with coefficients \eqref{coefB} converges.
These further imply the claimed bound \eqref{ad Besov Ap}
and hence finish the proof of Theorem \ref{ad Besov}.
\end{proof}

\begin{remark}\label{rem max}
Let all the symbols be the same as in Theorem \ref{ad Besov}.
\begin{enumerate}[\rm(i)]
\item For any $p\in(1,\infty)$,
we could have also dominated the integral averages by,
for any $x\in\mathbb{R}^n$, $i\in\mathbb{Z}$, and $\rho\in(0,\infty)$,
\begin{align*}
\fint_{B(x,\rho)}\left|W^{\frac{1}{p}}(x)\vec{t}_i(y)\right|\,dy
=\fint_{B(x,\rho)}\left|W^{\frac{1}{p}}(x)W^{-\frac{1}{p}}(y)W^{\frac{1}{p}}(y)\vec{t}_i(y)\right|\,dy
\leq\mathcal{M}_{W,p}\left(W^{\frac{1}{p}}\vec{t}_i\right)(x),
\end{align*}
where
\begin{equation*}
\mathcal{M}_{W,p}\left(\vec{f}\right)(x)
:=\sup_{t\in(0,\infty)}\fint_{B(x,t)}
\left|W^{\frac{1}{p}}(x)W^{-\frac{1}{p}}(y)\vec{f}(y)\right|\,dy,
\end{equation*}
and, using the boundedness of this maximal operator
from $L^p(\mathbb R^n,\mathbb C^m)$ to $L^p(\mathbb R^n)$
for any $p\in(1,\infty)$ and $W\in A_p$, we obtain
\begin{align*}
&\left\|\left\{\fint_{B(\cdot,2^{l+k_+-i})}\left|W^{\frac{1}{p}}(\cdot)\vec{t}_i(y)\right|\,dy
\right\}_{i\in\mathbb Z}\right\|_{\ell^qL^p}\\
&\quad\leq\left\|\left\{\mathcal{M}_{W,p}\left(W^{\frac{1}{p}}\vec{t}_i\right)
\right\}_{i\in\mathbb Z}\right\|_{\ell^qL^p}
\lesssim[W]_{A_p}^{\frac{1}{p-1}}\left\|\left\{\left|W^{\frac{1}{p}}\vec{t}_i\right|
\right\}_{i\in\mathbb Z}\right\|_{\ell^qL^p}
=[W]_{A_p}^{\frac{1}{p-1}}\left\|\vec t\right\|_{\dot b_{p,q}^0(W)}.
\end{align*}
However, this would have produced a weaker quantitative estimate
$[W]_{A_p}^{\frac{1}{p-1}}$, which is the optimal $L^p$-norm bound
for $\mathcal{M}_{W,p}$ by \cite[Theorem 1.3]{im19}.
\item In principle, we could try an argument of the same type
also in the case of $\dot f^0_{p,q}(W)$.
The above pointwise bound is still valid
and we would then need to establish the estimate
\begin{equation}\label{FS MW}
\left\|\left\{\mathcal{M}_{W,p}\left(\vec{f}_i\right)\right\}_{i\in\mathbb Z}\right\|_{L^p\ell^q}
\lesssim\left\|\left\{\left|\vec{f}_i\right|\right\}_{i\in\mathbb Z}\right\|_{L^p\ell^q}
\end{equation}
for any $\vec{f}_i=W^{\frac{1}{p}}\vec{t}_i$ with $i\in\mathbb Z$.
The prospective bound \eqref{FS MW} would be a Fefferman--Stein vector-valued inequality
for the matrix-weighted maximal operator $\mathcal{M}_{W,p}$.
Whether or not such an estimate is valid {\color{blue} was an open problem, explicitly posed in \cite[Question 7.3]{dkps}, at the time of our completing this investigation. Accordingly,}
%is an interesting open problem recently posed in \cite[Question 7.3]{dkps}.
in Section \ref{sec ad TL}, we will explore some substitutes for %the unknown
estimate \eqref{FS MW}. {\color{blue} In the meantime, estimate \eqref{FS MW} has been established for $p,q\in(1,\infty)$ and $W\in A_p$ in \cite{ks24}, which would now provide an alternative route to our results in this range of parameters. However, we will still follow our original approach, which has the advantage of covering the full scale of exponents $p,q\in(0,\infty)$ at once. See also Remark~\ref{rem max F}.}
\end{enumerate}
\end{remark}

{\color{red}
\section{Sharpness on Besov Spaces}\label{sec sharp Besov}
}

Curiously, the assumptions on $D,E,F$ in Theorem \ref{ad Besov} are independent of the matrix weight $W\in A_p$. In the following, we show that these assumptions are optimal already in the unweighted case (which is of course a special case of the matrix-weighted case, where the weight is the constant identity matrix).

\begin{lemma}\label{ad Besov sharp}
Let $s\in\mathbb R$, $p\in(0,\infty)$, $q\in(0,\infty]$, and $D,E,F\in\mathbb{R}$.
Suppose that every $(D,E,F)$-almost diagonal matrix $B$ is bounded on $\dot b^s_{p,q}$.
Then $D>J,$ $E>\frac{n}{2}+s,$ and $F>J-\frac{n}{2}-s,$
where $J:=\frac{n}{\min\{1,p\}}$.
\end{lemma}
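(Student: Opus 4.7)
The plan is to argue contrapositively: for each of the three possible violations $D\le J$, $E\le \frac{n}{2}+s$, or $F\le J-\frac{n}{2}-s$, I would exhibit a concrete $(D,E,F)$-almost diagonal matrix that is not bounded on $\dot b^s_{p,q}$, thereby contradicting the hypothesis. The natural candidate is $B^{DEF}$ itself, which is $(D,E,F)$-almost diagonal by part (ii) of the remark following the definition of $(D,E,F)$-almost diagonal matrices. The task then reduces to producing test sequences $\vec t\in\dot b^s_{p,q}$ for which $\|B^{DEF}\vec t\|_{\dot b^s_{p,q}}/\|\vec t\|_{\dot b^s_{p,q}}$ can be made arbitrarily large.

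For the range $p\le 1$ (so $J=\frac{n}{p}$), I would test on the \emph{single-atom} sequence $\vec t^{R_0}:=\vec e_1\mathbf 1_{\{R_0\}}$ attached to a fixed cube $R_0\in\mathscr Q_{j_0}$. A direct computation gives $\|\vec t^{R_0}\|_{\dot b^s_{p,q}}\sim 2^{j_0 s}|R_0|^{\frac{1}{p}-\frac{1}{2}}$ and $(B^{DEF}\vec t^{R_0})_Q=b^{DEF}_{Q,R_0}\vec e_1$, so the $L^p$-norm of the image at scale $j$ is essentially $[\sum_{Q\in\mathscr Q_j}|b^{DEF}_{Q,R_0}|^p|Q|^{1-\frac{p}{2}}]^{\frac{1}{p}}$. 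At the common scale $j=j_0$, this sum reduces (up to constants) to $\sum_{k\in\mathbb Z^n}(1+|k|)^{-Dp}$, which converges only when $Dp>n$, i.e., $D>\frac{n}{p}=J$. At fine scales $j>j_0$, the cubes $Q\subset R_0$ contribute a term of order $2^{j(\frac{n}{2}+s-E)}$ (times a factor in $j_0$ alone), whose $\ell^q$-summability as $j\to\infty$ forces $E>\frac{n}{2}+s$. At coarse scales $j<j_0$, the unique ancestor of $R_0$ at scale $j$ contributes a term of order $2^{j(s+F-\frac{n}{p}+\frac{n}{2})}$, whose $\ell^q$-summability as $j\to-\infty$ forces $F>\frac{n}{p}-\frac{n}{2}-s=J-\frac{n}{2}-s$.

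For the complementary range $p>1$ (so $J=n$), the single-atom test yields only $D>\frac{n}{p}$ and $F>\frac{n}{p}-\frac{n}{2}-s$, both strictly weaker than needed. To obtain the sharp bounds $D>n$ and $F>\frac{n}{2}-s$, I would use a \emph{filled-cube} sequence $\vec t^N_R:=\vec e_1$ for every $R\in\mathscr Q_{j_0}$ with $R\subset Q_0$, where $Q_0\in\mathscr Q_{j_0-N}$ is a large dyadic ancestor. Counting gives $\|\vec t^N\|_{\dot b^s_{p,q}}\sim 2^{j_0 s+j_0\frac{n}{2}+(N-j_0)\frac{n}{p}}$. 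For $Q\in\mathscr Q_{j_0}$ near the center of $Q_0$, the distance factor $[1+|x_Q-x_R|/\ell(R_0)]^{-D}$ stays essentially constant as $R$ ranges over $Q_0$, producing
\begin{equation*}
(B^{DEF}\vec t^N)_Q\sim\sum_{\substack{k\in\mathbb Z^n\\|k|\lesssim 2^N}}(1+|k|)^{-D},
\end{equation*}
which grows like $2^{N(n-D)}$ when $D<n$ (and like $N$ when $D=n$); the induced ratio at scale $j_0$ blows up as $N\to\infty$ unless $D>n=J$. For the same $\vec t^N$, evaluating the image at $Q=Q_0$ at the coarse scale $j_0-N$ gives $(B^{DEF}\vec t^N)_{Q_0}\sim 2^{N(n-F)}$, and routine accounting of the factors $|Q_0|^{\frac{1}{p}-\frac{1}{2}}$ and $2^{(j_0-N)s}$ produces
\begin{equation*}
\frac{\|B^{DEF}\vec t^N\|_{\dot b^s_{p,q}}}{\|\vec t^N\|_{\dot b^s_{p,q}}}\gtrsim 2^{N(\frac{n}{2}-s-F)},
\end{equation*}
which blows up as $N\to\infty$ unless $F>\frac{n}{2}-s=J-\frac{n}{2}-s$.

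The main obstacle will be the exponent bookkeeping in the filled-cube argument and the treatment of the borderline equalities $D=J$, $E=\frac{n}{2}+s$, and $F=J-\frac{n}{2}-s$; at each borderline the geometric decay in $N$ or $j$ is lost, and one must fall back on a logarithmic divergence (arising from $Dp=n$ in the $k$-sum, or from the vanishing exponent of the geometric series in $j$) to still force the failure of boundedness. Once all six sub-cases (three parameters across the two ranges of $p$) are verified, the lemma follows.
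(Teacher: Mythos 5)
Your overall strategy --- test $B^{DEF}$ on a single-atom sequence and on a filled-cube sequence, then read off the necessary decay exponents --- is the same one the paper uses, and your exponent computations for the single atom ($D>n/p$, $E>\tfrac n2+s$, $F>\tfrac np-\tfrac n2-s$) and for the filled cube ($D>n$, $F>\tfrac n2-s$) are correct when $q<\infty$. However, there is a genuine gap: you never address the case $q=\infty$, and your argument does not survive it. Your necessary conditions for $E$ and $F$ come from ``$\ell^q$-summability as $j\to\infty$'' (resp.\ $j\to-\infty$) of a geometric sequence $2^{j(\text{exponent})}$. When $q=\infty$ this is merely boundedness of the sequence, which only forces the \emph{non-strict} inequalities $E\ge\tfrac n2+s$ and $F\ge J-\tfrac n2-s$; the same issue afflicts the filled-cube ratio $2^{N(\frac n2-s-F)}$, which is bounded (not divergent) at the borderline $F=\tfrac n2-s$, even after summing over intermediate scales $j\in\{0,\dots,N-1\}$, since the $\ell^\infty$ norm of a geometric sequence with nonnegative exponent is a single term rather than a divergent series. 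The paper handles $q=\infty$ in a separate case with qualitatively different test sequences: a sequence spread over all scales $j\ge 0$ in a bounded region (for $E$), and a polynomially decaying sequence in $|x_Q|$ across all fine scales (for $F$). The key mechanism there is not output summability but the \emph{absolute convergence of a single coefficient} $(B\vec t)_{Q_{0,\mathbf 0}}=\sum_R b_{Q_{0,\mathbf 0},R}\vec t_R$, which becomes an infinite series whose convergence directly forces the strict inequalities. Without an argument of this type your proof establishes only $E\ge\tfrac n2+s$ and $F\ge J-\tfrac n2-s$ when $q=\infty$.

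A secondary issue is that your filled-cube argument for $F$, as sketched, evaluates the image at the single cube $Q_0$ and thus only catches $F<\tfrac n2-s$; even for $q<\infty$, you must sum the contribution over all intermediate scales $j_0-N\le j<j_0$ (as the paper does) so that the borderline $F=\tfrac n2-s$ produces a divergent series. You anticipate this as an ``obstacle'' but do not set up the multi-scale accumulation, so that step should be fleshed out.
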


\begin{proof}
We consider the basic $(D,E,F)$-almost diagonal matrix
$B^{DEF}:=\{b_{Q,R}^{DEF}\}_{Q,R\in\mathscr{Q}}$ from \eqref{bDEF}.

We first estimate the range of $D$.
Let $\vec e\in\mathbb{C}^m$ satisfy $|\vec e|=1$
and $\vec t:=\{\vec t_Q\}_{Q\in\mathscr{Q}}$, where, for any $Q\in\mathscr{Q}$,
\begin{equation}\label{exa1}
\vec t_Q:=\begin{cases}
\vec e&\text{if }Q=Q_{0,\mathbf{0}},\\
\vec{\mathbf{0}}&\text{otherwise}.
\end{cases}
\end{equation}
Then $\|\vec t\|_{\dot b_{p,q}^s}=1$.
By this and the fact that $B$ is bounded on $\dot b^s_{p,q}$, we find that
\begin{align}\label{est1}
\infty
&>\left\|B\vec t\right\|_{\dot b_{p,q}^s}
\geq\left[\sum_{Q\in\mathscr{Q}_0}\left|\left(B\vec t\right)_Q\right|^p\right]^{\frac{1}{p}}\\
&=\left[\sum_{Q\in\mathscr{Q}_0}(b_{Q,Q_{0,\mathbf{0}}})^p\right]^{\frac{1}{p}}
=\left[\sum_{Q\in\mathscr{Q}_0}
\left(1+|x_Q|\right)^{-Dp}\right]^{\frac{1}{p}}\notag
\end{align}
and hence $D>\frac{n}{p}$.
We still need to prove $D>n$.
For any $N\in\mathbb{N}$, let $\vec t^{(N)}:=\{\vec t^{(N)}_Q\}_{Q\in\mathscr{Q}}$,
where, for any $Q\in\mathscr{Q}$,
$$
\vec t^{(N)}_Q
:=\begin{cases}
\vec e&\text{if }Q\in\mathscr{Q}_0\text{ and }|x_Q|<N,\\
\vec{\mathbf{0}}&\text{otherwise}.
\end{cases}
$$
Then, for any $N\in\mathbb{N}$,
$$
\left\|\vec t^{(N)}\right\|_{\dot b_{p,q}^s}
=\left[\sum_{Q\in\mathscr{Q}_0}\left|\vec t^{(N)}_Q\right|^p\right]^{\frac{1}{p}}
=\left[\sum_{k\in\mathbb{Z}^n,\,|k|<N}1\right]^{\frac{1}{p}}
\sim|B(\mathbf{0},N)|^{\frac{1}{p}}
\sim N^{\frac{n}{p}}
$$
and
\begin{align*}
\left\|B\vec t^{(N)}\right\|_{\dot b_{p,q}^s}
&\geq\left[\sum_{Q\in\mathscr{Q}_0}
\left|\left(B\vec t^{(N)}\right)_Q\right|^p\right]^{\frac{1}{p}}
=\left[\sum_{Q\in\mathscr{Q}_0}
\left(\sum_{R\in\mathscr{Q}_0,\,|x_R|<N}b_{Q,R}\right)^p\right]^{\frac{1}{p}}\\
&=\left\{\sum_{k\in\mathbb{Z}^n}
\left[\sum_{l\in\mathbb{Z}^n,\,|l|<N}
\left(\frac{1}{1+|k-l|}\right)^D\right]^p\right\}^{\frac{1}{p}} \\
&\geq\left\{\sum_{k\in\mathbb{Z}^n,\,|k|<\frac{N}{2}}
\left[\sum_{l\in\mathbb{Z}^n,\,|l|<N}
\left(\frac{1}{1+|k-l|}\right)^D\right]^p\right\}^{\frac{1}{p}}\\
&=\left\{\sum_{k\in\mathbb{Z}^n,\,|k|<\frac{N}{2}}
\left[\sum_{l\in\mathbb{Z}^n,\,|k-l|<N}
\left(\frac{1}{1+|l|}\right)^D\right]^p\right\}^{\frac{1}{p}}\\
&\geq\left\{\sum_{k\in\mathbb{Z}^n,\,|k|<\frac{N}{2}}
\left[\sum_{l\in\mathbb{Z}^n,\,|l|<\frac{N}{2}}
\left(\frac{1}{1+|l|}\right)^D\right]^p\right\}^{\frac{1}{p}}
=\sum_{l\in\mathbb{Z}^n,\,|l|<\frac{N}{2}}\left(\frac{1}{1+|l|}\right)^D
\left(\sum_{k\in\mathbb{Z}^n,\,|k|<\frac{N}{2}}1\right)^{\frac{1}{p}}\\
&\sim\sum_{l\in\mathbb{Z}^n,\,|l|<\frac{N}{2}}\left(\frac{1}{1+|l|}\right)^D
\left|B\left(\mathbf{0},\frac{N}{2}\right)\right|^{\frac{1}{p}}
\sim\sum_{l\in\mathbb{Z}^n,\,|l|<\frac{N}{2}}\left(\frac{1}{1+|l|}\right)^DN^{\frac{n}{p}},
\end{align*}
where the positive equivalence constants depend only on $n$.
These, combined with the boundedness of $B$ on $\dot b^s_{p,q}$,
further imply that, for any $N\in\mathbb{N}$,
$\sum_{l\in\mathbb{Z}^n,\,|l|<\frac{N}{2}}(\frac{1}{1+|l|})^D
\lesssim1,$
where the implicit positive constant depends only on $n$.
Letting $N\to\infty$, we obtain
\begin{equation}\label{est2}
\sum_{l\in\mathbb{Z}^n}\left(\frac{1}{1+|l|}\right)^D<\infty
\end{equation}
and hence $D>n$. This finishes the estimation of $D$.

Now, we estimate the ranges of both $E$ and $F$ by considering
the following two cases on $q$.

\emph{Case 1)} $q\in(0,\infty)$.
In this case, letting $\vec t$ be the same as in \eqref{exa1}, then
\begin{align}\label{est3}
\infty
&>\left\|B\vec t\right\|_{\dot b_{p,q}^s}^q
=\sum_{j\in\mathbb{Z}}2^{jsq}
\left[\sum_{Q\in\mathscr{Q}_j}|Q|^{1-\frac{p}{2}}
\left|\left(B\vec t\right)_Q\right|^p\right]^{\frac{q}{p}}\\
&=\sum_{j\in\mathbb{Z}}2^{jsq}
\left[\sum_{Q\in\mathscr{Q}_j}|Q|^{1-\frac{p}{2}}
(b_{Q,Q_{0,\mathbf{0}}})^p\right]^{\frac{q}{p}}\notag\\
&=\sum_{j=0}^\infty2^{jsq}
\left\{\sum_{Q\in\mathscr{Q}_j}|Q|^{1-\frac{p}{2}}
(1+|x_Q|)^{-Dp}[\ell(Q)]^{Ep}\right\}^{\frac{q}{p}}\notag\\
&\quad+\sum_{j=-\infty}^{-1}2^{jsq}
\left\{\sum_{Q\in\mathscr{Q}_j}|Q|^{1-\frac{p}{2}}
\left(1+2^j|x_Q|\right)^{-Dp}[\ell(Q)]^{-Fp}\right\}^{\frac{q}{p}}\notag\\
&=\sum_{j=0}^\infty\left\{2^{-j[(E-\frac{n}{2}-s)p+n]}
\sum_{k\in\mathbb{Z}^n}\left(1+2^{-j}|k|\right)^{-Dp}\right\}^{\frac{q}{p}}\notag\\
&\quad+\sum_{j=-\infty}^{-1}\left[2^{j(F+\frac{n}{2}+s-\frac{n}{p})p}
\sum_{k\in\mathbb{Z}^n}(1+|k|)^{-Dp}\right]^{\frac{q}{p}}.\notag
\end{align}
This, together with both $D>J$ and \eqref{33z}, further implies that
$$
\infty
>\sum_{j=0}^\infty2^{-j(E-\frac{n}{2}-s)q}
+\sum_{j=-\infty}^{-1}2^{j(F+\frac{n}{2}+s-\frac{n}{p})q}
$$
and hence
$E>\frac{n}{2}+s$ and $F>\frac{n}{p}-\frac{n}{2}-s.$

Next, we show $F>\frac{n}{2}-s$.
For any $N\in\mathbb{N}$, let $\vec t^{(N)}:=\{\vec t^{(N)}_Q\}_{Q\in\mathscr{Q}}$,
where, for any $Q\in\mathscr{Q}$,
\begin{equation}\label{exa2}
\vec t^{(N)}_{Q} :=\begin{cases}
|Q|^{\frac{s}{n}+\frac12}\vec e&\text{if }Q\in\mathscr{Q}_N\text{ with }|x_Q|<1,\\
\vec{\mathbf{0}}&\text{otherwise}.
\end{cases}
\end{equation}
Then, by the definitions of both $\vec t^{(N)}$
and $\|\cdot\|_{\dot b_{p,q}^s}$, we obtain
\begin{equation}\label{289x}
\left\|\vec t^{(N)}\right\|_{\dot b_{p,q}^s}
=\left[\sum_{Q\in\mathscr{Q}_N,\,|x_Q|<1}|Q|\right]^{\frac{1}{p}}
=\left(\sum_{k\in\mathbb{Z}^n,\,|k|<2^N}2^{-Nn}\right)^{\frac{1}{p}}
\sim1,
\end{equation}
where the positive equivalence constants depend only on $n$ and $p$.
On the other hand, from the definitions of both $B\vec t^{(N)}$
and $\|\cdot\|_{\dot b_{p,q}^s}$, it follows that
\begin{align}\label{5.28x}
\left\|B\vec t^{(N)}\right\|_{\dot b_{p,q}^s}
&\geq\left[\sum_{j=0}^{N-1}2^{jsq}
\left(\sum_{Q\in\mathscr{Q}_j}|Q|^{1-\frac{p}{2}}
\left|\left(B\vec t^{(N)}\right)_Q\right|^p
\right)^{\frac{q}{p}}\right]^{\frac{1}{q}}\\
&=\left[\sum_{j=0}^{N-1}\left(\sum_{Q\in\mathscr{Q}_j}|Q|
\left|\sum_{R\in\mathscr{Q}_N,\,|x_R|<1}
\frac{2^{(j-N)(F+s+\frac{n}{2})}}{(1+2^j|x_Q-x_R|)^D}
\right|^p\right)^{\frac{q}{p}}\right]^{\frac{1}{q}}\notag\\
&\geq\left[\sum_{j=0}^{N-1}\left(\sum_{k\in\mathbb{Z}^n,\,|k|<2^{j-1}}2^{-jn}
\left|\sum_{l\in\mathbb{Z}^n,\,|l|<2^N}
\frac{2^{(j-N)(F+s+\frac{n}{2})}}{(1+|k-2^{j-N}l|)^D}
\right|^p\right)^{\frac{q}{p}}\right]^{\frac{1}{q}},\notag
\end{align}
where
\begin{align}\label{5.28y}
&\sum_{l\in\mathbb{Z}^n,\,|l|<2^N}
\frac{1}{(1+|k-2^{j-N}l|)^D}\\
&\quad=\sum_{l\in\mathbb{Z}^n,\,|l+2^{N-j}k|<2^N}\frac{1}{(1+2^{j-N}|l|)^D}\notag\\
&\quad\geq\sum_{l\in\mathbb{Z}^n,\,|l|<2^{N-1}}\frac{1}{(1+2^{j-N}|l|)^D}
\geq\sum_{l\in\mathbb{Z}^n,\,|l|<2^{N-1-j}}\frac{1}{(1+2^{j-N}|l|)^D}\notag\\
&\quad\sim\sum_{l\in\mathbb{Z}^n,\,|l|<2^{N-1-j}}1
\sim \left|B\left(\mathbf{0},2^{N-1-j}\right)\right|
\sim2^{(N-j)n}.\notag
\end{align}
Therefore, using \eqref{5.28x} and \eqref{5.28y}, we find that
\begin{align}\label{est4}
\left\|B\vec t^{(N)}\right\|_{\dot b_{p,q}^s}
&\gtrsim\left[\sum_{j=0}^{N-1}\left(\sum_{k\in\mathbb{Z}^n,\,|k|<2^{j-1}}
2^{-jn}2^{(j-N)(F+s-\frac{n}{2})p}\right)^{\frac{q}{p}}\right]^{\frac{1}{q}}
\sim\left[\sum_{j=0}^{N-1}2^{(j-N)(F+s-\frac{n}{2})q}\right]^{\frac{1}{q}}.
\end{align}
This, combined with \eqref{289x} and
the assumption that $B$ is bounded on $\dot b^s_{p,q}$,
further implies that, for any $N\in\mathbb{N}$,
$$
\sum_{j=-N}^{-1}2^{j(F+s-\frac{n}{2})q}
=\sum_{j=0}^{N-1}2^{(j-N)(F+s-\frac{n}{2})q}
\lesssim\left\|\vec t^{(N)}\right\|_{\dot b_{p,q}^s}^q
\sim1.
$$
Letting $N\to\infty$, we obtain
$\sum_{j=-\infty}^{-1}2^{j(F+s-\frac{n}{2})q}\lesssim1$
and hence $F>\frac{n}{2}-s$. This finishes the estimations of both $E$ and $F$ in this case.

\emph{Case 2)} $q=\infty$.
In this case, we first prove $E>s+\frac{n}{2}$.
Let $\vec t:=\{\vec t_Q\}_{Q\in\mathscr{Q}}$, where, for each $Q\in\mathscr{Q}$,
\begin{equation}\label{exa3}
\vec t_{Q}:=\begin{cases}
|Q|^{\frac{s}{n}+\frac12}\vec e&\text{if }\ell(Q)\leq1\text{ and }|x_Q|<1,\\
\vec{\mathbf{0}}&\text{otherwise}.
\end{cases}
\end{equation}
Then, by the definitions of both $\vec t$
and $\|\cdot\|_{\dot b_{p,\infty}^s}$, we have
$$
\left\|\vec t\right\|_{\dot b_{p,\infty}^s}
=\sup_{j\in\mathbb{Z}_+}\left(\sum_{Q\in\mathscr{Q}_j,\,|x_Q|<1}|Q|\right)^{\frac{1}{p}}
=\sup_{j\in\mathbb{Z}_+}\left(\sum_{k\in\mathbb{Z}^n,\,|k|<2^j}2^{-jn}\right)^{\frac{1}{p}}
\sim1.
$$
This, together with the assumption that $B$ is bounded on $\dot b^s_{p,q} $,
further implies that
\begin{align}\label{5.30x}
\infty&>\left\|B\vec t\right\|_{\dot b_{p,\infty}^s}
\geq\sup_{j\in\mathbb{Z}_+}2^{js}
\left[\sum_{Q\in\mathscr{Q}_j}|Q|^{1-\frac{p}{2}}
\left|\left(B\vec t\right)_Q\right|^p\right]^{\frac{1}{p}}\\
&>\sup_{j\in\mathbb{Z}_+}2^{js}
\left\{\sum_{Q\in\mathscr{Q}_j}|Q|^{1-\frac{p}{2}}
\left[\sum_{i=0}^j\sum_{R\in\mathscr{Q}_i,\,|x_R|<1}
\left(1+2^i|x_Q-x_R|\right)^{-D}
2^{(i-j)E}2^{-i(s+\frac{n}{2})}\right]^p\right\}^{\frac{1}{p}}\notag\\
&=\sup_{j\in\mathbb{Z}_+}
\left\{\sum_{Q\in\mathscr{Q}_j}|Q|
\left[\sum_{i=0}^j\sum_{R\in\mathscr{Q}_i,\,|x_R|<1}
\left(1+2^i|x_Q-x_R|\right)^{-D}
2^{(i-j)(E-s-\frac{n}{2})}\right]^p\right\}^{\frac{1}{p}}.\notag
\end{align}
Notice that, for any $i\in\mathbb Z_+$ and $x\in B(\mathbf{0},1)$,
by some geometrical observations, we find that
there exists $R\in\mathscr{Q}_i$ with $|x_R|<1$
such that $|x-x_R|<2^{-i}\sqrt{n}$, and hence
\begin{equation}\label{5.30y}
\sum_{R\in\mathscr{Q}_i,\,|x_R|<1}
\left(1+2^i|x-x_R|\right)^{-D}
\gtrsim1.
\end{equation}
Therefore, from \eqref{5.30x} and \eqref{5.30y}, it follows that
\begin{align}\label{est5}
\infty
&>\left\|B\vec t\right\|_{\dot b_{p,\infty}^s}
\gtrsim\sup_{j\in\mathbb{Z}_+}
\left\{\sum_{Q\in\mathscr{Q}_j,\,|x_Q|<1}|Q|
\left[\sum_{i=0}^j 2^{(i-j)(E-s-\frac{n}{2})}\right]^p\right\}^{\frac{1}{p}}\\
&\sim\sup_{j\in\mathbb{Z}_+}\sum_{i=0}^j 2^{(i-j)(E-s-\frac{n}{2})}
=\sup_{j\in\mathbb{Z}_+}\sum_{i=-j}^0 2^{i(E-s-\frac{n}{2})}
=\sum_{i=-\infty}^0 2^{i(E-s-\frac{n}{2})},\notag
\end{align}
and hence $E>s+\frac{n}{2}$.

Now, we show $F>\frac{n}{2}-s$.
Let $\vec t:=\{\vec t_Q\}_{Q\in\mathscr{Q}}$, where, for each $Q\in\mathscr{Q}$,
\begin{equation}\label{exa4}
\vec t_{Q}:=\begin{cases}\displaystyle
|Q|^{\frac{s}{n}+\frac12}\frac{1}{(1+|x_Q|)^{\frac{n+1}{p}}}\vec e
&\text{if }j\in\mathbb{Z}_+\text{ and }Q\in\mathscr{Q}_j,\\
\vec{\mathbf{0}}&\text{otherwise}.
\end{cases}
\end{equation}
Then, by \eqref{33z}, we obtain
$$
\left\|\vec t\right\|_{\dot b_{p,\infty}^s}
=\sup_{j\in\mathbb{Z}_+}\left[\sum_{Q\in\mathscr{Q}_j}|Q|\frac{1}{(1+|x_Q|)^{n+1}}\right]^{\frac{1}{p}}
\sim1.
$$
From the assumption that $(B\vec t)_{Q_{0,\mathbf{0}}}$ makes sense,
$D>J$, and \eqref{33z} again, we infer that
\begin{align*}
\infty>\sum_{R\in\mathscr{Q}}\left|b_{Q_{0,\mathbf{0}},R}\vec t_R\right|
=\sum_{i=0}^\infty2^{-i(F+s+\frac{n}{2})}
\sum_{R\in\mathscr{Q}_i}\frac{1}{(1+|x_R|)^{D+\frac{n+1}{p}}}
\sim\sum_{i=0}^\infty2^{-i(F+s-\frac{n}{2})}
\end{align*}
and hence $F>\frac{n}{2}-s$.

Finally, we prove $F>\frac{n}{p}-\frac{n}{2}-s$.
Let $\vec t:=\{\vec t_Q\}_{Q\in\mathscr{Q}}$, where, for any $Q\in\mathscr{Q}$,
$$
\vec t_{Q}:=\begin{cases}
|Q|^{\frac{s}{n}+\frac12-\frac{1}{p}}\vec e&\text{if }x_Q=\mathbf{0},\\
\vec{\mathbf{0}}&\text{otherwise}.
\end{cases}
$$
Then $\|\vec t\|_{\dot b_{p,\infty}^s}=1$.
By the assumption that $(B\vec t)_{Q_{0,\mathbf{0}}}$ makes sense again, we conclude that
$$
\infty
>\sum_{R\in\mathscr{Q}}\left|b_{Q_{0,\mathbf{0}},R}\vec t_R\right|
>\sum_{i=0}^\infty2^{-i(F+s+\frac{n}{2}-\frac{n}{p})},
$$
and hence $F>\frac{n}{p}-\frac{n}{2}-s$.
This finishes the estimations of both $E$ and $F$ in this case
and hence the proof of Lemma \ref{ad Besov sharp}.
\end{proof}

Next, we are able to characterize the sharp almost diagonal conditions of $\dot b_{\infty,q}^s$; note that both Theorem \ref{ad Besov} and the previous Lemma \ref{ad Besov sharp} dealt with $\dot b_{p,q}^s$ with $p\in(0,\infty)$.

\begin{lemma}\label{ad Besov sharp infty}
Let $s\in\mathbb R$, $q\in(0,\infty]$, and $D,E,F\in\mathbb{R}$.
Suppose that every $(D,E,F)$-almost diagonal matrix $B$ is bounded on $\dot b^s_{\infty,q}$.
Then $D>n,$ $E>\frac{n}{2}+s,$ and $F>\frac{n}{2}-s.$
\end{lemma}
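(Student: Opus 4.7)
The plan is to adapt the strategy of Lemma \ref{ad Besov sharp} by testing the boundedness of $B$ against the basic $(D,E,F)$-almost diagonal matrix $B^{DEF}$ from \eqref{bDEF} applied to specific sequences, now taking into account that for $p=\infty$ the norm at each scale $j$ reduces to $\|\vec t_j\|_{L^\infty}=\sup_{Q\in\mathscr{Q}_j}|Q|^{-1/2}|\vec t_Q|$. In each of the three estimates I will split according to whether $q<\infty$ or $q=\infty$, because the geometric-series test that yields strict inequalities in Lemma \ref{ad Besov sharp} collapses to a supremum in the latter case.

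For the estimate $D>n$ I would reuse the truncated level-$0$ test $\vec t^{(N)}_Q=\vec e\mathbf 1_{Q\in\mathscr Q_0,\,|x_Q|<N}$, which has $\dot b^s_{\infty,q}$-norm $1$ for every $N$ and satisfies $|(B\vec t^{(N)})_{Q_{0,\mathbf 0}}|\gtrsim\sum_{|l|<N/2}(1+|l|)^{-D}$; uniform boundedness in $N$ then forces $\sum_{l\in\mathbb Z^n}(1+|l|)^{-D}<\infty$, hence $D>n$.

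For the estimate $E>\frac{n}{2}+s$: when $q<\infty$, test against the single-cube example $\vec t_Q=|Q|^{s/n+1/2}\vec e\,\mathbf 1_{Q=Q_N}$ with $Q_N\in\mathscr Q_N$ centered at $\mathbf 0$ (unit norm); the unique $P\in\mathscr Q_j$ containing $\mathbf 0$ for each $j>N$ yields $2^{js}|P|^{-1/2}|(B\vec t)_P|\gtrsim 2^{(j-N)(s+n/2-E)}$, so summing $q$-th powers over $j>N$ demands strict $E>n/2+s$. When $q=\infty$ I would switch to the multi-scale example $\vec t_Q=|Q|^{s/n+1/2}\vec e\,\mathbf 1_{j_Q\ge 0,\,|x_Q|<c_n}$ (with a suitable $c_n\ge\sqrt n$) of unit $\dot b^s_{\infty,\infty}$-norm, and for $P\in\mathscr Q_j$ containing $\mathbf 0$ select, at each ancestor scale $N\in\{0,\dots,j\}$, the unique $R\in\mathscr Q_N$ containing $x_P$ to obtain
\begin{equation*}
2^{j(s+n/2)}|(B\vec t)_P|\gtrsim\sum_{N=0}^{j}2^{(j-N)(s+n/2-E)},
\end{equation*}
which blows up as $j\to\infty$ whenever $E\le n/2+s$.

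For the estimate $F>\frac{n}{2}-s$: when $q<\infty$, use $\vec t^{(N)}_Q=|Q|^{s/n+1/2}\vec e\,\mathbf 1_{Q\in\mathscr Q_N,\,|x_Q|<1}$ (unit norm). For each $j\in\{0,\dots,N-1\}$, the cube $P_j\in\mathscr Q_j$ containing $\mathbf 0$ collects contributions from $\sim 2^{(N-j)n}$ descendants $R\in\mathscr Q_N$ with $|x_R|<2^{-j-1}$, for all of which $(1+2^j|x_{P_j}-x_R|)^{-D}\gtrsim 1$; assembling the pieces gives
\begin{equation*}
2^{js}|P_j|^{-1/2}\bigl|(B\vec t^{(N)})_{P_j}\bigr|\gtrsim 2^{(j-N)(F+s-n/2)},
\end{equation*}
so $\|B\vec t^{(N)}\|^q_{\dot b^s_{\infty,q}}\gtrsim\sum_{k=-N}^{-1}2^{k(F+s-n/2)q}$, and boundedness in $N$ forces strict $F>n/2-s$. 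When $q=\infty$, the same counting collapses to a supremum and only gives $F\ge n/2-s$; I would instead take the multi-scale example above and exploit the required absolute convergence of
\begin{equation*}
(B\vec t)_{Q_{0,\mathbf 0}}=\sum_{R}b_{Q_{0,\mathbf 0},R}\vec t_R\gtrsim\sum_{N\ge 1}2^{-NF}2^{-N(s+n/2)}\sum_{R\in\mathscr Q_N,\,|x_R|<1}(1+|x_R|)^{-D}\sim\sum_{N\ge 1}2^{N(n/2-s-F)},
\end{equation*}
where the inner count $\sim 2^{Nn}$ follows from \eqref{33z}; convergence of this series is equivalent to $F>n/2-s$.

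The main obstacle is precisely securing the strict inequalities, particularly for $F>n/2-s$ when $q$ is small: the one-cube comparison at a single scale only yields the non-strict bound $F\ge n/2-s$. The decisive trick, mirroring Case 1 of Lemma \ref{ad Besov sharp}, is the multi-level accumulation in the third estimate, where one gathers $\sim 2^{(N-j)n}$ comparable contributions at each ancestor scale $j<N$; the resulting factor $2^{(N-j)n}$ is exactly what converts the naive $\ge$ into the required $>$ via the geometric-series criterion $\sum_{k=-N}^{-1}2^{k(F+s-n/2)q}\lesssim 1$ uniformly in $N$, and the corresponding multi-scale trick handles the $q=\infty$ corners in both the $E$- and $F$-bounds.
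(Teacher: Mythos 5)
Your proposal is correct, and the underlying test $B=B^{DEF}$ against nonnegative multiples of a fixed vector $\vec e$ is the same device the paper uses; but your choice of test sequences is genuinely different, so the proof follows a different route. The paper uses only two sequences: for $D$ and, when $q<\infty$, simultaneously for $E$ and $F$, the single-scale spread $t_Q=\mathbf 1_{\ell(Q)=1}$, for which \eqref{33z} gives $(Bt)_Q\sim 2^{-jE}$ at fine scales and $(Bt)_Q\sim 2^{j(F-n)}$ at coarse scales, so that both strict inequalities fall out of one geometric-series criterion; and, when $q=\infty$, the scale-invariant sequence $t_Q=|Q|^{s/n+1/2}$, for which absolute convergence of $(Bt)_{Q_{0,\mathbf 0}}$ simultaneously forces $E>\frac n2+s$ and $F>\frac n2-s$. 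You instead isolate each parameter with a tailored sequence (a truncated level-$0$ spread for $D$, a single cube at level $N$ for $E$, a spread of level-$N$ cubes near the origin for $F$ when $q<\infty$, and a cut-off multi-scale example with a blow-up argument over ancestors plus an absolute-convergence test at $Q_{0,\mathbf 0}$ when $q=\infty$), which is a direct adaptation of the mechanics in Lemma \ref{ad Besov sharp}. The ``multi-level accumulation'' factor $2^{(N-j)n}$ that you single out as the decisive step is, in the paper's argument, exactly the $2^{-jn}$ that \eqref{33z} produces from summing a single level-$0$ spread against a coarse cube; the two counts are the same idea viewed from opposite ends of the scale axis. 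Your version is longer but more transparent about where each strict inequality comes from; the paper's is more economical since it derives several constraints from a single evaluation of $(B^{DEF}t)_Q$ across all scales.
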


\begin{proof}
We consider the $(D,E,F)$-almost diagonal matrix
$B^{DEF}=\{b^{DEF}_{Q,R}\}_{Q,R\in\mathscr{Q}}$ from \eqref{bDEF}.
We first estimate the range of $D$.
Let $t:=\{t_Q\}_{Q\in\mathscr{Q}}$, where, for any $Q\in\mathscr{Q}$,
\begin{equation}\label{exa5}
t_Q:=\begin{cases}
1&\text{if }\ell(Q)=1,\\
0&\text{otherwise}.
\end{cases}
\end{equation}
Then, from the definitions of both $t$ and $\|\cdot\|_{\dot b^s_{\infty,q}}$,
we deduce that, when $q\in(0,\infty)$,
$$
\|t\|_{\dot b^s_{\infty,q}}
=\left[\sum_{j\in\mathbb{Z}}2^{j(s+\frac{n}{2})q}
\sup_{Q\in\mathscr{Q}_j}|t_Q|^q\right]^{\frac{1}{q}}
=1
$$
and
$$
\|t\|_{\dot b^s_{\infty,\infty}}
=\sup_{Q\in\mathscr{Q}}|Q|^{-(\frac{s}{n}+\frac12)}|t_Q|
=1.
$$
Thus, by the boundedness of $B$ on $\dot b^s_{\infty,q}$,
we find that $(Bt)_{Q_{0,\mathbf{0}}}$ makes sense and
$$
\infty
>\sum_{R\in\mathscr{Q}}b_{Q_{0,\mathbf{0}},R}t_R
=\sum_{k\in\mathbb{Z}^n}\frac{1}{(1+|k|)^D}.
$$
This further implies that $D>n$.

Now, we estimate the ranges of both $E$ and $F$ by considering the following two cases on $q$.

\emph{Case 1)} $q\in(0,\infty)$.
In this case, let $t$ be the same as in \eqref{exa5}.
Then, from $D>n$ and \eqref{33z},
we infer that, for any $j\in\mathbb{Z}_+$ and $Q\in\mathscr{Q}_j$,
$$
(Bt)_Q
=\sum_{R\in\mathscr{Q}}b_{Q,R}t_R
=\sum_{k\in\mathbb{Z}^n}\frac{1}{(1+|x_Q-k|)^D}2^{-jE}
\sim2^{-jE}
$$
and, for any $j\in\mathbb{Z}$ with $j\leq-1$ and $Q\in\mathscr{Q}_j$,
$$
(Bt)_Q
=\sum_{R\in\mathscr{Q}}b_{Q,R}t_R
=\sum_{k\in\mathbb{Z}^n}\frac{1}{(1+2^j|x_Q-k|)^D}2^{jF}
\sim2^{j(F-n)}.
$$
These further imply that
\begin{align*}
\infty
&>\|Bt\|_{\dot b^s_{\infty,q}}^q
\sim\sum_{j=0}^{\infty}2^{-j(E-s-\frac{n}{2})q}
+\sum_{j=-\infty}^{-1}2^{j(F+s-\frac{n}{2})q},
\end{align*}
and hence $E>\frac{n}{2}+s$ and $F>\frac{n}{2}-s$.
This finishes the estimations of both $E$ and $F$ in this case.

\emph{Case 2)} $q=\infty$.
In this case, Let $t:=\{t_Q\}_{Q\in\mathscr{Q}}$, where, for any $Q\in\mathscr{Q}$,
$t_{Q}:=|Q|^{\frac{s}{n}+\frac12}.$
Then, using the definitions of both $t$ and $\|\cdot\|_{\dot b^s_{\infty,\infty}}$, we obtain
$\|t\|_{\dot b^s_{\infty,\infty}}
=\sup_{Q\in\mathscr{Q}}|Q|^{-(\frac{s}{n}+\frac12)}|t_Q|
=1.$
Thus, by the boundedness of $B$ on $\dot b^s_{\infty,q}$,
we conclude that $(Bt)_{Q_{0,\mathbf{0}}}$ makes sense.
This, combined with $D>n$ and \eqref{33z}, further implies that
\begin{align*}
\infty
&>\sum_{R\in\mathscr{Q}}b_{Q_{0,\mathbf{0}},R}t_R\\
&=\sum_{i=-\infty}^0\sum_{k\in\mathbb{Z}^n}
\frac{1}{(1+|k|)^D}2^{i(E-s-\frac{n}{2})}
+\sum_{i=1}^{\infty}\sum_{k\in\mathbb{Z}^n}
\frac{1}{(1+2^{-i}|k|)^D}2^{-i(F+s+\frac{n}{2})}\\
&\sim\sum_{i=-\infty}^02^{i(E-s-\frac{n}{2})}
+\sum_{i=1}^{\infty}2^{-i(F+s-\frac{n}{2})},
\end{align*}
and hence $E>\frac{n}{2}+s$ and $F>\frac{n}{2}-s$.
This finishes the estimations of both $E$ and $F$ in this case
and hence the proof of Lemma \ref{ad Besov sharp infty}.
\end{proof}

From Theorem \ref{ad FJ-YY}(i) and Lemmas \ref{ad Besov sharp} and \ref{ad Besov sharp infty},
we immediately deduce the following conclusion concerning almost diagonal operators on the full scale of unweighted Besov sequence spaces; we omit the details.

\begin{theorem}
Let $s\in\mathbb R$, $p,q\in(0,\infty]$, and $D,E,F\in\mathbb{R}$.
Then every $(D,E,F)$-almost diagonal matrix $B$ is bounded on $\dot b^s_{p,q}$
if and only if
$D>J,$ $E>\frac{n}{2}+s,$ and $F>J-\frac{n}{2}-s,$
where $J:=\frac{n}{\min\{1,p\}}$.
\end{theorem}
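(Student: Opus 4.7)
The plan is to assemble the biconditional from the three preceding results, splitting both implications according to whether $p$ is finite or equal to $\infty$. This is essentially a bookkeeping proof; the substantive work has already been done in Theorem \ref{ad FJ-YY}(i), Lemma \ref{ad Besov sharp}, and Lemma \ref{ad Besov sharp infty}.

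For the sufficient (``if'') direction, suppose that $B$ is $(D,E,F)$-almost diagonal with $D>J$, $E>\frac{n}{2}+s$, and $F>J-\frac{n}{2}-s$. If $p\in(0,\infty)$, then these three inequalities are exactly the hypotheses of Theorem \ref{ad FJ-YY}(i) (with $\dot a^s_{p,q}=\dot b^s_{p,q}$), which immediately gives the boundedness of $B$ on $\dot b^s_{p,q}$. The only case that falls outside the literal statement of Theorem \ref{ad FJ-YY}(i) is $p=\infty$, in which $J=n$ and the bounds read $D>n$, $E>\frac{n}{2}+s$, $F>\frac{n}{2}-s$. As noted in Remark \ref{rem FJ-YY}, this case was observed in \cite[p.\,81]{fj90} to follow by a routine extension; alternatively, one can rerun the argument of Lemma \ref{ad prelim} and Theorem \ref{ad Besov} with $W\equiv I_m$ and the $\ell^q L^\infty$ quasi-norm, which behaves well because Lemma \ref{ad B1}'s bounded-overlap step becomes trivial once the integral average is replaced by an essential supremum over $B(x,\rho)$.

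For the necessary (``only if'') direction, suppose that every $(D,E,F)$-almost diagonal matrix is bounded on $\dot b^s_{p,q}$. If $p\in(0,\infty)$, Lemma \ref{ad Besov sharp} gives exactly $D>J$, $E>\frac{n}{2}+s$, and $F>J-\frac{n}{2}-s$. If $p=\infty$, Lemma \ref{ad Besov sharp infty} gives $D>n$, $E>\frac{n}{2}+s$, and $F>\frac{n}{2}-s$, and since $J=n/\min\{1,\infty\}=n$ one checks that $n=J$ and $\frac{n}{2}-s=J-\frac{n}{2}-s$, so these bounds again match the statement.

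The main (minor) obstacle is just to verify that the parameter ranges in the two sharpness lemmas glue cleanly onto the statement at the endpoint $p=\infty$ and to dispatch the $p=\infty$ sufficiency, which is not formally in Theorem \ref{ad FJ-YY}(i) but is an easy extension. Once these small reconciliations are made, no further computation is needed and the details may be omitted.
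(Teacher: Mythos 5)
Your proof is correct and follows essentially the same route as the paper, which states that the theorem follows immediately from Theorem \ref{ad FJ-YY}(i), Lemma \ref{ad Besov sharp}, and Lemma \ref{ad Besov sharp infty} and omits the details. You split both implications according to whether $p$ is finite or infinite, exactly as the paper intends, and you also correctly flag the one wrinkle that the paper glosses over: Theorem \ref{ad FJ-YY}(i) is stated only for $p\in(0,\infty)$, so the sufficiency at $p=\infty$ needs a word of justification (either the classical extension observed in \cite[p.\,81]{fj90}, or a quick rerun of the argument in the $\ell^qL^\infty$ setting where the averaging estimate of Lemma \ref{ad B1} reduces to $\fint_{B(x,\rho)}|\vec t_i|\le\|\vec t_i\|_{L^\infty}$). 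The reconciliation at $p=\infty$ (i.e., $J=n$ so that $J-\frac n2-s=\frac n2-s$) is also correct, so the parameter ranges in Lemmas \ref{ad Besov sharp} and \ref{ad Besov sharp infty} glue cleanly onto the statement.
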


\section{Boundedness on Triebel--Lizorkin Sequence Spaces}\label{sec ad TL}

We then turn to a more complicated case, namely Triebel--Lizorkin spaces.
As in Section \ref{sec ad Besov} in the case of Besov spaces,
the goal of this section is to show the special case of Theorem \ref{ad BF}
for the Triebel--Lizorkin sequence space $\dot f_{p,q}^s(W)=\dot f_{p,q}^{s,0}(W)$,
while at the same time developing tools that will allow us to
finish the proof of Theorem \ref{ad BF} in the following section.

We need the following well-known fact about shifted systems of dyadic cubes of $\mathbb R^n$
(see, for instance, \cite[Lemma 3.2.26]{hnvw}).

\begin{lemma}\label{shifted}
On $\mathbb R^n$, there exist $3^n$ shifted systems of dyadic cubes
$\mathscr{Q}^\alpha$, $\alpha\in\{1,\ldots,3^n\}$,
such that, for any cube $Q\subset\mathbb R^n$,
there exist $\alpha\in\{1,\ldots,3^n\}$ and a cube $S\in\mathscr{Q}^\alpha$
such that $Q\subset S$ and $\ell(S)\in(\frac32\ell(Q),3\ell(Q)]$.
\end{lemma}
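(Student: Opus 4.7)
My plan is to prove Lemma \ref{shifted} via the classical construction of shifted (or ``adjacent'') dyadic systems, variants of which go back to Okikiolu and Mei. The proof has three steps: construct the $3^n$ systems explicitly, observe each is nested, and verify the covering property one coordinate at a time.

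For the construction, I would set, for each parameter $\alpha \in \{0, \tfrac13, \tfrac23\}^n$,
$\mathscr{Q}^\alpha := \{2^{-k}(m + (-1)^k \alpha) + [0, 2^{-k})^n:\ k \in \mathbb Z,\ m \in \mathbb Z^n\}$,
and relabel these $3^n$ choices of $\alpha$ by the index set $\{1,\ldots,3^n\}$. A short arithmetic check shows that the difference between the corner of a scale-$2^{-k}$ cube of $\mathscr{Q}^\alpha$ and the corner of a scale-$2^{-k-1}$ cube of $\mathscr{Q}^\alpha$ always lies in $2^{-k-1}\mathbb Z^n$, crucially because $3\alpha \in \mathbb Z^n$. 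Hence each $\mathscr{Q}^\alpha$ is genuinely a nested dyadic system.

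For the covering property, given any cube $Q$, I would pick the unique $k \in \mathbb Z$ for which $L := 2^{-k} \in (\tfrac32 \ell(Q), 3\ell(Q)]$, which exists since consecutive dyadic scales differ by a factor of $2$. Then any cube $S \in \mathscr{Q}^\alpha$ at level $k$ automatically has $\ell(S) = L \in (\tfrac32 \ell(Q), 3\ell(Q)]$, so it suffices to find some $\alpha$ for which a cube $S \in \mathscr{Q}^\alpha$ at level $k$ contains $Q$. This reduces coordinate-wise to the one-dimensional claim: for any interval $I$ of length $\ell < 2L/3$, at least one of the three period-$L$ grids with shifts $s \in \{0, L/3, 2L/3\}$ contains $I$ in a single grid interval. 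This is verified by checking that the ``bad sets'' (of positions $a$ modulo $L$ where shift $s$ fails to cover $I = [a, a+\ell)$) are three arcs of length $\ell$ on the circle $\mathbb R/L\mathbb Z$ ending at $0$, $L/3$, $2L/3$, and that their common intersection is empty precisely when $\ell < 2L/3$. Choosing $\alpha_i$ coordinate by coordinate according to this one-dimensional conclusion yields the required cube $S \in \mathscr{Q}^\alpha$.

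The main conceptual point I expect to require the most care is the scale-alternating sign $(-1)^k$ in the construction of $\mathscr{Q}^\alpha$: without this alternation, successive dyadic scales would demand infinitely many distinct shift vectors to preserve nesting, and the family would not close up with only $3^n$ systems. Once this detail is in place, both the dyadic scale selection in the second step and the one-dimensional covering computation in the third step are routine, so I anticipate no further obstacle.
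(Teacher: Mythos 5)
Your proof is correct and employs exactly the construction that the paper exhibits in the remark following Lemma \ref{shifted}, namely the $3^n$ systems $\mathscr{Q}^\gamma$ with the scale-alternating sign $(-1)^j$; the paper itself gives no proof but only cites \cite[Lemma 3.2.26]{hnvw}. The nesting verification via $3\alpha\in\mathbb Z^n$, the choice of the unique dyadic $L=2^{-k}\in(\frac32\ell(Q),3\ell(Q)]$, and the one-dimensional covering argument via the three bad arcs of length $\ell$ ending at $0$, $L/3$, $2L/3$ on $\mathbb R/L\mathbb Z$ are all sound. Two tiny nits that do not affect the argument: the three equally spaced bad arcs have empty common intersection precisely when $\ell\leq 2L/3$ rather than $\ell<2L/3$, and since the paper allows $Q$ to be closed while the shifted dyadic cubes are half-open, the bad arcs should really be taken half-closed (boundary points $a+\ell$ also count as bad), which still gives arcs of length $\ell$ and the same conclusion under the strict bound $\ell(Q)<2L/3$ that your scale selection ensures.
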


\begin{remark}
For each $\gamma\in\{0,\frac13,\frac23\}^n$, let
$\mathscr{Q}^{\gamma}:=\bigcup_{j\in\mathbb{Z}}\mathscr{Q}^{\gamma}_j,$
where, for each $j\in\mathbb Z$,
$$
\mathscr{Q}^{\gamma}_j
:=\left\{2^{-j}\left([0,1)^n+k+(-1)^j\gamma\right):\ k\in\mathbb{Z}^n\right\}.
$$
Then $\{\mathscr{Q}^{\alpha}\}_{\alpha=1}^{3^n}$ in Lemma \ref{shifted}
can be chosen to be a rearrangement of $\{\mathscr{Q}^{\gamma}\}_{\gamma\in\{0,\frac13,\frac23\}^n}$.
\end{remark}

For any $j\in\mathbb{Z}$ and any nonnegative measurable function
$f$ on $\mathbb R^n$ or any $f\in L^1_{\mathop\mathrm{loc}}$, let
\begin{equation}\label{Ej}
E_j (f):=\sum_{Q\in\mathscr{Q}_j}
\left[\fint_Q f(x)\,dx\right]\mathbf{1}_Q.
\end{equation}

The following lemma is just \cite[Corollary 3.8]{fr21}.

\begin{lemma}\label{46x}
Let $p\in(0,\infty)$, $q\in(0,\infty]$, $W\in A_p$,
and $\{A_Q\}_{Q\in\mathscr{Q}}$ be a sequence of
reducing operators of order $p$ for $W$.
For any $j\in\mathbb{Z}$, let
$\gamma_j:=\sum_{Q\in\mathscr{Q}_j}
\|W^{\frac{1}{p}}A_Q^{-1}\|\mathbf{1}_Q.$
Then there exists a positive constant $C$ such that,
for any sequence $\{f_j\}_{j\in\mathbb{Z}}$ of nonnegative measurable functions
on $\mathbb R^n$ or for any $\{f_j\}_{j\in\mathbb{Z}}\subset L^1_{\mathop\mathrm{loc}}$,
$$
\left\|\left\{\gamma_jE_j\left(f_j\right)\right\}_{j\in\mathbb Z}\right\|_{L^p\ell^q}
\leq C\left\|\left\{E_j\left(f_j\right)\right\}_{j\in\mathbb Z}\right\|_{L^p\ell^q},
$$
where $E_j$ for any $j\in\mathbb Z$ is the same as in \eqref{Ej}.
\end{lemma}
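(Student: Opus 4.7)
The plan is a two-step reduction: first a local cubewise $L^p$ estimate, then a lift to the mixed-norm $L^p\ell^q$ setting. The starting observation is that, by construction, $g_j := E_j(f_j)$ is constant on each cube $Q \in \mathscr Q_j$, so for any such cube one has
\begin{equation*}
\int_Q (\gamma_j g_j)^p \, dx
= |g_j|_Q|^p \int_Q \left\|W^{\frac{1}{p}}(x) A_Q^{-1}\right\|^p \, dx.
\end{equation*}
Hence everything reduces to establishing the uniform bound $\fint_Q \|W^{\frac{1}{p}}(y) A_Q^{-1}\|^p \, dy \lesssim 1$ for every cube $Q$. This in turn follows from the defining property of a reducing operator (Definition \ref{reduce}) by testing against an orthonormal basis $\{\vec e_i\}_{i=1}^m$ of $\mathbb C^m$: for each $i$, the reducing operator inequality yields $\fint_Q |W^{\frac{1}{p}}(y) A_Q^{-1}\vec e_i|^p\,dy \sim |A_Q A_Q^{-1} \vec e_i|^p = 1$, and the operator norm $\|\cdot\|$ is controlled by such pieces via equivalence of norms on $M_m(\mathbb C)$. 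Combining these yields the cubewise inequality $\int_Q (\gamma_j g_j)^p \, dx \lesssim \int_Q g_j^p \, dx$ for each $Q \in \mathscr Q_j$.

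Summing this cubewise estimate over $Q \in \mathscr Q_j$ gives $\|\gamma_j E_j(f_j)\|_{L^p} \lesssim \|E_j(f_j)\|_{L^p}$ for each $j$, so the diagonal case $q = p$ is immediate after a further sum over $j$. The same argument also shows that each operator $T_j : h \mapsto \gamma_j E_j(h)$ is bounded on $L^p$ with a uniform constant, which will be the starting ingredient for the vector-valued extension.

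The main obstacle is the passage to general $q \neq p$, since pointwise $\gamma_j g_j$ can be much larger than $g_j$ (only the cubewise $L^p$ norms are comparable), so no direct pointwise maximal-function domination is available. The plan for $p,q \in (1,\infty)$ is to argue by duality: testing against $\{h_j\}\in L^{p'}\ell^{q'}$, H\"older's inequality on each $Q\in\mathscr Q_j$ together with the local bound on $\|W^{\frac1p}A_Q^{-1}\|^p$ yields
\begin{equation*}
\int_Q \gamma_j g_j h_j\,dx \lesssim |g_j|_Q|\,|Q|^{1/p}\left(\int_Q|h_j|^{p'}\right)^{1/p'} = \int_Q g_j \left(E_j|h_j|^{p'}\right)^{1/p'}dx,
\end{equation*}
so after summing over $Q$ and $j$, the claim reduces to a Fefferman--Stein-type bound for the operator $\{h_j\}\mapsto \{(E_j|h_j|^{p'})^{1/p'}\}$ on $L^{p'}\ell^{q'}$, which is a standard vector-valued dyadic inequality. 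The corner cases $p\le 1$ and $q\in\{1,\infty\}$ are handled by combining Lemma \ref{famous 2} (for quasi-norms when $p\le 1$) with monotonicity/limiting arguments from the open range. I expect the most delicate endpoint to be $q=\infty$, where the norm becomes $\|\sup_j \gamma_j E_j(f_j)\|_{L^p}$ and $\sup_j \gamma_j$ is not in general a bounded maximal function; here the argument must directly exploit the cube-by-cube constancy of $E_j(f_j)$ together with the scale-by-scale integrated control on $\gamma_j$.
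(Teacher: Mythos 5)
The paper does not prove this lemma itself --- it is cited verbatim from Frazier--Roudenko \cite[Corollary 3.8]{fr21} --- so there is no internal proof to compare against; I will assess your argument on its own terms.

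Your opening local estimate, namely that $\fint_Q\|W^{\frac1p}(y)A_Q^{-1}\|^p\,dy\lesssim 1$ for each cube $Q$, is correct (test the reducing-operator relation on a basis and use finite-dimensional norm equivalence), and this does dispose of $q=p$. The genuine gap is in the duality step. After H\"older on each cube you reduce to $\|\{(E_j|h_j|^{p'})^{1/p'}\}_j\|_{L^{p'}\ell^{q'}}\lesssim\|\{h_j\}_j\|_{L^{p'}\ell^{q'}}$, which you describe as ``a standard vector-valued dyadic inequality.'' It is not: since $(E_j|h_j|^{p'})^{1/p'}\leq M_{p'}(h_j)$ and the Fefferman--Stein inequality for $M_{p'}$ on $L^u\ell^v$ requires $u>p'$ and $v>p'$, you are sitting exactly at the critical endpoint $u=p'$, and the estimate is in fact \emph{false} when $q<p$. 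For a counterexample take $h_j=\mathbf 1_{Q_M}$ for $j=1,\dots,N$ with $Q_M\in\mathscr{Q}_M$, $N\leq M$, and $\epsilon:=|Q_M|$: then $E_j\mathbf 1_{Q_M}=2^{(j-M)n}\mathbf 1_{Q_j}$ for $j\leq M$, and a direct computation gives $\|\{(E_j|h_j|^{p'})^{1/p'}\}\|_{L^{p'}\ell^{q'}}\sim(N\epsilon)^{1/p'}$ while $\|\{h_j\}\|_{L^{p'}\ell^{q'}}=N^{1/q'}\epsilon^{1/p'}$, so the ratio $N^{1/p'-1/q'}\to\infty$ whenever $q'>p'$, i.e.\ $q<p$.

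The missing idea is the self-improvement (reverse H\"older) property of the multipliers $\gamma_j$: one actually has $\fint_Q\gamma_j^{p(1+\delta)}\,dy\lesssim 1$ for some $\delta=\delta(W)>0$ and all $Q\in\mathscr{Q}_j$, which is exactly the hypothesis of Nazarov's Lemma \ref{Nazarov} in this paper. That extra $\delta$ of integrability is what lets the cube-wise H\"older land strictly below the critical exponent, so that the resulting maximal-type operator is bounded. Your argument uses only the bare normalization $\fint_Q\gamma_j^p\lesssim 1$ coming from Definition \ref{reduce}, and with only that input the duality argument cannot close for $q<p$. In addition, your sketch for $p\le1$ and $q=\infty$ is a deferral rather than a proof: duality is unavailable for $p\le 1$, the relevant mixed-norm spaces are not obtained from the open range by any limiting argument, and the $q=\infty$ case (where you correctly identify that $\sup_j\gamma_j$ is not controllable) needs its own argument rather than the hope that ``cube-by-cube constancy'' rescues it.
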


We also need the following \cite[Theorem 3.7(ii)]{fr21}
which the authors of \cite{fr21} attributed to F. Nazarov.

\begin{lemma}\label{Nazarov}
Let $p\in(1,\infty)$ and $q\in[1,\infty]$,
and suppose that $\{\gamma_j\}_{j\in\mathbb Z}$
is a sequence of non-negative measurable functions on $\mathbb R^n$ satisfying that,
for some $\delta\in(0,\infty)$,
\begin{equation*}
\sup_{Q\in\mathscr{Q}}\fint_Q
\sup_{j\in\mathbb{Z},\,2^{-j}\leq\ell(Q)}
\left[\gamma_j(x)\right]^{p(1+\delta)}\,dx<\infty.
\end{equation*}
Then there exists a positive constant $C$ such that,
for any sequence $\{f_j\}_{j\in\mathbb Z}$ of nonnegative measurable
functions on $\mathbb R^n$ or for any $\{f_j\}_{j\in\mathbb{Z}}
\subset L^1_{\mathop\mathrm{loc}}$,
\begin{equation*}
\left\|\left\{\gamma_jE_j\left(f_j\right)\right\}_{j\in\mathbb Z}\right\|_{L^p(\ell^q)}
\leq C\|\{f_j\}_{j\in\mathbb Z}\|_{L^p(\ell^q)},
\end{equation*}
where $E_j(f_j)$ for each $j\in\mathbb Z$ is the same as in \eqref{Ej}.
\end{lemma}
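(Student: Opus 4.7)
The plan is to extend a single-index (fixed $j$) bound to the full vector-valued setting $L^p(\ell^q)$, exploiting the extra integrability margin $\delta>0$ in the hypothesis; the argument follows the Nazarov-type template reported in \cite[Theorem 3.7(ii)]{fr21}.

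First, I would establish the ``diagonal'' single-scale inequality $\|\gamma_j E_j(f_j)\|_{L^p}\lesssim\|f_j\|_{L^p}$ uniformly in $j\in\mathbb Z$. Since $E_j(f_j)$ is constant on each $Q\in\mathscr Q_j$, H\"older's and Jensen's inequalities give
\[
\int_Q\gamma_j^p(x)\bigl[E_j(f_j)(x)\bigr]^p\,dx
\le\left[\fint_Q\gamma_j^{p(1+\delta)}(x)\,dx\right]^{\frac1{1+\delta}}
\int_Qf_j^p(y)\,dy,
\]
and summing over $Q\in\mathscr Q_j$ while applying the hypothesis at the scale $\ell(Q)=2^{-j}$ yields the bound. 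This already settles the case $q=p$ of the lemma.

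For general $q\in[1,\infty]$ I would invoke a Calder\'on--Zygmund stopping-time decomposition driven by the ``Carleson data'' $\widetilde\gamma_Q(x):=\sup_{k:\,2^{-k}\le\ell(Q)}\gamma_k(x)$. Starting from a generic dyadic $Q_0$, one flags a maximal sub-cube $R\subset Q_0$ as a first-generation stopping cube once $\fint_R\widetilde\gamma_R^{p(1+\delta)}$ exceeds a large multiple of the corresponding quantity for $Q_0$. A Kolmogorov-type weak-type estimate, enabled by the $\delta>0$ slack in the exponent, then shows that the union of first-generation stopping cubes occupies a fixed fraction $\theta\in(0,1)$ of $|Q_0|$, so the resulting collection of all stopping cubes is sparse (Carleson). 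On each non-stopped region, $\gamma_j$ is pointwise dominated by a constant (namely the parent generation's value), reducing the weighted estimate there to the classical unweighted dyadic Fefferman--Stein vector-valued maximal inequality, which is known for $p\in(1,\infty)$ and $q\in(1,\infty]$; the remaining endpoint $q=1$ is handled either directly by the same stopping argument or by duality against $L^{p'}(\ell^\infty)$.

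The main obstacle is that the hypothesis is only an averaged (Carleson-type) condition at all dyadic scales; in particular one has no pointwise control of $\sup_j\gamma_j(x)$, so the naive approach of absorbing $\gamma_j$ into a weighted Lebesgue norm and invoking the unweighted Fefferman--Stein inequality fails outright. The extra $\delta>0$ of integrability is precisely what powers the Kolmogorov step and the resulting geometric decay $\theta^k$ across generations, while the technically delicate point is propagating a single stopping tree uniformly across all indices $j$ simultaneously so as to obtain an $L^p(\ell^q)$ bound rather than merely the diagonal $L^p(\ell^p)$ bound of the first step.
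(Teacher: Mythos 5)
The paper does not prove this lemma: it is quoted directly from \cite[Theorem 3.7(ii)]{fr21}, where it is attributed to F.~Nazarov, and the present article uses it as a black box (see the sentence immediately preceding the statement). So there is no ``paper's proof'' to compare against, and your proposal has to be judged on its own.

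Your diagonal step is correct and cleanly settles $q=p$: since $E_j(f_j)$ is constant on each $Q\in\mathscr Q_j$, H\"older in $x$ with exponents $1+\delta$ and its conjugate, Jensen in $y$, and the Carleson hypothesis at the scale $\ell(Q)=2^{-j}$ (where the inner supremum contains $\gamma_j$) give $\|\gamma_jE_j(f_j)\|_{L^p}\lesssim\|f_j\|_{L^p}$ uniformly in $j$. The stopping-time sketch for general $q$, however, has two concrete gaps. First, the stopping rule you describe is vacuous as stated: the hypothesis gives $\fint_R\widetilde\gamma_R^{\,p(1+\delta)}\le C_0$ for \emph{every} dyadic $R$, so $\fint_R\widetilde\gamma_R^{\,p(1+\delta)}$ can never exceed a ``large multiple'' of the corresponding quantity for $Q_0$ once that multiple pushes past $C_0/\fint_{Q_0}\widetilde\gamma_{Q_0}^{\,p(1+\delta)}$; to produce a sparse family one must instead run the Calder\'on--Zygmund selection on the dyadic maximal function of the fixed function $\widetilde\gamma_{Q_0}^{\,p(1+\delta)}$ (not of the $R$-dependent $\widetilde\gamma_R$), which is a different rule and must be spelled out. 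Second, and more seriously, even with a well-posed averaged stopping rule, the claim that $\gamma_j$ is ``pointwise dominated by a constant'' on the unstopped region does not follow: stopping on averages of $\widetilde\gamma^{\,p(1+\delta)}$ never produces pointwise bounds on $\gamma_j$, and without a pointwise bound you cannot absorb $\gamma_j$ into a constant and fall back on the unweighted Fefferman--Stein inequality. The correct mechanism must exploit the averaged $L^{p(1+\delta)}$ bound \emph{as an averaged bound}, typically through H\"older against the leftover $L^{(p(1+\delta))'}$-integrability of the data. Your parenthetical remark about $q=1$ via duality against $L^{p'}(\ell^\infty)$ actually illustrates this: the dualized operator is $g\mapsto E_j(\gamma_jg)$ (not $g\mapsto\gamma_jE_j(g)$), and one has the pointwise bound $E_j(\gamma_jg)\lesssim M_r g$ with $r:=(p(1+\delta))'<p'$ by H\"older and the Carleson hypothesis, after which the $L^{p'}$-boundedness of the $r$-maximal operator finishes $q=1$; but this is a completely different route from the corona/sparse plan you describe, and to cover all $q\in(1,\infty]$ by this H\"older-plus-Fefferman--Stein mechanism one runs into the additional constraint $q'>r$, which fails for $q$ large. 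So neither half of the plan, as written, closes the argument for general $q$.
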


{\color{red}We are now ready to estimate the right-hand side of the Key Lemma \ref{ad prelim} in Triebel--Lizorkin sequence spaces.}

\begin{lemma}\label{ad F1}
Let $p\in(0,\infty)$, $q\in(0,\infty]$, $W\in A_p$, and $a\in[0,p\wedge q\wedge 1)$.
Then there exists a positive constant $C$ such that,
for any $k,l\in\mathbb Z$ and $\vec{t}\in\dot f^0_{p,q}(W)$,
\begin{equation*}
\left\|\left\{\left[\fint_{B(\cdot,2^{l+k_+-i})}
\left|W^{\frac{1}{p}}(\cdot)\vec{t}_i(y)\right|^a\,dy\right]^{\frac{1}{a}}
\right\}_{i\in\mathbb Z}\right\|_{L^p\ell^q}
\leq C\left\|\vec{t}\right\|_{\dot f^0_{p,q}(W)},
\end{equation*}
where $\vec{t}_i$ for any $i\in\mathbb Z$ is the same as in \eqref{vec tj}.
\end{lemma}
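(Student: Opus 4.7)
The plan is to follow the blueprint of Lemma \ref{ad B1}, but the $L^p\ell^q$ ordering (as opposed to $\ell^q L^p$ in Lemma \ref{ad B1}) makes the direct $A_p$-Fubini argument unavailable; the substitute will combine shifted dyadic systems (Lemma \ref{shifted}), matrix reducing operators, and the Fefferman--Stein surrogate of F.~Nazarov (Lemma \ref{Nazarov}). Fix $k,l$, set $\rho_i:=2^{l+k_+-i}$, and pick $j_i\in\mathbb Z$ with $2^{-j_i}\sim\rho_i$. By Lemma \ref{shifted}, for every $x\in\mathbb R^n$ there exists $\alpha\in\{1,\ldots,3^n\}$ such that the unique $S^\alpha_{j_i}(x)\in\mathscr{Q}^\alpha_{j_i}$ containing $x$ also contains $B(x,\rho_i)$ with comparable measure; summing over the $3^n$ shifted systems and writing $E^\alpha_j(f)(x):=\fint_{S^\alpha_j(x)}f(y)\,dy$, we obtain
\begin{equation*}
\fint_{B(x,\rho_i)}\left|W^{1/p}(x)\vec{t}_i(y)\right|^a\,dy
\;\lesssim\;\sum_{\alpha=1}^{3^n}E^\alpha_{j_i}\!\bigl(\left|W^{1/p}(x)\vec{t}_i(\cdot)\right|^a\bigr)(x).
\end{equation*}

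To decouple the $x$- and $y$-dependencies, introduce reducing operators $A^\alpha_S$ of order $p$ for $W$ on each (shifted) cube $S\in\mathscr{Q}^\alpha$ and split
\begin{equation*}
\left|W^{1/p}(x)\vec{t}_i(y)\right|^a
\;\leq\;\bigl\|W^{1/p}(x)(A^\alpha_{S^\alpha_{j_i}(x)})^{-1}\bigr\|^a
\cdot\bigl|A^\alpha_{S^\alpha_{j_i}(x)}\vec{t}_i(y)\bigr|^a.
\end{equation*}
Setting $\gamma^\alpha_j(x):=\|W^{1/p}(x)(A^\alpha_{S^\alpha_j(x)})^{-1}\|$ and $F_i(y):=|A_i(y)\vec{t}_i(y)|^a$, the main technical step is to prove that, after averaging over $y\in S^\alpha_{j_i}(x)$, the second factor is dominated, \emph{uniformly in} $k,l$, by $E^\alpha_{j_i}(F_i)(x)$; that is,
\begin{equation*}
\Bigl[\fint_{B(x,\rho_i)}\left|W^{1/p}(x)\vec{t}_i(y)\right|^a\,dy\Bigr]^{1/a}
\;\lesssim\;\sum_{\alpha=1}^{3^n}\gamma^\alpha_{j_i}(x)\,\bigl[E^\alpha_{j_i}(F_i)(x)\bigr]^{1/a}.
\end{equation*}
A pointwise comparison of $A^\alpha_{S^\alpha_{j_i}(x)}$ and the standard $A_R$ (for $R\in\mathscr{Q}_i$, $R\cap S^\alpha_{j_i}(x)\ne\varnothing$) via Lemma \ref{22 precise} fails here because it would produce a scale growth factor $(\ell(S)/\ell(R))^{\widetilde d/p'}\sim 2^{(l+k_+)\widetilde d/p'}$; the remedy is to exploit the averaged, $L^p$-mean definition of reducing operators together with the scalar $A_p$-doubling of $z\mapsto|W^{1/p}(z)\vec{t}_R|^p$ to absorb the scale ratio inside the $y$-average.

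Finally, raising to the $a$-th power converts $L^p\ell^q$-norms into $L^{p/a}\ell^{q/a}$-norms, and $a<p\wedge q\wedge 1$ ensures $p/a,q/a>1$. The problem reduces to
\begin{equation*}
\bigl\|\bigl\{(\gamma^\alpha_{j_i})^aE^\alpha_{j_i}(F_i)\bigr\}_i\bigr\|_{L^{p/a}\ell^{q/a}}
\;\lesssim\;\|\{F_i\}_i\|_{L^{p/a}\ell^{q/a}}
=\|\{|A_i\vec{t}_i|\}_i\|_{L^p\ell^q}^a,
\end{equation*}
which is the shifted-dyadic version of Nazarov's Lemma \ref{Nazarov} applied with exponents $(p/a,q/a)$ and weights $(\gamma^\alpha_j)^a$; its integrability hypothesis
$\sup_Q\fint_Q\sup_{2^{-j}\leq\ell(Q)}(\gamma^\alpha_j(x))^{p(1+\delta)}\,dx<\infty$
for some $\delta>0$ is the reverse Hölder inequality for $A_p$-matrix weights. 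Extracting the $a$-th root and invoking $\|\vec{t}\|_{\dot f^0_{p,q}(\mathbb{A})}\sim\|\vec{t}\|_{\dot f^0_{p,q}(W)}$ from \cite[Theorem 3.27]{bhyy1} closes the argument. The hard part is the uniform-in-$(k,l)$ technical step flagged above: overcoming the naive scale growth $\ell(S^\alpha_{j_i}(x))/\ell(R)=2^{l+k_+}$ demands the averaged, rather than pointwise, comparison of shifted and standard reducing operators.
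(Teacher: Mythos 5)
Your overall toolkit (shifted dyadic cubes, reducing operators on shifted cubes, and a Nazarov-type substitute for Fefferman--Stein) matches the paper's, but your central reduction contains a genuine gap: the inequality you flag as ``the main technical step'' is in fact \emph{false} for $a<p$, and the ``remedy'' you sketch does not repair it.

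Concretely, after peeling off the factor $\gamma^\alpha_{j_i}(x)=\|W^{1/p}(x)(A^\alpha_S)^{-1}\|$, your claimed bound requires, with $S=S^\alpha_{j_i}(x)$,
\begin{equation*}
\fint_S\bigl|A^\alpha_S\vec{t}_i(y)\bigr|^a\,dy
\;\lesssim\;\fint_S\bigl|A_i(y)\vec{t}_i(y)\bigr|^a\,dy
\end{equation*}
uniformly in the scale ratio $\ell(S)/2^{-i}=2^{l+k_++2}$, where $A_i(y)=A_R$ for $y\in R\in\mathscr{Q}_i$. This fails already in the scalar case $m=1$: take $n=1$, $p\in(1,\infty)$, $W(y)=|y-\tfrac12|^\delta$ with $\delta\in(0,1)$ (an $A_p$ weight), $S=[0,1]$, the dyadic subcubes $R\in\mathscr{Q}_N$ of $S$, and $\vec{t}$ supported on the single cube $R_0\ni\tfrac12$. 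Then $A_S\sim 1$ while $A_{R_0}\sim 2^{-N\delta/p}$, so the left side is $\sim 2^{-N}|\vec{t}_{R_0}|^a$ while the right side is $\sim 2^{-N}2^{-Na\delta/p}|\vec{t}_{R_0}|^a$; the ratio blows up like $2^{Na\delta/p}$ as $N\to\infty$. More structurally, for $a<p$ Jensen's inequality gives exactly the \emph{reverse} inequality (up to constants), since $\fint_S|A_R\vec z|^p\,dy\sim|A_S\vec z|^p$ forces $\fint_S|A_R\vec z|^a\,dy\lesssim|A_S\vec z|^a$. Thus passing from the coarse-scale reducing operator $A_S$ to the fine-scale family $\{A_R\}_{R\subset S}$ inside an $L^a$-average is not a pointwise step you can push through by ``absorbing the scale ratio inside the $y$-average.''

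The paper's proof circumvents this obstruction altogether: instead of trying to trade $A_S$ for $A_i(y)$, it writes $|A_S\vec{t}_i(y)|^a\leq\|A_SW^{-1/p}(y)\|^a\,|W^{1/p}(y)\vec{t}_i(y)|^a$, which is a pointwise factorization with no scale mismatch at all. The factor $\|W^{1/p}(x)A_S^{-1}\|$ (your $\gamma^\alpha$) is removed first via Lemma \ref{46x} at the \emph{original} exponents $(p,q)$ — not at $(p/a,q/a)$. Then one raises to the power $a$, dualizes in $L^{p/a}\ell^{q/a}$ (legal precisely because $a<p\wedge q\wedge 1$), uses Fubini and H\"older to isolate $\|\vec t\|_{\dot f^0_{p,q}(W)}^a$, and finally applies Lemma \ref{Nazarov} at the \emph{conjugate} exponents $\bigl((p/a)',(q/a)'\bigr)$ to the multipliers $\gamma_j=\sum_S\mathbf{1}_S\|A_SW^{-1/p}\|^a$, with the integrability hypothesis supplied by \cite[Lemma 2.19(i) and (2.11)]{bhyy1}. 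In short: the missing idea in your argument is that one should factor through $W^{1/p}(y)$ rather than through the fine-scale reducing operators, and handle $\|A_SW^{-1/p}(y)\|$ on the dual side via duality plus Nazarov, rather than applying Nazarov directly to $\|W^{1/p}(x)A_S^{-1}\|$ at $(p/a,q/a)$.
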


\begin{proof}
For any $k,l,i\in\mathbb Z$ and $x\in\mathbb{R}^n$, consider the ball $B(x,2^{l+k_+-i})$.
It is clearly contained in a cube of edge length $2^{1+l+k_+-i}$
and hence, by Lemma \ref{shifted},
in some shifted dyadic cube $S\in\mathscr{Q}^\alpha$
of edge length $\ell(S)=2^{2+l+k_+-i}$; thus,
\begin{equation}\label{12}
S\in\mathscr{Q}^\alpha_{i-\ell-k_+-2}.
\end{equation}
For any $i\in\mathbb{Z}$, any matrix $M\in M_m(\mathbb{C})$,
and any measurable set $E\subset\mathbb{R}^n$ with $|E|\in(0,\infty)$, we define
$$
F_i(M,E):=\left[\fint_E\left|M\vec{t}_i(y)\right|^a\,dy\right]^{\frac{1}{a}}.
$$
From \eqref{12}, we infer that, for any $k,l\in\mathbb Z$ and $\vec t\in\dot f_{p,q}^0(W)$,
\begin{align*}
&\left\|\left\{F_i\left(W^{\frac{1}{p}}(\cdot),B\left(\cdot,2^{l+k_+-i}\right)\right)
\right\}_{i\in\mathbb Z}\right\|_{L^p\ell^q}\\
&\quad\lesssim\left\|\left\{
\max_{\alpha\in\{1,\ldots,3^n\}}
\sup_{S\in\mathscr{Q}^\alpha_{i-l-k_+-2}}\mathbf{1}_S(\cdot)
F_i\left(W^{\frac{1}{p}}(\cdot),S\right)
\right\}_{i\in\mathbb Z}\right\|_{L^p\ell^q}\\
&\quad\lesssim\max_{\alpha\in\{1,\ldots,3^n\}}
\left\|\left\{\sum_{S\in\mathscr{Q}^\alpha_{i-l-k_+-2}}\mathbf{1}_S(\cdot)
F_i\left(W^{\frac{1}{p}}(\cdot),S\right)
\right\}_{i\in\mathbb Z}\right\|_{L^{p}\ell^{q}}\\
&\quad\leq\max_{\alpha\in\{1, \ldots, 3^n\}}
\left\|\left\{\sum_{S\in\mathscr{Q}^\alpha_{j}}
\mathbf{1}_S(\cdot)\left\|W^{\frac{1}{p}}(\cdot)A_S^{-1}\right\|F_{j+l+k_++2}(A_S,S)
\right\}_{j\in\mathbb Z}\right\|_{L^p\ell^q},
\end{align*}
where in the last step we made the change of variables, $j:=i-l-k_+-2$,
and extracted a matrix norm after multiplying by
the reducing operator $A_S$ of order $p$ for $W$
and its inverse inside the integral.

Let $\alpha\in\{1, \ldots, 3^n\}$ be fixed.
We observe that the quantity on the right-hand side of the previous computation
has exactly the form considered in Lemma \ref{46x}
with the shifted dyadic system $\mathscr{Q}^\alpha$ in place of $\mathscr{Q}$
and with $E_j(f_j)$ replaced by
$\sum_{S\in\mathscr{Q}^\alpha_{j}}\mathbf{1}_S(\cdot)F_{j+l+k_++2}(A_S,S)$
for any $j\in\mathbb{Z}$.
The mentioned Lemma \ref{46x} thus further implies that
\begin{align*}
&\left\|\left\{\sum_{S\in\mathscr{Q}^\alpha_{j}}
\mathbf{1}_S(\cdot)\left\|W^{\frac{1}{p}}(\cdot)A_S^{-1}\right\|F_{j+l+k_++2}(A_S,S)
\right\}_{j\in\mathbb Z}\right\|_{L^p\ell^q}\\
&\quad\lesssim\left\|\left\{
\sum_{S\in\mathscr{Q}^\alpha_{j}}\mathbf{1}_S(\cdot)F_{j+l+k_++2}(A_S,S)
\right\}_{j\in\mathbb Z}\right\|_{L^p\ell^q}\\
&\quad=\left\|\left\{
\sum_{S\in\mathscr{Q}^\alpha_{j}}\mathbf{1}_S(\cdot)
\fint_{S}\left|A_S\vec{t}_{j+l+k_++2}(y)\right|^a\,dy
\right\}_{j\in\mathbb Z}\right\|_{L^{\frac{p}{a}}\ell^{\frac{q}{a}}}^{\frac{1}{a}}\\
&\quad=\left(\sup\left\{\sum_{j\in\mathbb Z}
\int_{\mathbb R^n}g_j(x)\left[\sum_{S\in\mathscr{Q}^\alpha_{j}}\mathbf{1}_S(x)
\fint_{S}\left|A_S\vec{t}_{j+l+k_+ +2}(y)\right|^a\,dy\right]\,dx\right.\right.\\
&\qquad\qquad\qquad:\ \left.\|\{g_j\}\|_{L^{(\frac{p}{a})'}\ell^{(\frac{q}{a})'}}\leq1 \Bigg\}\right)^{\frac{1}{a}},
\end{align*}
where in the last step we used the duality
$(L^{\frac{p}{a}}\ell^{\frac{q}{a}})'=L^{(\frac{p}{a})'}\ell^{(\frac{q}{a})'}$
(see, for instance, \cite[p.\,177, Proposition (i)]{t83}),
recalling the assumption that $a< p\wedge q \wedge 1 $.

The quantity inside the supremum above can be written as
\begin{align*}
&\sum_{j\in\mathbb Z}\int_{\mathbb R^n}g_j(x)
\left[\sum_{S\in\mathscr{Q}^\alpha_{j}}\mathbf{1}_S(x)
\fint_{S}\left|A_S\vec{t}_{j+l+k_++2}(y)\right|^a\,dy\right]\,dx \\
&\quad\leq\sum_{j\in\mathbb Z}\sum_{S\in\mathscr{Q}^\alpha_{j}}
\int_S\left\|A_S W^{-\frac{1}{p}}(y)\right\|^a
\left|W^{\frac{1}{p}}(y)\vec{t}_{j+l+k_++2}(y)\right|^a\,dy
\fint_Sg_j(x)\,dx\\
&\quad=\sum_{j\in\mathbb Z}\int_{\mathbb R^n}
\left|W^{\frac{1}{p}}(y)\vec{t}_{j+l+k_++2}(y)\right|^a
\left[\sum_{S\in\mathscr{Q}^\alpha_j}\mathbf{1}_S(y)
\left\|A_SW^{-\frac{1}{p}}(y)\right\|^a\fint_Sg_j(x)\,dx\right]\,dy\\
&\quad\leq\left\|\left\{
\left|W^{\frac{1}{p}}(\cdot)\vec{t}_{j+l+k_++2}(\cdot)\right|^a
\right\}_{j\in\mathbb Z}\right\|_{L^{\frac{p}{a}}\ell^{\frac{q}{a}}}\\
&\qquad\times\left\|\left\{
\sum_{S\in\mathscr{Q}^\alpha_j}\mathbf{1}_S(\cdot)
\left\|A_SW^{-\frac{1}{p}}(\cdot)\right\|^a\fint_Sg_j(x)\,dx
\right\}_{j\in\mathbb Z}\right\|_{L^{(\frac{p}{a})'}\ell^{(\frac{q}{a})'}}\\
&\quad=:\mathrm{I}\times\mathrm{II},
\end{align*}
where in the last step we used twice H\"older's inequality.
By reversing the earlier change of variables back to $i:=j+l+k_+ +2$, we find that
\begin{equation*}
\mathrm{I}
=\left\|\left\{\left|W^{\frac{1}{p}}(\cdot)\vec{t}_i(\cdot)\right|
\right\}_{i\in\mathbb Z}\right\|_{L^p\ell^q}^a
=\left\|\vec{t}\right\|_{\dot f^0_{p,q}(W)}^a.
\end{equation*}
The term  $\mathrm{II}$ will be estimated with the help of Lemma \ref{Nazarov}.
In the notation of that lemma, we can write
\begin{align*}
\mathrm{II}
=\left\|\left\{\gamma_jE_j\left(g_j\right)
\right\}_{j\in\mathbb Z}\right\|_{L^{(\frac{p}{a})'}\ell^{(\frac{q}{a})'}},
\end{align*}
where $E_j$ for each $j\in\mathbb Z$ is the same as in \eqref{Ej} and, for each $j\in\mathbb{Z}$,
$\gamma_j:=\sum_{S\in\mathscr{Q}^\alpha_j}
\mathbf{1}_S\|A_S W^{-\frac{1}{p}}\|^a.$
We intend to use Lemma \ref{Nazarov}
with $((\frac{p}{a})',(\frac{q}{a})')$ in place of $(p,q)$.
Since $a<p\wedge q$, it follows that we have the required conditions
$(\frac{p}{a})'\in(1,\infty)$ and $(\frac{q}{a})'\in[1,\infty]$.
By \cite[Lemma 2.19(i) and (2.11)]{bhyy1},
we conclude that, for any $Q\in\mathscr{Q}^\alpha$,
\begin{equation}\label{8xxx}
\fint_Q\sup_{j\in\mathbb{Z},\,2^{-j}\leq\ell(Q)}
\left[\gamma_j(y)\right]^{u}\,dy
=\fint_Q\sup_{R\in\mathscr{Q}^\alpha,\,y\in R\subset Q}
\left\|A_RW^{-\frac{1}{p}}(y)\right\|^{au}\,dy
\lesssim1,
\end{equation}
where we can take any $u\in(0,\infty)$ if $p\in(0,1]$
and some $u$ such that $au>p'$ if $p\in(1,\infty)$.

We claim that we can always take $u>(\frac{p}{a})'$.
If $p\in(0,1]$, this is obvious because we can take $u$ as large as we like.
If $p\in(1,\infty)$, the assumption that $a<1$ further implies that $\frac{p}{a}-1>p-1>0$, and hence
$u>\frac{p'}{a}
=\frac{p/a}{p-1}
> \frac{p/a}{p/a-1}
=(\frac{p}{a})'$
also in this case. Thus, \eqref{8xxx} is a sufficient condition
for the pointwise multipliers $\gamma_j$ for any $j\in\mathbb Z$
to apply Lemma \ref{Nazarov} in $L^{(\frac{p}{a})'}\ell^{(\frac{q}{a})'}$.
This application then gives
\begin{equation*}
\mathrm{II}=\left\|\left\{\gamma_jE_j\left(g_j\right)
\right\}_{j\in\mathbb Z}\right\|_{L^{(\frac{p}{a})'}\ell^{(\frac{q}{a})'}}
\lesssim\|\{g_j\}_{j\in\mathbb Z}\|_{L^{(\frac{p}{a})'}\ell^{(\frac{q}{a})'}}
\leq1,
\end{equation*}
where the last inequality is simply the condition
from the duality in $L^{\frac{p}{a}}\ell^{\frac{q}{a}}$ that we applied above.
Putting everything together, we have completed the proof of Lemma \ref{ad F1}.
\end{proof}

By Lemmas \ref{ad prelim} and \ref{ad F1},
we obtain the following conclusion. The result looks exactly the same as its Besov space analogue in Theorem \ref{ad Besov}; a slight difference in the statements is encoded into the convention concerning the number $J$.

\begin{theorem}\label{ad F0}
Let $p\in(0,\infty)$, $q\in(0,\infty]$, $s\in\mathbb R$, and $W\in A_p$.
Suppose that $B$ is $(D,E,F)$-almost diagonal with parameters
$D>J,$ $E>\frac{n}{2}+s,$ and $F>J-\frac{n}{2}-s,$
where $J:=\frac{n}{\min\{1,p,q\}}$.
Then $B$ is bounded on $\dot f^s_{p,q}(W)$.
\end{theorem}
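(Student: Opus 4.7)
The plan is to follow the same template as the proof of Theorem \ref{ad Besov}, with Lemma \ref{ad F1} replacing Lemma \ref{ad B1}. First, by Lemma \ref{s=0 enough}, it suffices to treat the case $s=0$, since the map $(D,E,F)\mapsto(D,E-s,F+s)$ simultaneously converts the almost diagonal assumptions and the target sequence space between the cases $s$ and $0$, and leaves the strict inequalities in the hypotheses invariant.

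Second, I would apply Lemma \ref{ad prelim} with $H_j:=W^{1/p}$ for every $j\in\mathbb{Z}$ and with a suitably chosen parameter $a\in(0,1\wedge p\wedge q)$. The key point is that, because $D>J=n/(1\wedge p\wedge q)$ and $F+n/2>J$ are strict inequalities, there is room to pick $a$ strictly less than $1\wedge p\wedge q$ and still close enough to it that both $n/a<D$ and $n/a<F+n/2$. With this choice, the numerical coefficient
\[
2^{-(E-\frac{n}{2})k_-}2^{-k_+(F+\frac{n}{2}-\frac{n}{a})}2^{-(D-\frac{n}{a})l}
\]
appearing in \eqref{ABt} has strictly negative exponents of $k_-$, $k_+$, and $l$ (the first because $E>n/2$ when $s=0$), so the double series in $(k,l)$ with these coefficients raised to the power $r:=p\wedge q\wedge1$ converges.

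Third, Lemma \ref{ad F1} applies with the chosen $a\in(0,1\wedge p\wedge q)$ and delivers, uniformly in $k\in\mathbb{Z}$ and $l\in\mathbb{Z}_+$, the estimate
\[
\left\|\left\{\left[\fint_{B(\cdot,2^{l+k_+-i})}\left|W^{\frac{1}{p}}(\cdot)\vec{t}_i(y)\right|^a\,dy\right]^{\frac{1}{a}}\right\}_{i\in\mathbb Z}\right\|_{L^p\ell^q}\lesssim\left\|\vec{t}\right\|_{\dot f^0_{p,q}(W)}.
\]
Finiteness of the right-hand side of \eqref{ABt} then follows, and, by Remark \ref{ad conv}, the series defining $B\vec{t}$ converges absolutely. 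Combining the summability of the geometric factors with this uniform bound and noting that the $L\dot F_{p,q}$ quasi-norm on the left of \eqref{ABt} is exactly $\|B\vec{t}\|_{\dot f^0_{p,q}(W)}$, we conclude that $\|B\vec{t}\|_{\dot f^0_{p,q}(W)}\lesssim\|\vec{t}\|_{\dot f^0_{p,q}(W)}$.

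The only delicate step is choosing $a$: one needs simultaneously $a<1\wedge p\wedge q$ strictly (so that Lemma \ref{ad F1} applies via duality and Nazarov's Lemma \ref{Nazarov}) and $n/a$ below both $D$ and $F+n/2$ (so that the tail series converges). The genuine analytical obstacle, namely the absence of a matrix-weighted Fefferman--Stein vector-valued inequality, has already been circumvented and absorbed into Lemma \ref{ad F1} via the shifted dyadic systems of Lemma \ref{shifted}, the pointwise reducing-operator estimate of Lemma \ref{46x}, and Nazarov's Lemma \ref{Nazarov}; so at this stage the argument reduces to verifying that all the required strict inequalities are open enough to accommodate such a choice of $a$, which is immediate from the hypotheses.
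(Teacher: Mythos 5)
Your proposal is correct and follows essentially the same route as the paper: reduce to $s=0$ via Lemma \ref{s=0 enough}, apply Lemma \ref{ad prelim} with $H_j=W^{1/p}$ and $a<1\wedge p\wedge q$ chosen close enough to the endpoint that $n/a<D$ and $n/a<F+n/2$, invoke Lemma \ref{ad F1} for the uniform-in-$(k,l)$ bound, and sum the resulting geometric series. The discussion of the choice of $a$ and of Remark \ref{ad conv} for the absolute convergence of $B\vec t$ is also consistent with the paper's argument.
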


\begin{proof}
As in the proof of Theorem \ref{ad Besov},
by Lemma \ref{s=0 enough}, we find that, to prove the present theorem,
it is enough to consider the case $s=0$.

We apply Lemma \ref{ad prelim} with both $H_j=W^{\frac{1}{p}}$
for any $j\in\mathbb Z$ and a suitable choice $a\in[0,p\wedge q\wedge 1)$.
Since $J=\frac{n}{p\wedge q\wedge 1}$, so if we choose $a$
very close to the upper bound of its admissible range,
we can make $\frac{n}{a}-J$ as small as we like.
In particular, given $D,E,F$ as in the assumptions (with $s=0$),
we can choose $a\in[0,p\wedge q\wedge 1)$
so that the coefficient in front of the norm in the right-hand side of \eqref{ABt}, namely
\begin{align}\label{coefF}
&2^{-(E-\frac{n}{2})k_-}2^{-(F+\frac{n}{2}-\frac{n}{a})k_+}2^{-(D-\frac{n}{a})l}\\
&\quad=2^{-(E-\frac{n}{2})k_-}2^{-[F+\frac{n}{2}-J-(\frac{n}{a}-J)]k_+}
2^{-[D-J-(\frac{n}{a}-J)]l},\notag
\end{align}
has exponential decay with respect to each of the variables $k_-$, $k_+$, and $l$.

On the other hand, for $a$ as above,
Lemma \ref{ad F1} guarantees that the norm in the right-hand side of \eqref{ABt} is estimated by
\begin{equation*}
\left\|\left\{\left[\fint_{B(\cdot,2^{l+k_+-i})}
\left|W^{\frac{1}{p}}(\cdot)\vec{t}_i(y)\right|^a\,dy\right]^{\frac{1}{a}}
\right\}_{i\in\mathbb Z}\right\|_{L^p\ell^q}
\lesssim\left\|\vec{t}\right\|_{\dot f^0_{p,q}(W)}.
\end{equation*}
This bound is independent of $(k,l)$,
and hence the series with coefficients \eqref{coefF} converges,
with $a$ chosen close enough to $p\wedge q\wedge 1$, as described.
This finishes the proof of Theorem \ref{ad F0}.
\end{proof}

{\color{blue}

\begin{remark}\label{rem max F}
Writing $W^{\frac1p}(\cdot)\vec t_i(y)=W^{\frac1p}(\cdot)W^{-\frac1p}(y)W^{\frac1p}(y)\vec t_i(y)$ and using H\"older's inequality to bound the $L^a$ average with $a<1$ by an $L^1$ average, the left-hand side of the inequality of Lemma \ref{ad F1} is seen to be dominated by
\begin{equation*}
  \left\|\left\{\left[\fint_{B(\cdot,2^{l+k_+-i})}
\left|W^{\frac{1}{p}}(\cdot)\vec{t}_i(y)\right|^a\,dy\right]^{\frac{1}{a}}
\right\}_{i\in\mathbb Z}\right\|_{L^p\ell^q}
  \leq \left\|\left\{\mathcal M_{W,p}(W^{\frac1p}(\cdot)\vec t_i(\cdot))\right\}_{i\in\mathbb Z}\right\|_{L^p\ell^q},
\end{equation*}
where $\mathcal M_{W,p}$ is the weighted maximal operator defined in Remark \ref{rem max}. If $p,q\in(1,\infty)$ and $W\in A_p$, then
\begin{equation*}
  \left\|\left\{\mathcal M_{W,p}(W^{\frac1p}(\cdot)\vec t_i(\cdot))\right\}_{i\in\mathbb Z}\right\|_{L^p\ell^q}
  \leq C\left\|\left\{W^{\frac1p}(\cdot)\vec t_i(\cdot)\right\}_{i\in\mathbb Z}\right\|_{L^p\ell^q}
  =C\left\|\vec t\right\|_{\dot f^0_{p,q}(W)}
\end{equation*}
by the matrix-weighted extension of the Fefferman--Stein vector-valued maximal inequality recently due to \cite{ks24}. Thus, an application of \cite[Theorem 1]{ks24} could replace Lemma \ref{ad F1} in the proof of Theorem \ref{ad F0} in the said range of exponents $p,q\in(1,\infty)$. However, our argument above still seems necessary to cover the full scale of parameters $p\in(0,\infty)$ and $q\in(0,\infty]$.

Note that a main point of the rescaling parameter $a\in(0,\min(p,q,1))$ of Lemma \ref{ad F1} is precisely to transform potentially small exponents to $p/a,q/a>1$. While this is a well-known trick in real analysis, a workable matrix-weighted version of the rescaled maximal operator $\mathcal [M(|f(\cdot)|^a)]^{1/a}$ does not  yet seem to be available.
\end{remark}

}

{\color{red}
\section{Sharpness on Triebel--Lizorkin Spaces with $q\geq\min(1,p)$}\label{sec sharp TL}
}

The following lemma, analogous to Lemma \ref{ad Besov sharp},
shows the sharpness of the assumptions of Theorem \ref{ad F0} in a large,
but not full, range of the parameters. That is, we are only able to prove
the exact same necessary conditions as in the Besov case of Lemma \ref{ad Besov sharp},
while the sufficient conditions in the Triebel--Lizorkin case in Theorem \ref{ad F0}
are sometimes stronger. The two conditions match when $J=\frac{n}{\min\{1,p\}}$
or, in other words, when $q\geq\min\{1,p\}$.

\medskip

\begin{lemma}\label{ad Triebel sharp}
Let $s\in\mathbb R$, $p\in(0,\infty)$, $q\in(0,\infty]$, and $D,E,F\in\mathbb{R}$.
Suppose that every $(D,E,F)$-almost diagonal matrix $B$ is bounded on $\dot f^s_{p,q} $.
Then $D>\frac{n}{\min\{1,p\}},$ $E>\frac{n}{2}+s,$ and
$F>\frac{n}{\min\{1,p\}}-\frac{n}{2}-s.$
\end{lemma}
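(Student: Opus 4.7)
The plan is to mirror the structure of the proof of Lemma \ref{ad Besov sharp}, testing the canonical $(D,E,F)$-almost diagonal matrix $B^{DEF}$ from \eqref{bDEF} (whose boundedness is granted by the hypothesis) against the same families of sequences used there. The novelty lies in adapting the Besov computations to the Triebel--Lizorkin quasi-norm
$\|\vec t\|_{\dot f^s_{p,q}}=\|\{2^{js}|\vec t_j|\}_{j\in\mathbb Z}\|_{L^p\ell^q}$,
in which the $L^p$ and $\ell^q$ norms do not decouple. Instead of factoring estimates as in the Besov case, I will secure pointwise lower bounds on the $\ell^q$ sum on sets of positive $L^p$ measure, and then integrate. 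Throughout, I write $Q_x^j$ for the unique cube in $\mathscr Q_j$ containing $x$, so that $|\vec t_j(x)|=|\vec t_{Q_x^j}|\cdot 2^{jn/2}$.

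For the range of $D$: with $\vec t$ as in \eqref{exa1}, one has $\|\vec t\|_{\dot f^s_{p,q}}=1$, and restricting the $\ell^q$ sum to the single term $j=0$ gives
\begin{equation*}
\left\|B^{DEF}\vec t\right\|_{\dot f^s_{p,q}}\geq\left\|\left(B^{DEF}\vec t\right)_0\right\|_{L^p}=\left[\sum_{Q\in\mathscr Q_0}(1+|x_Q|)^{-Dp}\right]^{1/p},
\end{equation*}
which forces $D>n/p$. The stronger bound $D>n$ (relevant only when $p\geq 1$) follows by the indicator-type sequence $\vec t^{(N)}$ supported on those $Q\in\mathscr Q_0$ with $|x_Q|<N$ used in the proof of Lemma \ref{ad Besov sharp}: since both $\vec t^{(N)}$ and $B^{DEF}\vec t^{(N)}$ are supported at level $j=0$ only, the Triebel--Lizorkin quasi-norm reduces to a plain $L^p$ norm and the Besov computation from \eqref{est1}--\eqref{est2} transcribes verbatim.

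For $E>n/2+s$: continuing with the single-atom test $\vec t$ from \eqref{exa1}, one has $(B^{DEF}\vec t)_Q=(1+|x_Q|)^{-D}2^{-jE}$ for $Q\in\mathscr Q_j$ with $j\geq 0$, so $|(B^{DEF}\vec t)_j(x)|=(1+|x_{Q_x^j}|)^{-D}2^{j(n/2-E)}$. On any compact subset $U$ of the interior of $Q_{0,\mathbf 0}$, the distance $|x_{Q_x^j}|$ is bounded uniformly in $j\geq 0$, so $(1+|x_{Q_x^j}|)^{-D}\gtrsim 1$ and
\begin{equation*}
\left[\sum_{j\geq 0}2^{jsq}\left|\left(B^{DEF}\vec t\right)_j(x)\right|^q\right]^{1/q}\gtrsim\left[\sum_{j\geq 0}2^{jq(s+n/2-E)}\right]^{1/q};
\end{equation*}
since $|U|>0$, the finiteness of $\|B^{DEF}\vec t\|_{\dot f^s_{p,q}}$ forces the right-hand side to be finite, hence $E>n/2+s$. (When $q=\infty$, replace the $\ell^q$ sum by a supremum.)

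For $F>n/\min\{1,p\}-n/2-s$, the argument splits by $p$. When $p<1$, I keep $\vec t$ from \eqref{exa1} and analyze the contribution from levels $j<0$ at points $x$ with $|x|\sim 2^{-j_0}$, $j_0<0$: the bound $2^j|x_{Q_x^j}|\lesssim 1$ for $j\leq j_0$ yields $(\sum_{j\leq j_0}2^{jsq}|(B^{DEF}\vec t)_j(x)|^q)^{1/q}\sim 2^{j_0(F+s+n/2)}$; raising to the $p$-th power, integrating over the annulus $\{|x|\sim 2^{-j_0}\}$ of measure $\sim 2^{-j_0 n}$, and summing over $j_0\to-\infty$ produces $\sum_{j_0\leq 0}2^{j_0(p(F+s+n/2)-n)}$, whose convergence forces $F>n/p-n/2-s$. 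When $p\geq 1$, I instead use the concentrated sequence $\vec t^{(N)}$ from \eqref{exa2}, for which $\|\vec t^{(N)}\|_{\dot f^s_{p,q}}\sim 1$ uniformly in $N$; on the interior of $B(\mathbf 0,1/2)$ and $0\leq j<N$, enough support of $\vec t^{(N)}$ clusters near $x_{Q_x^j}$ to yield $2^{js}|(B^{DEF}\vec t^{(N)})_j(x)|\sim 2^{(N-j)(n/2-F-s)}$; taking the $\ell^q$ norm (or supremum when $q=\infty$) over $j=0,\ldots,N-1$, the $L^p$ norm over this interior region, and letting $N\to\infty$, forces $F>n/2-s=n/\min\{1,p\}-n/2-s$. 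The main technical obstacle throughout is ensuring that these pointwise lower bounds on the $\ell^q$ sums hold uniformly on sets of positive $L^p$ measure, which amounts to careful but routine geometric control of the dyadic centers $x_{Q_x^j}$; no new ingredient beyond the Besov arguments is required.
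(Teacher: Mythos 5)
Your strategy of deriving pointwise lower bounds on the $\ell^q$ quantity and then integrating in $L^p$ is a genuinely different route from the paper's, which avoids Triebel--Lizorkin computations entirely by invoking the embedding $\|\cdot\|_{\dot b^s_{p,p\vee q}}\leq\|\cdot\|_{\dot f^s_{p,q}}$ from \cite[Proposition 3.26]{bhyy1} and then re-using the Besov computations of Lemma \ref{ad Besov sharp} essentially verbatim. For $q\in(0,\infty)$ your direct approach is sound; the $D$ estimates are identical in either approach because the single-level test sequences make the Besov and Triebel--Lizorkin quasi-norms of the relevant objects coincide.

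However, for $q=\infty$ there is a genuine gap. With the single-atom $\vec t$ of \eqref{exa1}, your pointwise bound reduces to $\sup_{j\geq0}2^{j(s+n/2-E)}<\infty$ on a set of positive measure, which only gives $E\geq n/2+s$: the endpoint $E=n/2+s$ is not excluded. Likewise, the sequence $\vec t^{(N)}$ of \eqref{exa2} yields $\sup_{0\leq j<N}2^{(N-j)(n/2-s-F)}$, which stays bounded as $N\to\infty$ already at $F=n/2-s$. The obstruction is structural: a test sequence supported on a single dyadic level produces, in $\ell^\infty$, a supremum of one-term quantities, whereas the strict inequalities encode the convergence of geometric series. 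The paper circumvents this in its Case 2) by switching to the multi-scale sequences \eqref{exa3} and \eqref{exa4}, which place mass on infinitely many levels so that the quantity inside the supremum is itself a sum: one obtains $\sup_{j\geq0}\sum_{i=0}^j 2^{(i-j)(E-s-n/2)}<\infty$ and $\sum_{i\geq0}2^{-i(F+s-n/2)}<\infty$, which force the open conditions $E>n/2+s$ and $F>n/2-s$. (Your annulus argument for $F$ when $p<1$ does survive $q=\infty$, because it sums over spatial regions in the $L^p$ integral rather than over scales in $\ell^q$, but this covers only $p<1$.) To complete the lemma you must introduce such multi-scale tests, or an equivalent device, for the $q=\infty$ case.
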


\begin{proof}
We consider the $(D,E,F)$-almost diagonal matrix
$B^{DEF}:=\{b^{DEF}_{Q,R}\}_{Q,R\in\mathscr{Q}}$ from \eqref{bDEF}.
We first estimate the range of $D$.
By some arguments similar to those used in the estimations of both \eqref{est1} and \eqref{est2}, we obtain
$\sum_{Q\in\mathscr{Q}_0}\frac{1}{(1+|x_Q|)^{Dp}}<\infty$
and
$\sum_{l\in\mathbb{Z}^n}\frac{1}{(1+|l|)^D}<\infty.$
These further imply that $D>\frac{n}{\min\{1,p\}}$.

Next, we estimate the ranges of both $E$ and $F$ by considering the following two cases on $q$.

\emph{Case 1)} $q\in(0,\infty)$.
In this case, we first recall a classical embedding.
Applying \cite[Proposition 3.26]{bhyy1} with $\tau$ and $W$ replaced,
respectively, by $0$ and the identity matrix,
we find that, for any $\vec t:=\{\vec t_Q\}_{Q\in\mathscr{Q}}\subset\mathbb{C}^m$,
\begin{equation}\label{39y}
\left\|\vec t\right\|_{\dot b^{s}_{p,p\vee q}}
\leq\left\|\vec t\right\|_{\dot f^{s}_{p,q}}
\leq\left\|\vec t\right\|_{\dot b_{p,p\wedge q}^{s}}.
\end{equation}
Let $\vec t$ be the same as in \eqref{exa1}.
Then $\|\vec t\|_{\dot f_{p,q}^s}=1$. This, combined with
the boundedness of $B$ on $\dot f^s_{p,q}$,
\eqref{39y}, and \eqref{est3} with $q$ replaced by $p\vee q$, further implies that
\begin{align*}
\infty
&>\left\|B\vec t\right\|_{\dot f_{p,q}^s}^{p\vee q}
\geq\left\|B\vec t\right\|_{\dot b_{p,p\vee q}^s}^{p\vee q}\\
&=\sum_{j=0}^\infty\left\{2^{-j[(E-\frac{n}{2}-s)p+n]}
\sum_{k\in\mathbb{Z}^n}\left(1+2^{-j}|k| \right)^{-Dp}\right\}^{\frac{p\vee q}{p}}\\
&\quad+\sum_{j=-\infty}^{-1}\left\{2^{j[(F+\frac{n}{2}+s)p-n]}
\sum_{k\in\mathbb{Z}^n}(1+|k|)^{-Dp}\right\}^{\frac{p\vee q}{p}} .
\end{align*}
From this, the already showed fact that $D>\frac{n}{\min\{1,p\}}$, and \eqref{33z}, it follows that
$$
\infty
>\sum_{j=0}^\infty2^{-j(E-\frac{n}{2}-s)(p\vee q)}
+\sum_{j=-\infty}^{-1}2^{j(F+\frac{n}{2}+s-\frac{n}{p})(p\vee q)}
$$
and hence
$E>\frac{n}{2}+s$ and $F>\frac{n}{p}-\frac{n}{2}-s.$

Now, we prove $F>\frac{n}{2}-s$.
For any $N\in\mathbb{N}$, let $\vec t^{(N)}$ be the same as in \eqref{exa2}.
Then
\begin{equation}\label{289}
\left\|\vec t^{(N)}\right\|_{\dot f_{p,q}^s}
=\left(\sum_{Q\in\mathscr{Q}_N,\,|x_Q|<1}|Q|\right)^{\frac{1}{p}}\\
\sim1,
\end{equation}
where the positive equivalence constants depend only on $n$.
By \eqref{39y} and \eqref{est4} with $q$ replaced by $p\vee q$, we conclude that
$$
\left\|B\vec t\right\|_{\dot f_{p,q}^s}
\geq\left\|B\vec t\right\|_{\dot b_{p,p\vee q}^s}\\
\gtrsim\left[\sum_{j=0}^{N-1}2^{(j-N)(F+s-\frac{n}{2})(p\vee q)}\right]^{\frac{1}{p\vee q}}.
$$
This, together with \eqref{289} and the assumption that $B$ is bounded on $\dot f^s_{p,q}$,
further implies that, for any $N\in\mathbb{N}$,
$$
\sum_{j=-N}^{-1}2^{j(F+s-\frac{n}{2})(p\vee q)}
=\sum_{j=0}^{N-1}2^{(j-N)(F+s-\frac{n}{2})(p\vee q)}
\lesssim1.
$$
Letting $N\to\infty$, we obtain
$\sum_{j=-\infty}^{-1}2^{j(F+s-\frac{n}{2})(p\vee q)}\lesssim1$
and hence $F>\frac{n}{2}-s$.
This finishes the estimations of both $E$ and $F$ in this case.

\emph{Case 2)} $q=\infty$.
In this case, we first show $E>s+\frac{n}{2}$.
Let $\vec t$ be the same as in \eqref{exa3}. Then,
using the definitions of both $\vec t$ and $\|\cdot\|_{\dot f_{p,\infty}^s}$, we obtain
$$
\left\|\vec t\right\|_{\dot f_{p,\infty}^s}
=\left\|\sup_{Q\in\mathscr{Q},\,\ell(Q)\leq1,\,|x_Q|<1}
\mathbf{1}_Q\right\|_{L^p}
\sim1,
$$
where the positive equivalence constants depend only on $n$.
From this, the boundedness of $B$ on $\dot f^s_{p,\infty}$,
\eqref{39y}, and \eqref{est5}, we deduce that
$$
\infty
>\left\|B\vec t\right\|_{\dot f_{p,\infty}^s}
\geq\left\|B\vec t\right\|_{\dot b_{p,\infty}^s}
\gtrsim\sum_{i=-\infty}^02^{i(E-s-\frac{n}{2})}
$$
and hence $E>s+\frac{n}{2}$.

Next, we prove $F>\frac{n}{2}-s$.
Let $\vec t$ be the same as in \eqref{exa4}.
Notice that, for any $j\in\mathbb{Z}_+$, $Q\in\mathscr{Q}_j$, and $x\in Q$,
$|x-x_Q|<\sqrt{n}2^{-j}$ and hence
$1+|x_Q|\sim1+|x|,$
where the positive equivalence constants depend only on $n$.
This, combined with \eqref{33x}, further implies that
\begin{align*}
\left\|\vec t\right\|_{\dot f_{p,\infty}^s}
&=\left\|\sup_{j\in\mathbb{Z}_+}\sup_{Q\in\mathscr{Q}_j}
\frac{1}{(1+|x_Q|)^{\frac{n+1}{p}}}\mathbf{1}_Q\right\|_{L^p}\\
&\sim\left\|\frac{1}{(1+|\cdot|)^{\frac{n+1}{p}}}\right\|_{L^p}
\sim\left[\int_{\mathbb{R}^n}\frac{1}{(1+|x|)^{n+1}}\,dx\right]^{\frac{1}{p}}
\sim1.
\end{align*}
By the boundedness of $B$ on $\dot f^s_{p,\infty}$,
we conclude that $(B\vec t)_{Q_{0,\mathbf{0}}}$ makes sense.
This, together with $D>\frac{n}{\min\{1,p\}}$ and \eqref{33z}, further implies that
\begin{align*}
\infty
>\sum_{R\in\mathscr{Q}}\left|b_{Q_{0,\mathbf{0}},R}\vec t_R\right|
=\sum_{i=0}^\infty2^{-i(F+s+\frac{n}{2})}
\sum_{R\in\mathscr{Q}_i}\frac{1}{(1+|x_R|)^{D+\frac{n+1}{p}}}
=\sum_{i=0}^\infty2^{-i(F+s-\frac{n}{2})},
\end{align*}
and hence $F>\frac{n}{2}-s$.

Finally, we show $F>\frac{n}{p}-\frac{n}{2}-s$.
Let $\vec t$ be the same as in \eqref{exa1}.
Then $\|\vec t\|_{\dot f_{p,\infty}^s}=1$.
This, combined with both the assumption that $B$ is bounded on $\dot f^s_{p,\infty}$
and $F>\frac{n}{2}-s$, further implies that
\begin{align*}
\infty
&>\left\|B\vec t\right\|_{\dot f_{p,\infty}^s}^p
=\left\|\sup_{j\in\mathbb{Z}}2^{j(s+\frac{n}{2})}
\sup_{Q\in\mathscr{Q}_j}\left|\left(B\vec t\right)_Q\right|\mathbf{1}_Q\right\|_{L^p}^p\\
&=\int_{\mathbb{R}^n}\left[\sup_{j\in\mathbb{Z}}2^{j(s+\frac{n}{2})}
\sup_{Q\in\mathscr{Q}_j}|b_{Q,Q_{0,\mathbf{0}}}|\mathbf{1}_Q(x)\right]^p\,dx\\
&>\int_{\mathbb{R}^n}\left[\sup_{j\in\mathbb{Z},\,j\leq -1}2^{j(F+s+\frac{n}{2})}
\sup_{Q\in\mathscr{Q}_j}\left(1+2^j|x_Q|\right)^{-D}\mathbf{1}_Q(x)\right]^p\,dx\\
&>\sum_{i=0}^\infty\int_{Q_{-(i+1),\mathbf{0}}\setminus Q_{-i,\mathbf{0}}}
\left[\sup_{j\in\mathbb{Z},\,j\leq -1}2^{j(F+s+\frac{n}{2})}
\sup_{Q\in\mathscr{Q}_j}\left(1+2^j|x_Q| \right)^{-D}\mathbf{1}_Q(x)\right]^p\,dx\\
&>\sum_{i=0}^\infty|Q_{-(i+1),\mathbf{0}}\setminus Q_{-i,\mathbf{0}}|
\left[\sup_{j\in\mathbb{Z},\,j\leq -(i+1)}2^{j(F+s+\frac{n}{2})}\right]^p
\sim\sum_{i=0}^\infty2^{-i[(F+s+\frac{n}{2})p-n]},
\end{align*}
and hence $F>\frac{n}{p}-\frac{n}{2}-s$.
This finishes the estimations of both $E$ and $F$ in this case
and hence finishes the proof of Lemma \ref{ad Triebel sharp}.
\end{proof}

\section{Boundedness in the General Subcritical Case}
\label{sec ad general}

Combining the tools developed separately for Besov and Triebel--Lizorkin spaces,
we can now prove Theorem \ref{ad BF} in full generality:

\begin{proof}[Proof of Theorem \ref{ad BF}]
As in the proof of Theorem \ref{ad Besov},
by Lemma \ref{s=0 enough}, we conclude that,
to show the present theorem, it is enough to consider the case $s=0$.

Letting $P\in\mathscr{Q}$ be fixed, we need to estimate
$\|\{H_j(B\vec{t})_j\}_{j\in\mathbb Z}\|_{L\dot A_{p,q}}$,
where, for any $j\in\mathbb{Z}$,
$H_j:=\mathbf{1}_P\mathbf{1}_{[j_P,\infty)}(j)W^{\frac{1}{p}}$
and
$$
L\dot A_{p,q}:=
\begin{cases}
L^p\ell^q&\text{if }\dot a^{0,\tau}_{p,q}=\dot f^{0,\tau}_{p,q},\\
\ell^qL^p&\text{if }\dot a^{0,\tau}_{p,q}=\dot b^{0,\tau}_{p,q}.
\end{cases}
$$
Then, using Lemma \ref{ad prelim}, we obtain
\begin{align}\label{ABtx}
\left\|\left\{H_j\left(B\vec{t}\right)_j\right\}_{j\in\mathbb Z}\right\|_{L\dot A_{p,q}}^r
&\lesssim\sum_{k\in\mathbb{Z}}\sum_{l=0}^\infty
\Bigg[2^{-(E-\frac{n}{2})k_-}2^{-k_+(F+\frac{n}{2}-\frac{n}{a})}2^{-(D-\frac{n}{a})l}\\
&\quad\times\left.\left\|\left\{\left[\fint_{B(\cdot,2^{l+k_+-i})}
\left|\mathbf{1}_P(\cdot)W^{\frac{1}{p}}(\cdot)\vec{t}_{i}(y)\right|^a\,dy
\right]^{\frac{1}{a}}\right\}_{i\geq j_P+k}\right\|_{L\dot A_{p,q}}\right]^r,\notag
\end{align}
where $r:=p\wedge q\wedge 1$ and we chose
\begin{equation*}
\begin{cases}
a\in(0,p\wedge q\wedge 1)&\text{if }\dot a^{0,\tau}_{p,q}=\dot f^{0,\tau}_{p,q},\\
a:=p\wedge 1&\text{if }\dot a^{0,\tau}_{p,q}=\dot b^{0,\tau}_{p,q}.
\end{cases}
\end{equation*}

Notice that, for any $k,i\in\mathbb Z$ and $l\in\mathbb Z_+$, the bound
$2^{l+k_+-i}\leq\ell(P)=2^{-j_P}$ is equivalent to $i\geq j_P+k_+ +l$.
Then we further apply the $r$-triangle inequality to
the right-hand side of \eqref{ABtx} to obtain
\begin{align}\label{BF split}
&\left\|\left\{\left[\fint_{B(\cdot,2^{l+k_+-i})}
\left|\mathbf{1}_P(\cdot)W^{\frac{1}{p}}(\cdot)\vec{t}_{i}(y)\right|^a\,dy
\right]^{\frac{1}{a}}\right\}_{i\geq j_P+k}\right\|_{L\dot A_{p,q}}^r\\
&\quad\leq\left\|\left\{\left[\fint_{B(\cdot,2^{l+k_+-i})}
\left|\mathbf{1}_P(\cdot)W^{\frac{1}{p}}(\cdot)\vec{t}_{i}(y)\right|^a\,dy
\right]^{\frac{1}{a}}\right\}_{i\geq j_P+k_++l}\right\|_{L\dot A_{p,q}}^r\notag\\
&\qquad+\sum_{i=j_P+k}^{j_P+k_++l-1}
\left\|\left[\fint_{B(\cdot,2^{l+k_+-i})}
\left|\mathbf{1}_P(\cdot)W^{\frac{1}{p}}(\cdot)\vec{t}_{i}(y)\right|^a\,dy
\right]^{\frac{1}{a}}\right\|_{L^p}^r\notag\\
&\quad=:(\mathrm{I})^r+\sum_{i=j_P+k}^{j_P+k_++l-1}(J_i)^r,\notag
\end{align}
noticing that both $L^p\ell^q$ and $\ell^qL^p$ norms reduce to just the $L^p$ norm
when applied to an $\vec{f}=\{f_i\}_{i\in\mathbb Z}$
with only one non-zero component $f_i$ for some $i\in\mathbb Z$.

We first estimate $\mathrm{I}$.
By construction, in the first term on the right-hand side of \eqref{BF split},
we have $2^{l+k_+-i}\leq\ell(P)$.
Hence, if $x\in P$ and $y\in B(x,2^{l+k_+-i})\subset B(x,\ell(P))$,
then $y\in3P$. On the other hand,
it is clear that $j_P+k_++l\geq j_P$.
Thus, for any $\vec t\in\dot a^{0,\tau}_{p,q}(W)$, we have
\begin{align*}
\mathrm{I}
&\leq\left\|\left\{\left[\fint_{B(\cdot,2^{l+k_+-i})}
\left|W^{\frac{1}{p}}(\cdot)\left\{\mathbf{1}_{[j_P,\infty)}(i)
\mathbf{1}_{3P}(y)\vec{t}_{i}(y)\right\}\right|^a\,dy
\right]^{\frac{1}{a}}\right\}_{i\in\mathbb Z}\right\|_{L\dot A_{p,q}}\\
&\lesssim\left\|\left\{\mathbf{1}_{[j_P,\infty)}(i)\mathbf{1}_{3P}(\cdot)
W^{\frac{1}{p}}(\cdot)\vec{t}_{i}(\cdot)\right\}_{i\in\mathbb Z}\right\|_{L\dot A_{p,q}}
\lesssim|P|^{\tau}\left\|\vec{t}\right\|_{\dot a^{0,\tau}_{p,q}(W)},
\end{align*}
where, depending on whether we are in the Besov-type or the Triebel--Lizorkin-type case,
we used Lemma \ref{ad B1} or \ref{ad F1} to
$\{\mathbf{1}_{[j_P,\infty)}(i)\mathbf{1}_{3P}(\cdot)\vec{t}_{i}(\cdot)\}_{i\in\mathbb Z}$
in place of $\{\vec{t}_i\}_{i\in\mathbb Z}$ and then the definition of $\dot a^{0,\tau}_{p,q}(W)$.

Next, we estimate $J_i $.
Let $i\in\{j_P+k,\ldots,j_P+k_++l-1\}$ be fixed.
Notice that, when $x\in P$, there holds
\begin{equation}\label{5.25x}
B\left(x,2^{l+k_+-i}\right)\subset3\cdot2^{j_P+k_++l-i}P
\text{ and }
j_P+k_++l-i\in[1,k_-+l].
\end{equation}
We now claim that
\begin{equation}\label{165}
(J_i)^p\lesssim2^{(i-j_P-k_+-l)n}2^{d(j_P+k_++l-i+2)}
\int_{2^{j_P+k_++l-i+2}P}\left|W^{\frac{1}{p}}(y)\vec{t}_i(y)\right|^p\,dy,
\end{equation}
where $d$ is the $A_p$-dimension of $W$.
We consider the following two cases on $p$.

\emph{Case 1)} $p\in(0,1]$. In this case, since $a\leq p$, it follows from Jensen's inequality,
\eqref{5.25x}, and the definition of the $A_p$-dimension (Definition \ref{Ap dim}) that
\begin{align*}
(J_i)^p
&\leq\left\|\,\left[\fint_{B(\cdot,2^{l+k_+-i})}
\left|\mathbf{1}_P(\cdot)W^{\frac{1}{p}}(\cdot)\vec{t}_{i}(y)\right|^p\,dy
\right]^{\frac{1}{p}}\right\|_{L^p}^p\\
&\lesssim\int_P\fint_{3\cdot2^{j_P+k_++l-i}P}
\left\|W^{\frac{1}{p}}(x)W^{-\frac{1}{p}}(y)\right\|^p
\left|W^{\frac{1}{p}}(y)\vec{t}_i(y)\right|^p\,dy\,dx\\
&\lesssim\frac{|P|}{|2^{j_P+k_+ +l-i}P|}\int_{2^{j_P+k_++l-i+2}P}
\left[\fint_P\left\|W^{\frac{1}{p}}(x)W^{-\frac{1}{p}}(y)\right\|^p\,dx\right]
\left|W^{\frac{1}{p}}(y)\vec{t}_i(y)\right|^p\,dy\\
&\lesssim2^{(i-j_P-k_+-l)n}2^{d(j_P+k_++l-i+2)}
\int_{2^{j_P+k_+ +l-i+2}P}\left|W^{\frac{1}{p}}(y)\vec{t}_i(y)\right|^p\,dy.
\end{align*}
This finishes the proof of \eqref{165} in this case.

\emph{Case 2)} $p\in(1,\infty)$. In this case,
since $a\leq 1$, it follows from Jensen's inequality, \eqref{5.25x}, H\"older's inequality,
and the definition of the $A_p$-dimension (Definition \ref{Ap dim}) that
\begin{align*}
(J_i)^p
&\leq\left\|\fint_{B(\cdot,2^{l+k_+-i})}
\left|\mathbf{1}_P(\cdot)W^{\frac{1}{p}}(\cdot)\vec{t}_{i}(y)\right|\,dy\right\|_{L^p}^p\\
&\lesssim\int_P\left[\fint_{3\cdot2^{j_P+k_++l-i}P}
\left\|W^{\frac{1}{p}}(x)W^{-\frac{1}{p}}(y)\right\|
\left|W^{\frac{1}{p}}(y)\vec{t}_i(y)\right|\,dy\right]^p\,dx\\
&\lesssim\int_P\left[\fint_{3\cdot2^{j_P+k_++l-i}P}
\left\|W^{\frac{1}{p}}(x)W^{-\frac{1}{p}}(y)\right\|^{p'}\,dy\right]^{\frac{p}{p'}}
\fint_{3\cdot2^{j_P+k_++l-i}P}
\left|W^{\frac{1}{p}}(z)\vec{t}_i(z)\right|^p\,dz\,dx\\
&\lesssim\frac{|P|}{|2^{j_P+k_++l-i}P|}
\fint_P\left[\fint_{2^{j_P+k_++l-i+2}P}
\left\|W^{\frac{1}{p}}(x)W^{-\frac{1}{p}}(y)\right\|^{p'}\,dy\right]^{\frac{p}{p'}}\,dx\\
&\quad\times
\int_{2^{j_P+k_++l-i+2}P}
\left|W^{\frac{1}{p}}(z)\vec{t}_i(z)\right|^p\,dz\\
&\lesssim2^{(i-j_P-k_+-l)n}2^{d(j_P+k_++l-i+2)}
\int_{2^{j_P+k_++l-i+2}P}\left|W^{\frac{1}{p}}(z)\vec{t}_i(z)\right|^p\,dz.
\end{align*}
This finishes the proof of \eqref{165} in this case.

Using \eqref{165}, we conclude that
\begin{align*}
J_i
&\lesssim2^{(j_P+k_++l-i)\frac{d-n}{p}}
\left\|\mathbf{1}_{2^{j_P+k_++l-i+2}P}
W^{\frac{1}{p}}\vec{t}_i\right\|_{L^p}\\
&\leq2^{(j_P+k_++l-i)\frac{d-n}{p}}
\left|2^{j_P+k_++l-i+2}P\right|^{\tau}
\left\|\vec{t}\right\|_{\dot a^{0,\tau}_{p,q}(W)}
\sim2^{(j_P+k_++l-i)(\frac{d-n}{p}+n\tau)}
|P|^{\tau}\left\|\vec{t}\right\|_{\dot a^{0,\tau}_{p,q}(W)}\\
&\leq2^{(k_-+l)(\frac{d-n}{p}+n\tau)_+}|P|^{\tau}
\left\|\vec{t}\right\|_{\dot a^{0,\tau}_{p,q}(W)}
=2^{(k_-+l)n\widehat\tau}|P|^{\tau}
\left\|\vec{t}\right\|_{\dot a^{0,\tau}_{p,q}(W)},
\end{align*}
making use of the notation \eqref{tauJ} in the last step.
We have this bound for each of the terms in the sum over $i$ in \eqref{BF split},
and the total number of these terms is
$(j_P+k_++l)-(j_P+k)=k_-+l.$
Hence, we find that
\begin{align*}
&\left\|\left\{\left[\fint_{B(\cdot,2^{l+k_+-i})}
\left|\mathbf{1}_P(\cdot)W^{\frac{1}{p}}(\cdot)\vec{t}_{i}(y)\right|^a\,dy
\right]^{\frac{1}{a}}\right\}_{i\geq j_P+k}\right\|_{L\dot A_{p,q}}^r\\
&\quad\lesssim(1+k_-+l)\left[2^{(k_-+l)n\widehat\tau}|P|^{\tau}
\left\|\vec{t}\right\|_{\dot a^{0,\tau}_{p,q}(W)}\right]^r.
\end{align*}
Substituting the above estimate into \eqref{ABtx},
dividing by the factor $|P|^{\tau}$,
and then taking the supremum over all $P\in\mathscr{Q}$, we obtain
\begin{align*}
\left\|B\vec{t}\right\|_{\dot a^{0,\tau}_{p,q}(W)}^r
&\lesssim\sum_{k\in\mathbb{Z}}\sum_{l=0}^\infty(1+k_-+l)\\
&\quad\times\left[2^{-(E-\frac{n}{2})k_-}2^{-k_+(F+\frac{n}{2}-\frac{n}{a})}
2^{-(D-\frac{n}{a})l}2^{(k_-+l)n\widehat\tau}
\left\|\vec{t}\right\|_{\dot a^{0,\tau}_{p,q}(W)}\right]^r,
\end{align*}
and next it is easy to read off the conditions for convergence,
which requires that each of the summation variables $k_-$, $k_+$, and $l$
needs to have a negative coefficient in the exponent.
(Once this condition is satisfied, the polynomial factor in front is irrelevant.)
For $k_-$, it is immediate that we need $E>\frac{n}{2}+n\widehat\tau$.
For $k_+$, recalling that $\frac{n}{a}-J$ is either zero
or can be made as close to zero as we like by an appropriate choice of $a$,
we find that we need $F>J-\frac{n}{2}$.
On the coefficient of $l$, for similar reasons, we need $D>J+n\widehat\tau$.
These are precisely the assumptions of the present theorem when $s=0$, and hence
$\|B\vec{t}\|_{\dot a^{0,\tau}_{p,q}(W)}
\lesssim\|\vec{t}\|_{\dot a^{0,\tau}_{p,q}(W)}.$
This finishes the proof of Theorem \ref{ad BF}.
\end{proof}

\begin{remark}\label{ad BF rem}
Let us compare Theorem \ref{ad BF} with the various cases of Theorem \ref{ad FJ-YY}.
\begin{enumerate}[\rm(i)]
\item When $\tau=0$, $m=1$, and $W\equiv1$,
we have $\widehat\tau=0$ and hence, in this case,
Theorem \ref{ad BF} reduces to Theorem \ref{ad FJ-YY}\eqref{FJ 3.1}.
\item In the case when $\tau=0$ and a general $W\in A_p$,
we still have $\widehat\tau=0$ and hence, in this case,
Theorem \ref{ad BF} improves parts \eqref{Ro 1.10} and \eqref{FR 2.6}
of Theorem \ref{ad FJ-YY} because $\beta_W\in[n,\infty)$
(see, for instance, \cite[Remark 2.21]{bhyy1}).
More precisely, the conditions in both of them involve some extra non-negative terms,
in addition to those present in Theorem \ref{ad BF}.
\item For any $\tau\in[0,\infty)$, $m=1$, and $W\equiv1$, we have $d=0$;
hence $\widehat\tau=(\tau-\frac{1}{p})_+=0$ for any $\tau\leq\frac{1}{p}$,
and one can then find that, in this case, Theorem \ref{ad BF}
is just Theorem \ref{ad FJ-YY}\eqref{YY 4.1}
when $\tau\in[0,\frac{1}{p})$
or $\dot a^{s,\tau}_{p,q}(W)=\dot b^{s,\frac{1}{p}}_{p,q}(W)$ with $q\in(0,\infty)$,
but not necessarily in the remaining cases.
\item When $m=1$, Theorem \ref{ad FJ-YY}(iii) requires that,
for any cubes $Q,R\subset\mathbb{R}^n$ with $Q\subset R$,
$$
\frac{w(Q)}{w(R)}\lesssim\left(\frac{|Q|}{|R|}\right)^{\alpha_1}
$$
and hence, for any $j\in\mathbb{Z}_+$,
$$
\frac{w(Q)}{w(2^jQ)}\lesssim2^{-jn\alpha_1}=2^{j(d-n)},
$$
where $d:=n(1-\alpha_1)$. This, together with \cite[Proposition 2.37]{bhyy1},
further implies that $w$ has the $A_p$-dimension $d$.
Therefore, the assumptions of Theorem \ref{ad BF} read as
\begin{align}\label{260}
D>J+n\left(\tau-\frac{\alpha_1}{p}\right)_+,\
E>\frac{n}{2}+s+n\left(\tau-\frac{\alpha_1}{p}\right)_+,\text{ and }
F>J-\frac{n}{2}-s,
\end{align}
where $J$ is the same as in \eqref{J}
and $\alpha_1,\alpha_2\in(0,\infty)$ satisfying $\alpha_1\leq\alpha_2$
are the same as in Theorem \ref{ad FJ-YY}.
Comparing with the assumptions of Theorem \ref{ad FJ-YY}(iii), that is,
$D>J+\varepsilon_0,$ $E>\frac{n}{2}+s+\frac{\varepsilon_0}{2},$ and
$F>J-\frac{n}{2}-s+\frac{\varepsilon_0}{2},$
since
$$
\varepsilon_0:=\max\left\{\frac{n(\alpha_2-\alpha_1)}{p},\
2n\left(\tau-\frac{\alpha_1}{p}\right)\right\}
\geq2n\left(\tau-\frac{\alpha_1}{p}\right)_+,
$$
\eqref{260} obviously relax the assumptions on $D$, $E$, and $F$.
\end{enumerate}
\end{remark}

{\color{red}
\section{Sharpness in Besov-Type Spaces with $\tau=\frac1p$}\label{sec sharp 1p}
}

As we saw in Sections \ref{sec sharp Besov} and \ref{sec sharp TL}, the conditions of Theorem \ref{ad BF} are sharp
both in the full range of Besov spaces $\dot b_{p,q}^s=\dot b_{p,q}^{s,0}$,
as well as in those Triebel--Lizorkin spaces
$\dot f_{p,q}^s=\dot f_{p,q}^{s,0}$ with $q\geq\min\{1,p\}$.
While $\tau=0$ in both these cases, the following lemma will imply the
sharpness of Theorem \ref{ad BF} also in a range of spaces with $\tau=\frac1p$.
In contrast to the previous examples that were given in the unweighted case,
the following example will be a genuinely weighted one.
The weight in question is a usual scalar-valued power weight;
this is of course naturally identified with a matrix weight
whose values are scalar multiples of the identity matrix.

\begin{lemma}\label{sharp2}
Let $p\in(0,\infty)$, $q\in(1,\infty]$, $s\in\mathbb R$, $d\in[0,n)$,
and $W(x):=|x|^{-d}$ for any $x\in\mathbb R^n\setminus\{\mathbf{0}\}$.
Let $(D,E,F)\in\mathbb R^3$, and suppose that every $(D,E,F)$-almost diagonal matrix $B$
is bounded on $\dot a_{p,q}^{s,\frac1p}(W)$. Then
\begin{equation}\label{262}
D>n+\frac{d}{p},\
E>\frac{n}{2}+s+\frac{d}{p},
\text{ and }
F>\frac{n}{2}-s.
\end{equation}
\end{lemma}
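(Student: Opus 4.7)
The plan is to test the specific $(D,E,F)$-almost diagonal matrix $B^{DEF}=\{b^{DEF}_{Q,R}\}_{Q,R\in\mathscr{Q}}$ from \eqref{bDEF} against three carefully chosen sequences, following the philosophy of the sharpness Lemmas \ref{ad Besov sharp}, \ref{ad Besov sharp infty}, and \ref{ad Triebel sharp}. The novel ingredient is the power weight $W(x)=|x|^{-d}$, which is a scalar $A_p$-matrix weight of $A_p$-dimension exactly $d$; since $B^{DEF}$ is itself $(D,E,F)$-almost diagonal, our hypothesis grants its boundedness on $\dot a^{s,\frac1p}_{p,q}(W)$. We handle the weight either by arranging the test coefficients to cancel the factor $|x|^{-d/p}$ appearing in the Morrey-type norm (for the $D$- and $E$-bounds), or by placing the test support bounded away from the origin so that the weight is harmless (for the $F$-bound). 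Throughout, fix a unit vector $\vec e\in\mathbb C^m$.

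For $D>n+d/p$, we use the level-$0$ test $\vec t_Q:=(1+|x_Q|)^{d/p}\vec e$ for $Q\in\mathscr{Q}_0$ and $\vec t_Q:=\vec{\mathbf 0}$ otherwise. The coefficient $(1+|x_Q|)^{d/p}$ exactly compensates the weight decay in the local integrals $\int_Q|x|^{-d}\,dx\sim (1+|x_Q|)^{-d}$, and since the sum over $\widehat P$ starts at $j'=j_P$, only cubes $P$ with $\ell(P)\geq 1$ contribute; a direct check then yields $\|\vec t\|_{\dot a^{s,\frac1p}_{p,q}(W)}\lesssim 1$. The boundedness of $B^{DEF}$ forces the series $(B^{DEF}\vec t)_{Q_{0,\mathbf 0}}=\vec e\sum_{R\in\mathscr{Q}_0}(1+|x_R|)^{-D+d/p}$ to converge absolutely, yielding $D>n+d/p$.

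For $E>n/2+s+d/p$, a single-delta test yields only the non-strict inequality, so we use the superposition $\vec t^{(N)}$ defined by $\vec t^{(N)}_Q:=2^{-j(s+n/2+d/p)}\vec e$ whenever $Q=Q_{j,\mathbf 0}$ for some $j\in\{0,1,\ldots,N\}$ and $\vec t^{(N)}_Q:=\vec{\mathbf 0}$ otherwise. A direct computation on cubes $P=Q_{j_P,\mathbf 0}$ centered at the origin yields $\|\vec t^{(N)}\|_{\dot a^{s,\frac1p}_{p,q}(W)}\sim 1$ uniformly in $N$. For the output, we take $P:=Q_{J,\mathbf 0}$ with $J>N$; at every level $j'\geq J$, all sources contribute constructively since $2^j|x_Q|\lesssim 1$ for $Q\subset P$ and $j\leq N$, giving $(B^{DEF}\vec t^{(N)})_Q\sim 2^{-j'E}\sum_{j=0}^N 2^{j(E-(s+n/2+d/p))}$. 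At the critical threshold $E=n/2+s+d/p$, the inner sum equals $N+1$, and a careful accounting of scales yields $\|B^{DEF}\vec t^{(N)}\|_{\dot a^{s,\frac1p}_{p,q}(W)}\gtrsim N+1$, which contradicts the boundedness of $B^{DEF}$ as $N\to\infty$.

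For $F>n/2-s$, we take the test $\vec t^{(N)}_Q:=|Q|^{s/n+1/2}\vec e$ for $Q\in\mathscr{Q}_N$ with $1/2<|x_Q|<1$ and $\vec t^{(N)}_Q:=\vec{\mathbf 0}$ otherwise, whose support is confined to a region bounded away from the origin; there the weight $|x|^{-d}\sim 1$, so $\|\vec t^{(N)}\|_{\dot a^{s,\frac1p}_{p,q}(W)}\sim 1$ uniformly in $N$. Taking $P:=Q_{0,\mathbf 0}$ and summing, for $q\in(1,\infty)$, over the intermediate levels $j'\in\{0,1,\ldots,N-1\}$, a Riemann-sum approximation of $\sum_R(1+2^{j'}|x_Q-x_R|)^{-D}$ for $x_Q$ in this region leads to a contribution at level $j'$ of order $2^{(j'-N)(s+F-n/2)}$, and the $\ell^q$-sum of these quantities diverges with $N\to\infty$ unless $s+F-n/2>0$; the case $q=\infty$ is handled by a companion test at all scales analogous to \eqref{exa4}. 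The main obstacle throughout is passing from non-strict to strict inequalities: for $D$ this is automatic from absolute convergence; for $E$ the logarithmic amplification $S_N=N+1$ at the critical threshold is the essential mechanism; and for $F$ the crucial choice is to place the source bounded away from the origin, so that the weight does not amplify $\|\vec t^{(N)}\|$ at the same rate as $\|B^{DEF}\vec t^{(N)}\|$, as would happen for a source located at the origin.
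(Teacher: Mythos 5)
Your approach is correct in spirit but takes a noticeably different route from the paper's, and the difference is worth spelling out. The paper first replaces the $W$-norm by the equivalent reducing-operator norms, using $\dot b_{p,q}^{s,1/p}(W)=\dot b_{p,q}^{s,1/p}(\mathbb A)$ and $\dot f_{p,q}^{s,1/p}(W)=\dot f^s_{\infty,q}(\mathbb A)$ together with the explicit reducing operator $A_{Q_{j,k}}\sim 2^{jd/p}(1+|k|)^{-d/p}$ from \cite[Corollary 2.42]{bhyy1}. With that in hand, it uses three very short tests, each supported on an \emph{infinite} tail of cubes so that the strict inequality drops out directly from absolute convergence of a single series $(B\vec t)_{Q_{0,\mathbf 0}}$: for $D$, the scale-one cubes $\vec t_Q=A_Q^{-1}\vec e$; for $E$, the full dyadic tower at the origin $\vec t_Q=|Q|^{s/n+1/2}A_Q^{-1}\vec e$ for $x_Q=\mathbf 0$; for $F$, the all-cube test $\vec t_Q=(1+|j_Q|)^{-1}|Q|^{s/n+1/2}A_Q^{-1}\vec e$, where the logarithmic damping is what needs $q>1$ to keep the norm finite. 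By contrast, you work directly with the $W$-quasi-norm, insert the weight-cancelling factor by hand into the test coefficients (which is fine since $A_Q^{-1}$ is scalar for this power weight, and matches what the paper does implicitly), and use \emph{finite} tests with a two-parameter limit $N\to\infty$, $J\to\infty$ in the spirit of Lemma \ref{ad Besov sharp}/\ref{ad Triebel sharp}. Your $D$-argument is essentially identical to the paper's. Your $E$-argument needs both the "single-delta" step (to get $E\ge n/2+s+d/p$ via $J\to\infty$) and the logarithmic amplification at the threshold (to rule out equality via $N\to\infty$); you mention both but only spell out the second, so a reader must supply the first. Your $F$-argument, placing the finite mass in the annulus $\frac12<|x_Q|<1$ so that the weight is comparable to a constant there, is a clean idea that avoids having to track the weight at all; it is genuinely different from the paper's logarithmically-damped infinite tail. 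On balance the paper's infinite-tail tests are shorter and avoid the threshold/strict case split entirely, at the price of having to verify norm finiteness of a test with infinitely many nonzero entries (where $q>1$ enters); your finite tests trivialize norm finiteness but require the two-limit amplification and a more careful accounting in the Triebel--Lizorkin ($L^p\ell^q$) case, where you should be explicit about passing between the $W$-norm and the $\mathbb A$-norm rather than computing $L^p\ell^q$ integrals of $|x|^{-d}$ across many scales by hand.
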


\begin{proof}
From \cite[Lemmas 2.39 and 2.40]{bhyy1}, we infer that
$W\in A_p$ has the $A_p$-dimension $d$.
Let $\mathbb A=\{A_Q\}_{Q\in\mathscr Q}$ be a sequence of reducing operators of order $p$ for $W$.
By \cite[Corollary 2.42]{bhyy1}, we conclude that,
for any $j\in\mathbb Z$, $k\in\mathbb Z^n$, $Q:=Q_{j,k}\in\mathscr{Q}$,
and $\vec z\in\mathbb{C}^m$,
\begin{align*}
\left|A_Q\vec z\right|^p
&\sim\fint_Q\left|W^{\frac{1}{p}}(x)\vec z\right|^p\,dx
=\fint_Q|x|^{-d}\,dx\left|\vec z\right|^p\\
&\sim\left[\ell(Q)+|c_Q|\right]^{-d}\left|\vec z\right|^p
\sim2^{jd}(1+|k|)^{-d}\left|\vec z\right|^p.
\end{align*}
Therefore, without loss of generality, we may assume
$A_Q:=2^{j\frac{d}{p}}(1+|k|)^{-\frac{d}{p}}$
for each $Q:=Q_{j,k}\in\mathscr{Q}$ with $j\in\mathbb Z$ and $k\in\mathbb Z^n$.
Moreover, we consider the $(D,E,F)$-almost diagonal matrix
$B^{DEF}:=\{b^{DEF}_{Q,R}\}_{Q,R\in\mathscr{Q}}$ from \eqref{bDEF}.

By \cite[Theorem 3.27 and Corollary 4.18]{bhyy1}, we find that
\begin{equation*}
\dot b_{p,q}^{s,\frac{1}{p}}(W)=\dot b_{p,q}^{s,\frac{1}{p}}(\mathbb{A})
\text{ and }
\dot f_{p,q}^{s,\frac{1}{p}}(W)=\dot f_{p,q}^{s,\frac{1}{p}}(\mathbb A)=\dot f_{\infty,q}^{s}(\mathbb{A}).
\end{equation*}
Hence, by the assumption, $B$ is bounded on either $\dot b_{p,q}^{s,\frac{1}{p}}(\mathbb{A})$
or $\dot f_{p,q}^{s,\frac{1}{p}}(\mathbb A)=\dot f_{\infty,q}^{s}(\mathbb{A})$.

We first estimate the range of $D$.
Let $\vec e\in\mathbb{C}^m$ satisfy $|\vec e|=1$
and $\vec t:=\{\vec t_Q\}_{Q\in\mathscr{Q}}$, where, for any $Q\in\mathscr{Q}$,
$$
\vec t_Q:=\begin{cases}
A_Q^{-1}\vec e&\text{if }\ell(Q)=1,\\
\vec{\mathbf{0}}&\text{otherwise}.
\end{cases}
$$
For a sequence like this that is nonzero on one ``level'' only,
it is easy to find that the Besov-type and the Triebel--Lizorkin-type norms
coincide, and we have
$$
\left\|\vec t\right\|_{\dot a_{p,q}^{s,\frac{1}{p}}(\mathbb{A})}^p
=\sup_{P\in\mathscr{Q},\,|P|\geq1}\frac{1}{|P|}
\sum_{Q\in\mathscr{Q}_0,\,Q\subset P}
\left|A_Q\vec t_Q\right|^p
=1.
$$
Hence $(B\vec t)_{Q_{0,\mathbf{0}}}$ makes sense.
From this, it follows that
$$
\infty
>\sum_{R\in\mathscr{Q}}\left|b_{Q_{0,\mathbf{0}},R}\vec t_R\right|
=\sum_{k\in\mathbb{Z}^n}\left|b_{Q_{0,\mathbf{0}},Q_{0,k}}\vec t_{Q_{0,k}}\right|
=\sum_{k\in\mathbb{Z}^n}\left(1+|k|\right)^{-(D-\frac{d}{p})}
$$
and hence $ D>n+\frac{d}{p}.$

Now, we estimate the range of $E$.
Let $\vec e\in\mathbb{C}^m$ satisfy $|\vec e|=1$
and $\vec t:=\{\vec t_Q\}_{Q\in\mathscr{Q}}$, where, for any $Q\in\mathscr{Q}$,
$$
\vec t_{Q}:=\begin{cases}
|Q|^{\frac{s}{n}+\frac12}A_Q^{-1}\vec e&\text{if }x_Q=\mathbf{0},\\
\vec{\mathbf{0}}&\text{otherwise}.
\end{cases}
$$
Then, using the definitions of $\vec t$ and the relevant quasi-norms, we obtain
\begin{align*}
\left\|\vec t\right\|_{\dot b_{p,q}^{s,\frac{1}{p}}(\mathbb{A})}
&=\sup_{P\in\mathscr{Q},\,x_P=\mathbf{0}}\frac{1}{|P|^{\frac{1}{p}}}
\left\{\sum_{j=j_P}^\infty2^{jsq}
\left[|Q|^{1-\frac{p}{2}}|Q|^{(\frac{s}{n}+\frac12)p}
\right]^{\frac{q}{p}}\right\}^{\frac{1}{q}}\\
&=\sup_{P\in\mathscr{Q},\,x_P=\mathbf{0}}\frac{1}{|P|^{\frac{1}{p}}}
\left(\sum_{j=j_P}^\infty2^{-jn\frac{q}{p}}\right)^{\frac{1}{q}}
\sim1,
\end{align*}
as well as
$$
\left\|\vec t\right\|_{\dot f^s_{\infty,q}(\mathbb A)}
=\sup_{P\in\mathscr{Q},\,x_P=\mathbf{0}}\left[
\frac{1}{|P|}\sum_{j=j_P}^\infty|Q_{j,\mathbf{0}}|\right]^{\frac{1}{q}}
\sim1 \text{ when } q\in(1,\infty)
$$
and
$$
\left\|\vec t\right\|_{\dot f^s_{\infty,\infty}(\mathbb A)}
=\sup_{Q\in\mathscr{Q}}|Q|^{-(\frac{s}{n}+\frac12)}\left|A_Q\vec t_Q\right|
=1.
$$
Thus, $(B\vec t)_{Q_{0,\mathbf{0}}}$ is well defined in each case, which further implies that
$$
\infty
>\sum_{R\in\mathscr{Q}}\left|b_{Q_{0,\mathbf{0}},R}\vec t_R\right|
=\sum_{i\in\mathbb{Z}}\left|b_{Q_{0,\mathbf{0}},Q_{i,\mathbf{0}}}\vec t_{Q_{i,\mathbf{0}}}\right|
=\sum_{i=-\infty}^02^{i(E-s-\frac{n}{2}-\frac{d}{p})}
$$
and hence
$E>\frac{n}{2}+s+\frac{d}{p}.$

Finally, we estimate the range of $F$.
Let $\vec e\in\mathbb{C}^m$ satisfy $|\vec e|=1$
and $\vec t:=\{\vec t_Q\}_{Q\in\mathscr{Q}}$, where, for any $Q\in\mathscr{Q}$,
$\vec t_Q:=(1+|j_Q|)^{-1}|Q|^{\frac{s}{n}+\frac12}A_Q^{-1}\vec e.$
Then, from the definitions of $\vec t$ and the relevant quasi-norms, we deduce that
\begin{align*}
\left\|\vec t\right\|_{\dot b_{p,q}^{s,\frac{1}{p}}(\mathbb{A})}
&=\sup_{P\in\mathscr{Q}}\frac{1}{|P|^{\frac{1}{p}}}
\left\{\sum_{j=j_P}^\infty2^{jsq}(1+|j|)^{-q}
\left[\sum_{Q\in\mathscr{Q}_j,\,Q\subset P}|Q|^{1-\frac{p}{2}}|Q|^{(\frac{s}{n}+\frac12)p}
\right]^{\frac{q}{p}}\right\}^{\frac{1}{q}}\\
&=\sup_{P\in\mathscr{Q}}
\left[\sum_{j=j_P}^\infty(1+|j|)^{-q}\right]^{\frac{1}{q}}
\sim1,
\end{align*}
as well as
$$
\left\|\vec t\right\|_{\dot f^s_{\infty,q}(\mathbb A)}
=\sup_{P\in\mathscr{Q}}\left[\sum_{j=j_P}^\infty(1+|j|)^{-q}\right]^{\frac{1}{q}}
\sim 1 \text{ when } q\in(1,\infty)
$$
and
$$
\left\|\vec t\right\|_{\dot f^s_{\infty,\infty}(\mathbb A)}
=\sup_{Q\in\mathscr{Q}}|Q|^{-(\frac{s}{n}+\frac12)}\left|A_Q\vec t_Q\right|
=1.
$$
Thus, $(B\vec t)_{Q_{0,\mathbf{0}}}$ is well defined in each case, and hence
\begin{align*}
\infty
&>\sum_{R\in\mathscr{Q}}\left| b_{Q_{0,\mathbf{0}},R}\vec t_R\right|
=\sum_{i=1}^\infty\sum_{k\in\mathbb{Z}^n}
\left(1+2^{-i}|k|\right)^{-D}2^{-iF}(1+i)^{-1}
2^{-i(s+\frac{n}{2})}2^{-i\frac{d}{p}}(1+|k|)^{\frac{d}{p}}\\
&=\sum_{i=1}^\infty2^{-i(F+s+\frac{n}{2}+\frac{d}{p})}
(1+i)^{-1}\sum_{k\in\mathbb{Z}^n}
\left(1+2^{-i}|k|\right)^{-D}(1+|k|)^{\frac{d}{p}}\\
&\sim\sum_{i=1}^\infty2^{-i(F+s-\frac{n}{2})}(1+i)^{-1},
\end{align*}
which further implies that $F>\frac{n}{2}-s$.
This finishes the proof of Lemma \ref{sharp2}.
\end{proof}

\begin{remark}
When $s\in\mathbb R$, $p\in[1,\infty)$, $q\in(1,\infty]$,
and $\dot a^{s,\tau}_{p,q}(W)=\dot b_{p,q}^{s,\frac{1}{p}}(W)$,
the assumptions \eqref{262} are just the assumptions \eqref{ad new},
which further implies that \eqref{262} are the sharp almost diagonal conditions
for $\dot b_{p,q}^{s,\frac{1}{p}}(W)$ in this case.
\end{remark}

\section{Critical and Supercritical Spaces}
\label{special}

By a different argument, we here improve Theorem \ref{77}
in the critical and the supercritical cases as follows:

\begin{theorem}\label{ad iff cor}
Let $p\in(0,\infty)$, $q\in(0,\infty]$, $s\in\mathbb R$, $\tau\in[\frac1p,\infty)$, and
\begin{equation*}
J_\tau:=\begin{cases}
n &\displaystyle \text{if }\tau\in\left(\frac1p,\infty\right), \\
\displaystyle \frac{n}{\min\{1,q\}} & \displaystyle\text{if }\tau=\frac1p.
\end{cases}
\end{equation*}
If $\tau=\frac1p$ and $q\in(0,\infty)$,
we also assume that $\dot a^{s,\tau}_{p,q}(W)=\dot f^{s,\frac1p}_{p,q}(W)$
is of Triebel--Lizorkin-type. Then the following implications hold.
\begin{enumerate}[\rm(i)]
\item If three numbers $D,E,F\in\mathbb R$ satisfy
\begin{align}\label{173}
D>J_\tau+\frac{d}{p},\
E>\frac{n}{2}+s+n\left(\tau-\frac1p\right)+\frac{d}{p},\text{ and }
F>J_\tau-\frac{n}{2}-s-n\left(\tau-\frac1p\right),
\end{align}
then all $(D,E,F)$-almost diagonal operators are bounded on $\dot a^{s,\tau}_{p,q}(W)$ for all $W\in A_p$ with $A_p$-dimension $d$.
\item If $\tau\in(\frac1p,\infty)$ or $q\in(1,\infty]$,
then \eqref{173} is also necessary for this conclusion.
\end{enumerate}
\end{theorem}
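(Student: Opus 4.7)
The proof hinges on the identification $\dot a^{s,\tau}_{p,q}(W)=\dot f^{s_0}_{\infty,q_0}(\mathbb{A})$ provided by \cite[Corollary 4.18]{bhyy1}, where $(s_0,q_0)=(s+n(\tau-\frac1p),\infty)$ in the supercritical case $\tau>\frac1p$ and $(s_0,q_0)=(s,q)$ in the critical Triebel--Lizorkin case $\tau=\frac1p$ (the hypothesis on $\dot a^{s,\tau}_{p,q}(W)$ for $q<\infty$ is exactly what makes this second identification available). Both directions of the argument reduce to questions about almost diagonal operators on the single scale $\dot f^{s_0}_{\infty,q_0}(\mathbb{A})$, and the conditions \eqref{173} translate, after absorbing a factor $\frac{d}{p}$ into the reducing operators and shifting $s$ to $s_0$, into the natural conditions $D>J_\tau$, $E>\frac n2+s_0$, and $F>J_\tau-\frac n2-s_0$ on the unweighted space $\dot f^{s_0}_{\infty,q_0}$.

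For part (i), I would first reduce to $s=0$ via Lemma \ref{s=0 enough}. As a preliminary step, I would establish, in a separate lemma, the sharp boundedness of $(D,E,F)$-almost diagonal matrices on the unweighted $\dot f^{s_0}_{\infty,q_0}$---the natural extension to $p=\infty$ of \cite[Theorem 3.3]{fj90} announced in \cite[p.\,81]{fj90}---by combining shifted dyadic systems (Lemma \ref{shifted}) with a Carleson-type adaptation of the ideas from Section \ref{sec ad TL}. From this unweighted result I would pass to the weighted one via the pointwise scalar majorization
\begin{equation*}
|A_Q(B\vec t)_Q|\leq\sum_R\widetilde b_{Q,R}|A_R\vec t_R|,\quad\widetilde b_{Q,R}:=|b_{Q,R}|\,\|A_QA_R^{-1}\|,
\end{equation*}
which, together with the identity $\|\vec t\|_{\dot f^{s_0}_{\infty,q_0}(\mathbb{A})}=\|u\|_{\dot f^{s_0}_{\infty,q_0}}$ for $u:=\{|A_R\vec t_R|\}_R$, reduces matters to the scalar boundedness of $\widetilde B$ on the unweighted space.

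The main obstacle is that a direct application of Lemma \ref{22 precise} to $\widetilde b_{Q,R}$ introduces the unwanted factor $\frac{\widetilde d}{p'}$, producing only Theorem \ref{77}-level bounds rather than \eqref{173}. The resolution I foresee is a careful regrouping of the sum over $R$ in the Carleson-type quasi-norm of $\dot f^{s_0}_{\infty,q_0}(\mathbb{A})$: the contribution from pairs $(Q,R)$ with $\ell(R)<\ell(Q)$ should be absorbable into an integral average over an enlarged cube whose size is comparable to $\ell(Q)$, the enlargement being controlled solely by the primary $A_p$-dimension $d$. This mirrors the localization argument that was central to the proof of Theorem \ref{ad BF} in Section \ref{sec ad general}, where the $\frac{\widetilde d}{p'}$ factor was likewise avoided.

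For part (ii), the necessity of \eqref{173} under the assumption $\tau>\frac1p$ or $q>1$ would be deduced by adapting the counterexamples of Lemma \ref{sharp2}. With the power weight $W(x):=|x|^{-d}$ (which has $A_p$-dimension $d$ and admits reducing operators $A_{Q_{j,k}}\sim 2^{jd/p}(1+|k|)^{-d/p}$), I would test the basic matrix $B^{DEF}$ from \eqref{bDEF} against three carefully normalized sequences---one supported on a single cube, one supported along the dyadic chain at the origin, and one with slow decay like $(1+|j_Q|)^{-1}$---to extract in turn the three sharp lower bounds on $D$, $E$, and $F$ in \eqref{173}. The hypothesis $\tau>\frac1p$ or $q>1$ is essential here, as it is precisely what ensures that these test sequences have finite $\dot f^{s_0}_{\infty,q_0}(\mathbb{A})$ quasi-norm while simultaneously forcing $(B^{DEF}\vec t)_{Q_{0,\mathbf 0}}$ to diverge unless the corresponding inequality in \eqref{173} is satisfied.
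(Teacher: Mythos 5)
Your two structural ideas for part~(i)---the reduction $\dot a^{s,\tau}_{p,q}(W)=\dot f^{s_0}_{\infty,q_0}(\mathbb A)$ from \cite[Corollary 4.18]{bhyy1} and the scalar majorization $|A_Q(B\vec t)_Q|\le\sum_R|b_{Q,R}|\,\|A_QA_R^{-1}\|\,|A_R\vec t_R|$---are indeed the entry points of the paper's argument, and your sketch of part~(ii) via the power weight $W(x)=|x|^{-d}$, its reducing operators $A_{Q_{j,k}}\sim2^{jd/p}(1+|k|)^{-d/p}$, and the three test sequences is precisely Lemma~\ref{sharp2}. The gap is in how you handle the $\frac{\widetilde d}{p'}$ factor in part~(i). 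You correctly identify it as the obstacle, but the resolution you foresee---``a careful regrouping\ldots\ absorbable into an integral average over an enlarged cube whose size is comparable to $\ell(Q)$, the enlargement being controlled solely by $d$''---is not a proof and does not match what actually makes the argument close. The localization in the proof of Theorem~\ref{ad BF} that you invoke applies the $A_p$-dimension definition (Definition~\ref{Ap dim}) directly to $L^p$ integrals with respect to a fixed dyadic parent $P$; it is structurally unavailable in the Carleson quasi-norm of $\dot f^{s_0}_{\infty,q_0}(\mathbb A)$, where one sums $|A_Q(B\vec t)_Q|^{q_0}$ over $Q\subset P$ rather than integrating $|W^{1/p}(\cdot)(B\vec t)_j(\cdot)|^p$ over $P$, and where the reducing operators $A_Q$ rather than the weight $W^{1/p}$ appear.

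The paper's actual resolution is an elementary case split (Lemma~\ref{110}). If $p\ge\min\{1,q\}$, then $\min\{1,p,q\}=\min\{1,q\}$, so $J=J_\tau$ and $n\widehat\tau=\frac dp$, and the present hypotheses~\eqref{173} coincide with those of Theorem~\ref{ad BF}, which never uses $\widetilde d$; nothing further is needed. If instead $p<\min\{1,q\}\le 1$, then $p<1$, so by Definition~\ref{def Ap dims} one has $\widetilde d=0$ and $\Delta=\frac dp$, and Lemma~\ref{22 precise} reduces to
\begin{equation*}
\left\|A_QA_R^{-1}\right\|\lesssim\max\left\{\left[\frac{\ell(R)}{\ell(Q)}\right]^{d/p},1\right\}\left[1+\frac{|c_Q-c_R|}{\ell(Q)\vee\ell(R)}\right]^{d/p}
\end{equation*}
with no $\frac{\widetilde d}{p'}$ at all. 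The lower and upper triangular parts of $B$ are then summed directly in $\dot f^s_{\infty,q}(\mathbb A)$ using Lemma~\ref{one fits all} and the geometric bounds of Lemmas~\ref{33y}--\ref{33}; shifted dyadic systems and the Nazarov lemma (used for $\dot f^s_{p,q}(W)$ with $p<\infty$) play no role. Without the observation that $p<\min\{1,q\}$ forces $\widetilde d=0$, your proposal does not contain an argument that closes the gap.
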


Before proving Theorem \ref{ad iff cor}, we recall the
averaging matrix-weighted Triebel--Lizorkin sequence space
(see \cite[Definition 4.4]{bhyy1}).

\begin{definition}\label{def TLpInfty}
Let $s\in\mathbb{R}$, $q\in(0,\infty]$,
$p\in(0,\infty)$, $W\in A_p$, and
$\mathbb{A}:=\{A_Q\}_{Q\in\mathscr{Q}}$ be a sequence of
reducing operators of order $p$ for $W$.
The \emph{homogeneous averaging matrix-weighted Triebel--Lizorkin sequence space}
$\dot f_{\infty,q}^s(\mathbb{A})$ is defined to be the set of all sequences
$\vec t:=\{\vec t_Q\}_{Q\in\mathscr{Q}}\subset\mathbb{C}^m$ such that
$$
\left\|\vec t\right\|_{\dot f_{\infty,q}^s(\mathbb{A})}
:=\sup_{P\in\mathscr{Q}}\left[\fint_P\sum_{j=j_P}^\infty
2^{jsq}\left|A_j(x)\vec t_j(x)\right|^q\,dx\right]^{\frac{1}{q}}<\infty,
$$
where, for any $j\in\mathbb Z$, $A_j$ and $\vec t_j$
are the same as, respectively, in \eqref{Aj} and \eqref{vec tj}.
\end{definition}

To prove Theorem \ref{ad iff cor},
we first consider the case $\dot f^{s,\frac1p}_{p,q}(W)$.

\begin{lemma}\label{110}
Let $p\in(0,\infty)$, $q\in(0,\infty]$, $s\in\mathbb R$, $\tau=\frac1p$, and
let $W\in A_p$ have the $A_p$-dimension $d\in[0,n)$.
Suppose that $B$ is $(D,E,F)$-almost diagonal with parameters
$D>J_\tau+\frac{d}{p},$ $E>\frac{n}{2}+s+\frac{d}{p},$ and
$F>J_\tau-\frac{n}{2}-s,$
where $J_\tau:=\frac{n}{\min\{1,q\}}$.
Then $B$ is bounded on $\dot f^{s,\frac1p}_{p,q}(W)$.
\end{lemma}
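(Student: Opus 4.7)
The plan is to prove Lemma \ref{110} in three stages. First, by Lemma \ref{s=0 enough}, it suffices to treat $s=0$, because shifting from a $(D,E,F)$-almost diagonal matrix to $(D,E-s,F+s)$ preserves the hypotheses (the $s$ terms in the bounds on $E$ and $F$ are exactly absorbed) and moves the boundedness question to the corresponding $s=0$ sequence space. Second, since we are in the critical regime $\tau=\frac{1}{p}$ of a Triebel--Lizorkin-type space, by the combination of \cite[Theorem 3.27]{bhyy1} and \cite[Corollary 4.18]{bhyy1} we have $\dot f^{0,1/p}_{p,q}(W)=\dot f^0_{\infty,q}(\mathbb A)$, so it is enough to prove that $B$ is bounded on the averaging space $\dot f^0_{\infty,q}(\mathbb A)$, whose norm is the Carleson-type supremum over $P\in\mathscr Q$ of $[\fint_P\sum_{j\geq j_P}|A_j(x)\vec t_j(x)|^q\,dx]^{1/q}$.

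Fix such a $P$. The starting pointwise bound is $|A_Q(B\vec t)_Q|\leq\sum_R|b_{Q,R}|\,\|A_QA_R^{-1}\|\,|A_R\vec t_R|$, so writing $u_R:=|A_R\vec t_R|$ reduces the problem to bounding $\widetilde B u$, where $\widetilde b_{Q,R}:=|b_{Q,R}|\,\|A_QA_R^{-1}\|$, in the scalar space $\dot f^0_{\infty,q}$ with $\|u\|_{\dot f^0_{\infty,q}}=\|\vec t\|_{\dot f^0_{\infty,q}(\mathbb A)}$. However, a naive application of Lemma \ref{22 precise} (inserting the full bound with the tail factor $(\ldots)^\Delta$) would make $\widetilde B$ only $(D-\Delta,E-d/p,F-\widetilde d/p')$-almost diagonal and require $D>J_\tau+\Delta$, $E>n/2+d/p$, $F>J_\tau-n/2+\widetilde d/p'$; this is strictly stronger than what we want, in particular containing spurious $\widetilde d/p'$ terms in both $D$ and $F$. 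To avoid this, I will work directly on the averaging space rather than passing to its scalar version.

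The direct argument splits the sum over $R$ in $(B\vec t)_Q=\sum_Rb_{Q,R}\vec t_R$ for each $Q\subset P$ with $j_Q\geq j_P$ into three regimes relative to $P$: (a) small and near, where $\ell(R)\leq\ell(P)$ and $R\subset 3P$; (b) small and far, where $\ell(R)\leq\ell(P)$ and $R\not\subset 3P$; and (c) large, where $\ell(R)>\ell(P)$. For (a), the localized estimate follows from the same technology as Lemma \ref{ad F1} applied inside $3P$ (shifted dyadic systems plus Lemma \ref{Nazarov}), so that the $A_p$-condition and the $A_p$-dimension $d$ of $W$ yield an $L^p$-gain proportional to $d/p$ in $E$ only. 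For (b), the polynomial tail from the $D$-assumption, after incurring just a $d/p$ cost coming from Lemma \ref{22 precise} restricted to the case $\ell(Q)\leq\ell(R)$, is summable provided $D>J_\tau+d/p$. For (c), since $A_R$ is essentially constant at the scale of $P$ when $R\supset P$, only the $E$-decay in $b_{Q,R}$ interacts with the growth of $A_R^{-1}$, and the $A_p$-dimension bound for $W$ (not its dual) again produces only the term $d/p$; note that (c) involves the factor $[\ell(Q)/\ell(R)]^E$ with $\ell(Q)\leq\ell(R)$, which is the $E$-regime, never the $F$-regime.

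The main obstacle is precisely this asymmetric accounting: the bound on $F$ in the statement carries no dimensional correction, and this will hold because $F$ only governs interactions with $\ell(R)<\ell(Q)$, all of which occur entirely inside the Carleson window of $P$ in regime (a), where the relevant $\|W^{1/p}(x)A_R^{-1}\|$ can be integrated against the $A_p$-condition at scale $R$ without invoking $\widetilde d$. Assembling the three contributions, raising to power $q$, summing over $j\geq j_P$, and averaging over $P$, I expect to obtain $\|B\vec t\|_{\dot f^0_{\infty,q}(\mathbb A)}\lesssim\|\vec t\|_{\dot f^0_{\infty,q}(\mathbb A)}$, with the implicit constant absorbing the convergent geometric series in the decay parameters, thereby completing the proof once the $s=0$ reduction is undone.
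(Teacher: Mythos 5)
Your opening two steps (reduce to $s=0$; use $\dot f^{0,1/p}_{p,q}(W)=\dot f^0_{\infty,q}(\mathbb A)$) are sound, but the core of the argument misidentifies where the difficulty lies and, as a result, proposes a decomposition whose key claim does not hold.

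The paper's proof first observes that Theorem~\ref{ad BF} already yields Lemma~\ref{110} when $p\geq\min\{1,q\}$ (since then $J=J_\tau$ and $n\widehat\tau=d/p$). A direct argument is therefore needed only when $p<\min\{1,q\}\leq1$, and in that regime the ``spurious $\widetilde d/p'$ terms'' you are trying to avoid simply do not exist: by Definition~\ref{def Ap dims}, $p<1$ forces $\widetilde d=0$ and $\Delta=d/p$, so the bound of Lemma~\ref{22 precise} already reduces to \eqref{22 orig} with only $d/p$ exponents. You never invoke this reduction, and your argument is therefore attempting to circumvent a problem that is absent precisely when the direct argument is required.

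This misdiagnosis creates concrete gaps in the three-way split by $\ell(R)$ relative to $\ell(P)$. In regime (b) you claim the cost of Lemma~\ref{22 precise} is ``just a $d/p$'', but the $(1+|c_Q-c_R|/(\ell(Q)\vee\ell(R)))^\Delta$ factor carries the exponent $\Delta=d/p+\widetilde d/p'$, which equals $d/p$ only when $p\leq1$; for larger $p$ your estimate is not what you assert. More seriously, the explanation for the absence of a dimensional correction in $F$ is false as stated: cubes $R$ with $\ell(R)<\ell(Q)$ (the $F$-regime) need \emph{not} lie inside $3P$ --- they may well fall in your regime (b), far from $P$ --- so the claim that all $F$-interactions ``occur entirely inside the Carleson window'' does not hold. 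The paper instead separates the sum by the upper/lower triangular split in $\ell(R)\gtrless\ell(Q)$, which cleanly isolates the $E$- and $F$-regimes, and then uses the simplified \eqref{22 orig} bound; in the lower-triangular part the maximum $\max\{[\ell(R)/\ell(Q)]^{d/p},1\}=1$ and only the $D$-parameter is reduced, which is why no correction appears in $F$. Finally, invoking ``the same technology as Lemma~\ref{ad F1} applied inside $3P$'' for regime (a) is not directly applicable: Lemma~\ref{ad F1} estimates the $L^p\ell^q$ norm for the non-averaging space $\dot f^s_{p,q}(W)$, whereas $\dot f^0_{\infty,q}(\mathbb A)$ is a Carleson-type quantity for which the paper instead carries out a direct summation using Lemma~\ref{one fits all}, Lemma~\ref{33y}, Lemma~\ref{33}, and \eqref{33z}, keeping track of the near and far cubes within each triangular piece.
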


\begin{proof}
If $p\geq\min\{1,q\}$, then the assumptions of Theorem \ref{ad BF}
coincide with those of the present lemma that we are proving:
Firstly, $\min\{1,p,q\}=\min\{1,q\}$ so that
$J(\dot f^{s,\frac{1}{p}}_{p,q})=\frac{n}{\min\{1,q\}}=J_\tau$.
Secondly, when $\tau=\frac1p$, it follows that $n\widehat\tau=\frac{d}{p}$.
Substituting these value into the conditions on $(D,E,F)$ in Theorem \ref{ad BF},
we find that the present lemma is just a reformulation of Theorem \ref{ad BF} in this case.

When $p<\min\{1,q\}$,
Theorem \ref{ad BF} needs stronger assumptions.
Thus, we present a separate argument using the identity
$\dot f^{s,\frac{1}{p}}_{p,q}(W)=\dot f^s_{\infty,q}(\mathbb A)$
provided by \cite[Corollary 4.18]{bhyy1}, where $\mathbb A:=\{A_Q\}_{Q\in\mathscr{Q}}$
is a sequence of reducing operators of order $p$ for $W$.
From this point on, we work with the space $\dot f^s_{\infty,q}(\mathbb A)$.

Our argument will make critical use of Lemma \ref{22 precise},
and it is useful to make the following observation:
While the bound of Lemma \ref{22 precise} in principle depends on
the triple of $A_p$-dimensions $(d,\widetilde d,\Delta)$ of $W\in A_p$,
we are now working under the assumption that $p<1\wedge q\leq 1$.
Thus, according to Definition \ref{def Ap dims},
we have $\widetilde d=0$ and $\Delta=\frac{d}{p}$.
Thus, the conclusion \eqref{22 p<1} of Lemma \ref{22 precise} now takes the form
\begin{equation}\label{22 orig}
\left\|A_QA_R^{-1}\right\|
\leq C\max\left\{\left[\frac{\ell(R)}{\ell(Q)}\right]^{\frac{d}{p}},1\right\}
\left[1+\frac{|c_Q-c_R|}{\ell(Q)\vee\ell(R)}\right]^{\frac{d}{p}}.
\end{equation}

With this observation made, we proceed to the actual argument.
For any sequence $\vec t:=\{\vec t_R\}_{R\in\mathscr{Q}}\in\dot f^s_{\infty,q}(\mathbb A)$
and any $Q\in\mathscr{Q}$, we split $B$ into its upper and lower triangular parts
$$
\left(B_0\vec t\right)_Q
:=\sum_{R\in\mathscr{Q},\,\ell(R)\geq\ell(Q)}b_{Q,R}\vec t_R
\text{ and }
\left(B_1\vec t\right)_Q
:=\sum_{R\in\mathscr{Q},\,\ell(R)<\ell(Q)}b_{Q,R}\vec t_R,
$$
where $\{b_{Q,R}\}_{Q,R\in\mathscr{Q}}$ is the matrix related to $B$.
To show that $B$ is bounded on $\dot f^s_{\infty,q}(\mathbb A)$,
it suffices to prove that both $B_0$ and $B_1$ are bounded on it.

\textbf{The upper triangular part:}
We first establish the boundedness of $B_0$ on $\dot f^s_{\infty,q}(\mathbb A)$.
Let $u_R:=|R|^{-(\frac{s}{n}+\frac12)}|A_R\vec t_R|$, and notice that
\begin{equation}\label{5320}
\sup_{R\in\mathscr{Q}}u_R
=\left\|\vec t\right\|_{f^s_{\infty,\infty}(\mathbb A)}
\leq\left\|\vec t\right\|_{f^s_{\infty,q}(\mathbb A)}.
\end{equation}

From \eqref{22 orig}, we infer that, for any $j\in\mathbb{Z}$ and $Q\in\mathscr{Q}_j$,
\begin{align}\label{5321}
|Q|^{-(\frac{s}{n}+\frac12)}\left|A_Q\left(B_0\vec t\right)_Q\right|
&\leq|Q|^{-(\frac{s}{n}+\frac12)}\sum_{R\in\mathscr{Q},\,\ell(R)\geq\ell(Q)}|b_{Q,R}|
\left\|A_QA_R^{-1}\right\|\left|A_R\vec t_R\right|\\
&\lesssim|Q|^{-(\frac{s}{n}+\frac12)}\sum_{R\in\mathscr{Q},\,\ell(R)\geq\ell(Q)}
\left[1+\frac{|x_Q-x_R|}{\ell(R)}\right]^{-D}
\left[\frac{\ell(Q)}{\ell(R)}\right]^E\notag \\
&\quad\times
\left[\frac{\ell(R)}{\ell(Q)}\right]^{\frac{d}{p}}
\left[1+\frac{|x_Q-x_R|}{\ell(R)}\right]^{\frac{d}{p}}|R|^{s+\frac{n}{2}}u_R\notag \\
&=\sum_{R\in\mathscr{Q},\,\ell(R)\geq\ell(Q)}
\left[\frac{\ell(Q)}{\ell(R)}\right]^{\widetilde{E}}
\left[1+\frac{|x_Q-x_R|}{\ell(R)}\right]^{-\widetilde{D}}u_R,\notag
\end{align}
where
\begin{equation}\label{5322}
\widetilde D:=D-\frac{d}{p}>\frac{n}{\min\{1,q\}}\geq n
\text{ and }
\widetilde E:=E-s-\frac{n}{2}-\frac{d}{p}>0.
\end{equation}
By Lemma \ref{one fits all} and \eqref{5321}, we conclude that
\begin{align*}
|Q|^{-(\frac{s}{n}+\frac12)}\left|A_Q\left(B_0\vec t\right)_Q\right|
&\lesssim \left\{\sum_{R\in\mathscr{Q},\,\ell(R)\geq\ell(Q)}
\left[\frac{\ell(Q)}{\ell(R)}\right]^{\widetilde{E}\min\{1,q\}}
\left[1+\frac{|x_Q-x_R|}{\ell(R)}\right]^{-\widetilde{D}\min\{1,q\}}u_R^q\right\}^{\frac1q} \\
&\quad\times \left\{\sum_{R\in\mathscr{Q},\,\ell(R)\geq\ell(Q)}
\left[\frac{\ell(Q)}{\ell(R)}\right]^{\widetilde{E}}
\left[1+\frac{|x_Q-x_R|}{\ell(R)}\right]^{-\widetilde{D}}\right\}^{(1-\frac1q)_+}  \\
&=:\mathrm{I}^{\frac1q}\times\mathrm{II}^{(1-\frac1q)_+},
\end{align*}
where
\begin{equation*}
\mathrm{II}=\sum_{i=-\infty}^{j}2^{(i-j)\widetilde{E}}
\sum_{k\in\mathbb Z^n}\left(1+|2^i x_Q-k|\right)^{-\widetilde{D}}\sim 1
\end{equation*}
by \eqref{5322} and \eqref{33z}. For $q=\infty$, we obtain simply
\begin{equation*}
|Q|^{-(\frac{s}{n}+\frac12)}\left|A_Q\left(B_0\vec t\right)_Q\right|
\lesssim\mathrm{II}\times\sup_{R\in\mathscr Q}u_R
\sim \|\vec t\|_{\dot f_{\infty,\infty}^s(\mathbb A)}
\end{equation*}
and hence $\|B\vec t\|_{\dot f_{\infty,\infty}^s(\mathbb A)}\lesssim\|\vec t\|_{\dot f_{\infty,\infty}^s(\mathbb A)}$ by taking the supremum over $Q\in\mathscr Q$.

For any $q\in(0,\infty)$, we find that
\begin{align}\label{167}
\left\|B_0\vec t\right\|_{\dot f^s_{\infty,q}(\mathbb A)}^q
&=\sup_{P\in\mathscr{Q}}
\frac{1}{|P|}\sum_{j=j_P}^\infty\sum_{Q\in\mathscr{Q}_j,\,Q\subset P}|Q|
\left[|Q|^{-(\frac{s}{n}+\frac12)}\left|A_Q\left(B_0\vec t\right)_Q\right|\right]^q\\
&\lesssim\sup_{P\in\mathscr{Q}}\frac{1}{|P|}\sum_{j=j_P}^\infty
\sum_{i=-\infty}^{j}2^{(i-j)\widetilde{E}\min\{1,q\}}I_1(i,j),\notag
\end{align}
where, for each $i,j\in\mathbb Z$,
$$
I_1(i,j):=\sum_{R\in\mathscr{Q}_i}\sum_{Q\in\mathscr{Q}_j,\,Q\subset P}|Q|
\left(1+2^i|x_Q-x_R|\right)^{-\widetilde{D}\min\{1,q\}}(u_R)^q.
$$
Notice that $\widetilde{D}\min\{1,q\}>n$ by \eqref{5322}.
From \eqref{33z} and \eqref{5320},
we then deduce that, for any $j\in\mathbb{Z}$ with $j\geq j_P$
and for any $i\in\mathbb{Z}$ with $i<j_P$,
\begin{align*}
I_1(i,j)
&\leq\sum_{R\in\mathscr{Q}_i}\sum_{Q\in\mathscr{Q}_j,\,Q\subset P}|Q|
\left(1+2^i|x_Q-x_R|\right)^{-\widetilde{D}\min\{1,q\}}
\left\|\vec t\right\|_{\dot f^s_{\infty,\infty}(\mathbb A)}^q\\
&=\sum_{Q\in\mathscr{Q}_j,\,Q\subset P}|Q|
\sum_{k\in\mathbb{Z}^n}\left(1+|2^i x_Q-k|\right)^{-\widetilde{D}\min\{1,q\}}
\left\|\vec t\right\|_{\dot f^s_{\infty,\infty}(\mathbb A)}^q\\
&\sim\sum_{Q\in\mathscr{Q}_j,\,Q\subset P}|Q|
\left\|\vec t\right\|_{\dot f^s_{\infty,\infty}(\mathbb A)}^q
=|P|\left\|\vec t\right\|_{\dot f^s_{\infty,\infty}(\mathbb A)}^q.
\end{align*}
On the other hand, from Lemmas \ref{33y} and \ref{33} and from \eqref{33x}, we infer that,
for any $j\in\mathbb{Z}$ with $j\geq j_P$
and for any $i\in\{j_P,j_P+1,\ldots,j\}$,
\begin{align*}
I_1(i,j)
&\sim\sum_{R\in\mathscr{Q}_i,\,R\subset 3P}
\int_P\frac{1}{(1+2^i|x-x_R|)^{\widetilde{D}\min\{1,q\}}}\,dx\,(u_R)^q\\
&\quad+\sum_{k\in\mathbb{Z}^n,\,\|k\|_\infty\geq2}
\sum_{R\in\mathscr{Q}_i,\,R\subset P+k\ell(P)}
\sum_{Q\in\mathscr{Q}_j,\,Q\subset P}|Q|
\left(2^{i-j_P}|k|\right)^{-\widetilde{D}\min\{1,q\}}(u_R)^q \\
&=:I_2(i,j)+I_3(i,j),
\end{align*}
where
$I_2(i,j)\lesssim\sum_{R\in\mathscr{Q}_i,\,R\subset 3P}|R|(u_R)^q$
and, using again that  $\widetilde{D}\min\{1,q\}>n$ by \eqref{5322},
\begin{align*}
I_3(i,j)&= 2^{(i-j_P)(n-\widetilde{D}\min\{1,q\})}\sum_{k\in\mathbb{Z}^n,
\,\|k\|_\infty\geq2}|k|^{-\widetilde{D}\min\{1,q\}}
\sum_{R\in\mathscr{Q}_i,\,R\subset P+k\ell(P)}|R|(u_R)^q\\
&\leq 2^{(i-j_P)(n-\widetilde{D}\min\{1,q\})}
\sum_{k\in\mathbb{Z}^n,\,\|k\|_\infty\geq2}
|k|^{-\widetilde{D}\min\{1,q\}}|P|
\left\|\vec t\right\|_{\dot f^s_{\infty,\infty}(\mathbb A)}^q \\
&\lesssim 2^{(i-j_P)(n-\widetilde{D}\min\{1,q\})} |P|
\left\|\vec t\right\|_{\dot f^s_{\infty,\infty}(\mathbb A)}^q.
\end{align*}
Substituting the different bounds for $I_1(i,j)$ into \eqref{167}, we obtain
\begin{align*}
\left\|B_0\vec t\right\|_{\dot f^s_{\infty,q}(\mathbb A)}
&\lesssim\left\|\vec t\right\|_{\dot f^s_{\infty,\infty}(\mathbb A)}
\sup_{P\in\mathscr{Q}}
\left\{\sum_{j=j_P}^\infty\sum_{i=-\infty}^{j_P-1}
2^{(i-j)\widetilde{E}\min\{1,q\}}\right\}^{\frac{1}{q}}\\
&\quad+\sup_{P\in\mathscr{Q}}
\left\{\frac{1}{|P|}\sum_{j=j_P}^\infty\sum_{i=j_P}^{j}
2^{(i-j)\widetilde{E}\min\{1,q\}}\sum_{R\in\mathscr{Q}_i,\,R\subset 3P}
|R|(u_R)^q\right\}^{\frac{1}{q}}\\
&\quad+\left\|\vec t\right\|_{\dot f^s_{\infty,\infty}(\mathbb A)}
\sup_{P\in\mathscr{Q}}
\left\{\sum_{j=j_P}^\infty\sum_{i=j_P}^{j}
2^{(i-j)\widetilde{E}\min\{1,q\}}2^{(i-j_P)(n-\widetilde{D}\min\{1,q\})}\right\}^{\frac{1}{q}} \\
&=:\mathrm K_1+\mathrm K_2+\mathrm K_3,
\end{align*}
where a straightforward estimation of geometric series, using \eqref{5322}, gives
\begin{equation*}
\mathrm K_1+\mathrm K_3
\lesssim \left\|\vec t\right\|_{\dot f^s_{\infty,\infty}(\mathbb A)}
\leq \left\|\vec t\right\|_{\dot f^s_{\infty,q}(\mathbb A)}.
\end{equation*}
Finally, by rearranging the order of summation,
\begin{align*}
\mathrm K_2
&=\sup_{P\in\mathscr{Q}}
\left\{\frac{1}{|P|}\sum_{i=j_P}^\infty\sum_{j=i}^\infty
2^{(i-j)\widetilde{E}q}\sum_{R\in\mathscr{Q}_i,\,R\subset 3P}
|R|(u_R)^q\right\}^{\frac{1}{q}}\\
&\lesssim\sup_{P\in\mathscr{Q}}
\left\{\frac{1}{|P|}\sum_{i=j_P}^\infty\sum_{R\in\mathscr{Q}_i,\,R\subset 3P}
|R|(u_R)^q\right\}^{\frac{1}{q}}
\leq\sup_{P\in\mathscr{Q}}
\left\{\frac{1}{|P|}\sum_{R\in\mathscr{Q},\,R\subset 3P}
|R|(u_R)^q\right\}^{\frac{1}{q}}
\sim\left\|\vec t\right\|_{\dot f^s_{\infty,q}(\mathbb A)},
\end{align*}
which completes the proof of the boundedness of $B_0$ on $\dot f^s_{\infty,q}(\mathbb A)$.
Notice that only for term $\mathrm K_2$ did we need to
invoke the norm of $\dot f^s_{\infty,q}(\mathbb A)$,
while all other parts could be estimated even in
the larger space $\dot f^s_{\infty,\infty}(\mathbb A)$.

\textbf{The lower triangular part:}
Now, we show the boundedness of $B_1$ on $\dot f^s_{\infty,q}(\mathbb A)$.

Using the assumed almost diagonal estimates with Lemma \ref{22 precise}, we obtain,
for each $j\in\mathbb{Z}$ and $Q\in\mathscr{Q}_j$,
\begin{align*}
|Q|^{-(\frac{s}{n}+\frac12)}\left|A_Q\left(B_1\vec t\right)_Q\right|
&\leq|Q|^{-(\frac{s}{n}+\frac12)}\sum_{R\in\mathscr{Q},\,\ell(R)<\ell(Q)}
|b_{Q,R}|\left\|A_QA_R^{-1}\right\|\left|A_R\vec t_R\right|\\
&\lesssim|Q|^{-(\frac{s}{n}+\frac12)}\sum_{R\in\mathscr{Q},\,\ell(R)<\ell(Q)}
\left[1+\frac{|x_Q-x_R|}{\ell(Q)}\right]^{-D}
\left[\frac{\ell(R)}{\ell(Q)}\right]^F\\
&\quad\times\left[1+\frac{|x_Q-x_R|}{\ell(Q)}\right]^{\frac{d}{p}}  |R|^{\frac{s}{n}+\frac12}u_R \\
&=\sum_{R\in\mathscr{Q},\,\ell(R)<\ell(Q)}
\left[\frac{\ell(R)}{\ell(Q)}\right]^{\widetilde{F}}
\left[1+\frac{|x_Q-x_R|}{\ell(Q)}\right]^{-\widetilde D}u_R,
\end{align*}
where
\begin{equation}\label{5323}
\widetilde D:=D-\frac{d}{p}>J
\text{ and }
\widetilde F:=F+s+\frac{n}{2}>J=\frac{n}{\min\{1,q\}}\geq n.
\end{equation}
From Lemma \ref{one fits all}, we deduce that
\begin{align*}
|Q|^{-(\frac{s}{n}+\frac12)}\left|A_Q\left(B_1\vec t\right)_Q\right|
&\leq\left\{\sum_{R\in\mathscr{Q},\,\ell(R)<\ell(Q)}
\left[\frac{\ell(R)}{\ell(Q)}\right]^{\widetilde{F}\min\{1,q\}}
\left[1+\frac{|x_Q-x_R|}{\ell(Q)}\right]^{-\widetilde D\min\{1,q\}}u_R^q\right\}^{\frac1q} \\
&\quad\times\left\{\sum_{R\in\mathscr{Q},\,\ell(R)<\ell(Q)}
\left[\frac{\ell(R)}{\ell(Q)}\right]^{\widetilde{F}}
\left[1+\frac{|x_Q-x_R|}{\ell(Q)}\right]^{-\widetilde D}\right\}^{(1-\frac1q)_+} \\
&=:\mathrm{I}^{\frac1q}\times\mathrm{II}^{(1-\frac1q)_+}.
\end{align*}
Using \eqref{5323}, we find that
\begin{align*}
\mathrm{II}&=\sum_{i=j+1}^{\infty}2^{(j-i)\widetilde{F}}
\sum_{k\in\mathbb Z^n}\left(1+2^j|x_Q-2^{-i}k|\right)^{-\widetilde{D}} \\
&\sim\sum_{i=j+1}^{\infty}2^{(j-i)\widetilde{F}}\int_{\mathbb R^n}\left(1+2^j|x_Q-2^{-i}y|\right)^{-\widetilde{D}}\,dy
\sim\sum_{i=j+1}^{\infty}2^{(j-i)(\widetilde{F}-n)}\sim 1.
\end{align*}
For $q=\infty$, we obtain simply
\begin{equation*}
|Q|^{-(\frac{s}{n}+\frac12)}\left|A_Q\left(B_1\vec t\right)_Q\right|
\lesssim\mathrm{II}\times\sup_{R\in\mathscr Q}u_R\sim\left\|\vec t\right\|_{\dot f^s_{\infty,\infty}(\mathbb A)},
\end{equation*}
and hence $\|B_1\vec t\|_{\dot f^s_{\infty,\infty}(\mathbb A)}
\lesssim\|\vec t\|_{\dot f^s_{\infty,\infty}(\mathbb A)}$
by taking the supremum over $Q\in\mathscr Q$.

For any $q\in(0,\infty)$, we proceed with
\begin{align}\label{170}
\left\|B_1\vec t\right\|_{\dot f^s_{\infty,q}(\mathbb A)}^q
&=\sup_{P\in\mathscr{Q}}
\frac{1}{|P|}\sum_{j=j_P}^\infty\sum_{Q\in\mathscr{Q}_j,\,Q\subset P}|Q|
\left[|Q|^{-(\frac{s}{n}+\frac12)}\left|A_Q\left(B_1\vec t\right)_Q\right|\right]^q \\
&\lesssim\sup_{P\in\mathscr{Q}}
\frac{1}{|P|}\sum_{j=j_P}^\infty
\sum_{i=j+1}^{\infty}2^{(j-i)\widetilde{F}\min\{1,q\}}J_1(i,j),\notag
\end{align}
where, for any $i,j\in\mathbb Z$,
$$
J_1(i,j):=\sum_{R\in\mathscr{Q}_i}\sum_{Q\in\mathscr{Q}_j,\,Q\subset P}|Q|
\left(1+2^j|x_Q-x_R|\right)^{-\widetilde{D}\min\{1,q\}}(u_R)^q.
$$
From Lemmas \ref{33y} and \ref{33}, and \eqref{33x}, we infer that,
for any $j\in\mathbb{Z}$ with $j\geq j_P$ and for any $i\in\mathbb{Z}$ with $i>j$,
\begin{align*}
J_1(i,j)
&\sim\sum_{R\in\mathscr{Q}_i,\,R\subset3P}
\int_P\frac{1}{(1+2^j|x-x_R|)^{\widetilde{D}\min\{1,q\}}}\,dx\,(u_R)^q\\
&\quad+\sum_{k\in\mathbb{Z}^n,\,\|k\|_\infty\geq2}
\sum_{R\in\mathscr{Q}_i,\,R\subset P+k\ell(P)}
\sum_{Q\in\mathscr{Q}_j,\,Q\subset P}|Q|
\left(2^{j-j_P}|k|\right)^{-\widetilde{D}\min\{1,q\}}(u_R)^q\\
&\lesssim2^{(i-j)n}\sum_{R\in\mathscr{Q}_i,\,R\subset3P}|R|(u_R)^q\\
&\quad+2^{(i-j)n}2^{(j-j_P)(n-\widetilde{D}\min\{1,q\})}
\sum_{k\in\mathbb{Z}^n,\,\|k\|_\infty\geq2}|k|^{-\widetilde{D}\min\{1,q\}}
\sum_{R\in\mathscr{Q}_i,\,R\subset P+k \ell(P)}|R|(u_R)^q\\
&\sim2^{(i-j)n}\sum_{R\in\mathscr{Q}_i,\,R\subset3P}
|R|(u_R)^q+2^{(i-j)n}2^{(j-j_P)(n-\widetilde{D}\min\{1,q\}}|P|
\left\|\vec t\right\|_{\dot f^s_{\infty,\infty}(\mathbb A)}^q.
\end{align*}
Substituting into \eqref{170}, we conclude that
\begin{align*}
\left\|B_1\vec t\right\|_{\dot f^s_{\infty,q}(\mathbb A)}
&\lesssim\sup_{P\in\mathscr{Q}}
\left[\frac{1}{|P|}\sum_{j=j_P}^\infty\sum_{i=j+1}^\infty
2^{(j-i)(\widetilde{F}\min\{1,q\}-n)}\sum_{R\in\mathscr{Q}_i,\,R\subset3P}
|R|(u_R)^q\right]^{\frac{1}{q}}\\
&\quad+\left\|\vec t\right\|_{\dot f^s_{\infty,\infty}(\mathbb A)}
\sup_{P\in\mathscr{Q}}\left[\sum_{j=j_P}^\infty\sum_{i=j+1}^\infty
2^{(j-i)(\widetilde{F}\min\{1,q\}-n)}2^{(j-j_P)(n-\widetilde{D}\min\{1,q\})}\right]^{\frac{1}{q}}\\
&=:\mathrm L_1+\mathrm L_2.
\end{align*}
A straightforward estimation of geometric series, using \eqref{5323}, gives
$\mathrm L_2\lesssim\|\vec t\|_{\dot f^s_{\infty,\infty}(\mathbb A)}
\leq\|\vec t\|_{\dot f^s_{\infty,q}(\mathbb A)}.$
By rearranging the summation order, we also obtain
\begin{align*}
\mathrm L_1 &=\sup_{P\in\mathscr{Q}}
\left[\frac{1}{|P|}\sum_{i=j_P+1}^\infty\sum_{j=j_P}^{i-1}2^{(j-i)(\widetilde{F}\min\{1,q\}-n)}
\sum_{R\in\mathscr{Q}_i,\,R\subset3P}|R|(u_R)^q\right]^{\frac{1}{q}} \\
&\lesssim\sup_{P\in\mathscr{Q}}
\left[\frac{1}{|P|}\sum_{R\in\mathscr{Q},\,R\subset3P}|R|(u_R)^q\right]^{\frac{1}{q}}
\sim\left\|\vec t\right\|_{\dot f^s_{\infty,q}(\mathbb A)}.
\end{align*}
This finishes the proof of the boundedness of $B_1$
and hence Lemma \ref{110}.
\end{proof}

Now, we are ready to prove Theorem \ref{ad iff cor}.

\begin{proof}[Proof of Theorem \ref{ad iff cor}]
Let $B$ be $(D,E,F)$-almost diagonal with $D,E,F$ as in \eqref{173}.
We need to show the boundedness of $B$ on $\dot a^{s,\tau}_{p,q}(W)$.
The case that $\tau=\frac1p$ and $q<\infty$ is covered by Lemma \ref{110},
so it remains to consider $\tau>\frac1p$ or $(\tau,q)=(\frac1p,\infty)$,
which is precisely the supercritical case. In this case,
denoting by $\mathbb A=\{A_Q\}_{Q\in\mathscr Q}$
a sequence of reducing operators of order $p$ for $W$,
we deduce from \cite[Corollary 4.18]{bhyy1} that
\begin{equation}\label{1732}
\dot a^{s,\tau}_{p,q}(W)
=\dot f^{s+n(\tau-\frac1p)}_{\infty,\infty}(\mathbb A)
=\dot f^{s+n(\tau-\frac1p),\frac 1p}_{p,\infty}(W)
\text{ if }\begin{cases}
\tau>\frac1p\quad\text{ or } \\
(\tau,q)=(\frac1p,\infty),
\end{cases}
\end{equation}
where the space on the right is again in the scope of Lemma \ref{110},
with $q=\infty$ and $\widehat s:=s+n(\tau-\frac1p)$ in place of $s$.
According to Lemma \ref{110}, where $\frac{n}{\min\{1,q\}}=\frac{n}{\min\{1,\infty\}}=n$,
a sufficient condition for the boundedness of a $(D,E,F)$-almost diagonal operator
on this space is that
\begin{equation}\label{1734}
D>n+\frac{d}{p},\
E>\frac{n}{2}+\widehat s+\frac{d}{p},
\text{ and }
F>n-\frac{n}{2}-\widehat s.
\end{equation}
Substituting the value of $\widehat s$ and observing that $n=J_\tau$
in the supercritical case under consideration,
it is immediate that this coincides with \eqref{173},
which completes the proof of the sufficiency part of Theorem \ref{ad iff cor}.

Turning to the necessity, consider the $A_p$-matrix weight $W(x):=|x|^{-d}$
for any $x\in\mathbb R^n\setminus\{\mathbf{0}\}$
of the $A_p$-dimension $d\in[0,n)$, and suppose that
every $(D,E,F)$-almost diagonal operator is bounded on $\dot a^{s,\tau}_{p,q}(W)$.
If $\tau=\frac1p$ and $q>1$, then Lemma \ref{sharp2} implies that
\begin{equation}\label{1733}
D>n+\frac{d}{p},\
E>n+s+\frac{d}{p},
\text{ and }
F>n-\frac{n}{2}-s.
\end{equation}
On the other hand, for such $\tau$ and $q$, we have $n=\frac{n}{\min\{1,q\}}=J_\tau$,
so that \eqref{1733} coincide with \eqref{173}, proving its necessity in this case.

Let us finally consider the necessity of \eqref{173} when $\tau>\frac1p$.
In this case, we again make use of the identity \eqref{1732}.
Thus, we are assuming that every $(D,E,F)$-almost diagonal operator
is bounded on $\dot f^{s+n(\tau-\frac1p),\frac 1p}_{p,\infty}(W)$.
Then Lemma \ref{sharp2} applies again, now with $q=\infty>1$
and $\widehat s=s+n(\tau-\frac1p)$ in place of $s$,
and proves the necessity of \eqref{1734}. As before,
substituting the value of $\widehat s$ and observing that $n=J_\tau$
in the supercritical case under consideration,
it is immediate that this coincides with \eqref{173},
which completes the proof of the necessity part of \eqref{173}
and hence Theorem \ref{ad iff cor}.
\end{proof}

\begin{remark}\label{ad iff rem}
When $d=0$, the assumptions \eqref{173} of Theorem \ref{ad iff cor} are just
the assumptions \eqref{ad YY} of Theorem \ref{ad FJ-YY},
and hence Theorem \ref{ad iff cor} was known in the unweighted case.
But, even in this special case, the sharpness of these conditions seems to be new.
Moreover, Theorem \ref{ad iff cor} is a natural
matrix-weighted extension of the said unweighted result,
with sharp dependence on the $A_p$-dimension of the matrix weight under consideration.

It is also immediate to compare Theorem \ref{ad iff cor} with
our benchmark Theorem \ref{77}, which was obtained from the unweighted
Theorem \ref{ad FJ-YY} via a straightforward application of Lemma \ref{22 precise}.
Recalling that $\Delta=\frac{d}{p}+\frac{\widetilde d}{p'}$,
where $W\in A_p$ has $A_p$-dimensions $(d,\widetilde d,\Delta)$,
we find that the assumptions on both $D$ and $F$ in Theorem \ref{ad iff cor}
are better than the corresponding ones in Theorem \ref{77}
by the non-negative term $\frac{\widetilde d}{p'}$, of course only
in the critical and the supercritical regimes where Theorem \ref{ad iff cor} applies.
On the other hand, in the subcritical regime we have Theorem \ref{ad BF},
and it was already observed in Remark \ref{ad BF vs benchmark} that
Theorem \ref{ad BF} improves Theorem \ref{77} in this case. Thus,
Theorems \ref{ad BF} and \ref{ad iff cor} together improve Theorem \ref{77} in all cases.

Let us then compare \eqref{173} of Theorem \ref{ad iff cor} with the assumptions of Theorem \ref{ad BF}.
Spelling out the abbreviation $\widehat\tau:=(\tau-\frac{1-d/n}{p})_+$,
so that $n\widehat\tau=n(\tau-\frac{1}{p})+\frac{d}{p}=n(\tau-\frac{1}{p})_+ +\frac{d}{p}$
for any $\tau\in[\frac{1}{p},\infty)$, the latter reads as
\begin{equation*}
D>J+n\left(\tau-\frac{1}{p}\right)_+ +\frac{d}{p},\
E>\frac{n}{2}+s+n\left(\tau-\frac{1}{p}\right)_+ +\frac{d}{p},\text{ and }
F>J-\frac{n}{2}-s,
\end{equation*}
where $J$ is the same as in \eqref{J}.
Comparing with \eqref{173}, we find that the assumptions on $E$ are identical,
but \eqref{173} relax the assumptions on both $D$ and $F$
by means of subtracting the non-negative quantity $n(\tau-\frac{1}{p})_+$ and
replacing $J$ with the smaller $J_\tau$, so indeed Theorem \ref{ad iff cor} is always preferable over Theorem \ref{ad BF} in the critical and the supercritical spaces.
\end{remark}

\section{Synthesis}\label{summary}

Combining Theorems \ref{ad BF} and \ref{ad iff cor},
we obtain the following theorem
which is the final result about the boundedness of almost diagonal operators on $\dot a^{s,\tau}_{p,q}(W)$;
we omit the details.

\begin{theorem}\label{ad BF2}
Let $s\in\mathbb R$, $\tau\in[0,\infty)$, $p\in(0,\infty)$, $q\in(0,\infty]$,
and $d\in[0,n)$. Let $J$ and $J_\tau$ be the same as,
respectively, in \eqref{J} and \eqref{tildeJ}, and
\begin{equation}\label{tauJ2}
\widehat\tau:=\left[\left(\tau-\frac{1}{p}\right)+\frac{d}{np}\right]_+,\
\widetilde{J}:=J_{\tau}+\left[\left(n\widehat\tau\right)\wedge\frac{d}{p}\right],\text{ and }
\widetilde{s}:=s+n\widehat\tau.
\end{equation}
If $B$ is $(D,E,F)$-almost diagonal with
\begin{equation}\label{ad new2}
D>\widetilde{J},\
E>\frac{n}{2}+\widetilde{s},\text{ and }
F>\widetilde{J}-\frac{n}{2}-\widetilde{s},
\end{equation}
then $B$ is bounded on $\dot a^{s,\tau}_{p,q}(W)$
whenever $W\in A_p$ has the $A_p$-dimension $d$.
\end{theorem}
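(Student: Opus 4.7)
The plan is simply to verify that the hypotheses \eqref{ad new2} coincide, case by case, with the hypotheses of either Theorem \ref{ad BF} or Theorem \ref{ad iff cor}, so that the conclusion follows by invoking the appropriate one of those two theorems. No new analytic ingredient is needed; the work is purely arithmetic bookkeeping among the quantities $J$, $J_\tau$, $\widetilde J$, $\widehat\tau$, and $\widetilde s$ introduced in \eqref{tauJ2}, and the main obstacle is only making sure that the definition of $\widehat\tau$ here (which matches the one in Theorem \ref{ad BF}, since $\frac{1}{p}(1-\frac{d}{n})=\frac{1}{p}-\frac{d}{np}$) is treated consistently across cases.

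First I would split according to the trichotomy in \eqref{tildeJ}. In the \emph{subcritical case}, where $\tau<\frac{1}{p}$ or $\dot a^{s,\tau}_{p,q}(W)=\dot b^{s,1/p}_{p,q}(W)$ with $q<\infty$, we have $\tau\leq\frac{1}{p}$ and $J_\tau=J$. Hence $n\widehat\tau=[n(\tau-\frac{1}{p})+\frac{d}{p}]_+\leq\frac{d}{p}$, so $(n\widehat\tau)\wedge\frac{d}{p}=n\widehat\tau$ and therefore $\widetilde J=J_\tau+n\widehat\tau=J+n\widehat\tau$. Substituting this identity and $\widetilde s=s+n\widehat\tau$ into \eqref{ad new2} gives
$$
D>J+n\widehat\tau,\quad E>\frac{n}{2}+s+n\widehat\tau,\quad F>J-\frac{n}{2}-s,
$$
which are precisely the hypotheses of Theorem \ref{ad BF}. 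Applying that theorem yields the boundedness of $B$ on $\dot a^{s,\tau}_{p,q}(W)$ in this range.

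Next, in the \emph{critical and supercritical cases}, we have $\tau\geq\frac{1}{p}$, and either $\tau>\frac{1}{p}$, $(\tau,q)=(\frac{1}{p},\infty)$, or $\dot a^{s,\tau}_{p,q}(W)=\dot f^{s,1/p}_{p,q}(W)$ with $q<\infty$; in particular Theorem \ref{ad iff cor} applies. Here $n\widehat\tau=n(\tau-\frac{1}{p})+\frac{d}{p}\geq\frac{d}{p}$, hence $(n\widehat\tau)\wedge\frac{d}{p}=\frac{d}{p}$ and $\widetilde J=J_\tau+\frac{d}{p}$. A direct computation gives
\begin{align*}
\widetilde J-\frac{n}{2}-\widetilde s
&=J_\tau+\frac{d}{p}-\frac{n}{2}-s-n\Bigl(\tau-\frac{1}{p}\Bigr)-\frac{d}{p} \\
&=J_\tau-\frac{n}{2}-s-n\Bigl(\tau-\frac{1}{p}\Bigr),
\end{align*}
and $\frac{n}{2}+\widetilde s=\frac{n}{2}+s+n(\tau-\frac{1}{p})+\frac{d}{p}$. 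Substituting into \eqref{ad new2} produces exactly the hypotheses \eqref{173} of Theorem \ref{ad iff cor}, which then delivers the boundedness of $B$ on $\dot a^{s,\tau}_{p,q}(W)$ in the remaining cases. Together these two cases exhaust all admissible parameters, completing the proof of Theorem \ref{ad BF2}.
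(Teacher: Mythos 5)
Your proof is correct and follows exactly the route the paper itself indicates — the paper states that Theorem \ref{ad BF2} follows by "combining Theorems \ref{ad BF} and \ref{ad iff cor}" and omits the details, and your case split (subcritical handled by Theorem \ref{ad BF} after checking $\widetilde J = J + n\widehat\tau$; critical and supercritical handled by Theorem \ref{ad iff cor} after checking $\widetilde J = J_\tau + \frac{d}{p}$) is precisely the bookkeeping the authors left to the reader. The arithmetic in both cases checks out, and the trichotomy from \eqref{tildeJ} is partitioned correctly.
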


\begin{remark}
\begin{enumerate}[\rm(i)]
\item The assumptions of Theorem \ref{ad BF2} are sharp
whenever $\dot a^{s,\tau}_{p,q}(W)$ is one of the following:
\begin{enumerate}
\item any usual Besov space $\dot b^{s}_{p,q}(W)$ (by Lemma \ref{ad Besov sharp});
\item a usual Triebel--Lizorkin space $\dot f^{s}_{p,q}(W)$
with $q\in[\min\{1,p\},\infty]$ (by Lemma \ref{ad Triebel sharp});
\item a critical space $\dot f^{s,\frac1p}_{p,q}(W)$ with $q\in(1,\infty)$ (by Theorem \ref{ad iff cor});
\item any supercritical space, namely with $\tau\in(\frac1p,\infty)$
or $(\tau,q)=(\frac1p,\infty)$ (by Theorem \ref{ad iff cor}).
\end{enumerate}

\item By Remarks \ref{ad BF rem} and \ref{ad iff rem}, we find that
Theorem \ref{ad BF2} improves all cases of the previously known Theorem \ref{ad FJ-YY}, as well as its straightforward matrix-valued extension in Theorem \ref{77}.

\item\label{ad BF2 remark3} Similarly to Remark \ref{ad conv},
we have a slightly stronger estimate than the boundedness of $B$; that is,
\begin{equation*}
\left\|\left\{2^{js}\sum_{Q\in\mathscr{Q}_j}\widetilde{\mathbf{1}}_Q(\cdot)
\sum_{R\in\mathscr{Q}}\left|H_j(\cdot)b_{Q,R}\vec{t}_R\right|
\right\}_{j\in\mathbb Z}\right\|_{L\dot A_{p,q}^{\tau}}
\leq C\left\|\vec{t}\right\|_{\dot a^{s,\tau}_{p,q}(W)},
\end{equation*}
where $ L\dot A_{p,q}^{\tau}\in\{ L\dot B_{p,q}^\tau, L\dot F_{p,q}^\tau\}$
is the same as in \eqref{LApq} and,
for any $j\in\mathbb Z$, $H_j=W^{\frac{1}{p}}$ or
$H_j=A_j$ with $A_j$ the same as in \eqref{Aj}.
(The difference with the mere boundedness of $B$ is that
we can take the norms $|\cdot|$ in $\mathbb C^m$ inside the summation.)
\end{enumerate}
\end{remark}

Next, motivated by Theorem \ref{ad BF2},
we extend Definition \ref{def ad 0} as follows.

\begin{definition}
Let $s\in\mathbb R$, $\tau\in[0,\infty)$, $p\in(0,\infty)$, $q\in(0,\infty]$, and $d\in[0,n)$.
An infinite matrix $B:=\{b_{Q,R}\}_{Q,R\in\mathscr{Q}}\subset\mathbb{C}$
is said to be \emph{$\dot a^{s,\tau}_{p,q}(d)$-almost diagonal}
if it is $(D,E,F)$-almost diagonal with $D,E,F$ the same as in \eqref{ad new2}.
For $d=0$, we use $\dot a^{s,\tau}_{p,q}$-almost diagonal
as a synonym to $\dot a^{s,\tau}_{p,q}(0)$-almost diagonal.
\end{definition}

\begin{remark}
With this notation, Theorem \ref{ad BF2} says that
an $\dot a^{s,\tau}_{p,q}(d)$-almost diagonal operator
is bounded on $\dot a^{s,\tau}_{p,q}(W)$,
whenever $W\in A_p$ has the $A_p$-dimension $d$.

Here, of course, ``$\dot a^{s,\tau}_{p,q}(d)$'',
unlike $\dot a^{s,\tau}_{p,q}(W)$, is not a space,
but just a notation for the collection of the various parameters.
This notation is in the same spirit as the concept of
$\mathbf{ad}^{\alpha,q}_p(\beta_W)$ of \cite[Definition 2.5]{fr21},
where the parameter $\beta_W$ is the doubling exponent of the doubling matrix weight $W$ of order $p$,
which is attached to the parameters of a Triebel--Lizorkin space under consideration.

This terminology is preferable over, say,
``$\dot a^{s,\tau}_{p,q}(W)$-almost diagonal'' for the following reasons.
Firstly, this concept only depends on the dimension $d$, but not otherwise on $W$.
Secondly, there might not be a unique $d$ attached to a given $W\in A_p$.
It is clear that, if $W$ has some $A_p$-dimension $d_0\in[0,n)$,
it also has the $A_p$-dimension $d$ for every $d\in[d_0,n)$,
but it could also happen that the set of admissible $A_p$-dimensions
has the form $(d_0,n)$, so that there is no minimal value of $d$
[see \cite[Proposition 2.43(ii)]{bhyy1}.

It is easy to show that, for $\tau=d=0$,
the concept of $\dot a^{s,0}_{p,q}(0)$-almost diagonal
agrees with the concept of $\dot a^s_{p,q}$-almost diagonal from Definition \ref{def ad 0}.
\end{remark}

We conclude with the following easy but useful observation that
allows us to make use of known results
about almost diagonal operators on $\dot a^s_{p,q}$,
without a need to reprove everything in our generalised setting.
An illustration of the application of this lemma
is given in the subsequent Corollary \ref{AB ad}.

\begin{lemma}\label{ad vs old}
Let $s\in\mathbb R$, $\tau\in[0,\infty)$,
$p\in(0,\infty)$, $q\in(0,\infty]$, and $d\in[0,n)$.
Let both $\widetilde J\in[n,\infty)$ and $\widetilde s\in\mathbb R$
be the same as in \eqref{tauJ2}, and let
\begin{equation}\label{hatr}
\widetilde r:=\frac{n}{\widetilde{J}}\in(0,1].
\end{equation}
Let $\widetilde p\in(0,\infty)$ and $\widetilde q\in(0,\infty]$ be any numbers such that
\begin{equation}\label{hatpq}
\begin{cases}
\widetilde p=\widetilde r&\text{if }\dot a=\dot b,\\
\widetilde p\wedge\widetilde q=\widetilde r&\text{if }\dot a=\dot f.
\end{cases}
\end{equation}
For an infinite matrix $B$, the following are equivalent:
\begin{enumerate}[\rm(i)]
\item\label{ad orig}$B$ is $\dot a^{s,\tau}_{p,q}(d)$-almost diagonal;
\item\label{ad hatpq}$B$ is $\dot a^{\widetilde s}_{\widetilde p,\widetilde q}$-almost diagonal;
\item\label{ad hatr}$B$ is $\dot a^{\widetilde s}_{\widetilde r,\widetilde r}$-almost diagonal.
\end{enumerate}
\end{lemma}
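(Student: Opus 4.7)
My plan is to reduce the claim to a direct verification that the three almost-diagonal conditions each amount to the same $(D,E,F)$ constraints as in \eqref{ad new2}. The proof is essentially bookkeeping because Definition \ref{def ad 0} and the newly introduced $\dot a^{s,\tau}_{p,q}(d)$-almost diagonal concept share the template $D>J_*,\ E>\tfrac{n}{2}+s_*,\ F>J_*-\tfrac{n}{2}-s_*$, differing only in the choices of $J_*$ and $s_*$. Under \eqref{hatpq} together with \eqref{hatr}, the relevant $J_*$-parameters all collapse to the same number, namely $\widetilde J$, and the shift $s_*$ is the same $\widetilde s$ throughout.

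First I would record the observation that $\widetilde J\in[n,\infty)$ forces $\widetilde r=n/\widetilde J\in(0,1]$, so $\min\{1,\widetilde r\}=\widetilde r$. Then I would unfold \eqref{ad new2} to restate (i) as the triple $D>\widetilde J,\ E>\tfrac n2+\widetilde s,\ F>\widetilde J-\tfrac n2-\widetilde s$. Next, to handle (iii), I would invoke Definition \ref{def ad 0} on the space $\dot a^{\widetilde s}_{\widetilde r,\widetilde r}$: since $\dot b^{\widetilde s}_{\widetilde r,\widetilde r}=\dot f^{\widetilde s}_{\widetilde r,\widetilde r}$ (so the ambiguity in $\dot a$ is harmless), the value of $J$ in \eqref{J} is $n/\min\{1,\widetilde r\}=n/\widetilde r=\widetilde J$, and the corresponding $(D,E,F)$ triple is exactly the one characterizing (i).

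For (ii), I would split into the two cases of \eqref{hatpq}. In the Besov case, $\widetilde p=\widetilde r\in(0,1]$, so $J=n/\min\{1,\widetilde p\}=n/\widetilde r=\widetilde J$; in the Triebel--Lizorkin case, $\widetilde p\wedge\widetilde q=\widetilde r\in(0,1]$, so $J=n/\min\{1,\widetilde p,\widetilde q\}=n/\widetilde r=\widetilde J$. Either way, Definition \ref{def ad 0} applied to $\dot a^{\widetilde s}_{\widetilde p,\widetilde q}$ yields precisely the same triple $D>\widetilde J,\ E>\tfrac n2+\widetilde s,\ F>\widetilde J-\tfrac n2-\widetilde s$. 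Hence (i)$\Longleftrightarrow$(ii)$\Longleftrightarrow$(iii).

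There is no substantive obstacle; the only thing to be careful about is verifying that $\widetilde J$ from \eqref{tauJ2} is always at least $n$, so that $\widetilde r\in(0,1]$ and the various $\min\{1,\cdot\}$ expressions simplify. This is immediate from $\widetilde J=J_\tau+[(n\widehat\tau)\wedge(d/p)]\geq J_\tau\geq n$, using that each of the three branches of \eqref{tildeJ} gives $J_\tau\geq n$.
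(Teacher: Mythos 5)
Your proof is correct and follows essentially the same approach as the paper: unwinding Definition \ref{def ad 0} together with \eqref{ad new2} and verifying that, under \eqref{hatpq}--\eqref{hatr}, the parameter $J$ collapses to $\widetilde J$ in all three cases so that the $(D,E,F)$ constraints coincide. The paper does the bookkeeping a bit more tersely (proving (i)$\Leftrightarrow$(ii) and then noting (iii) is the special case $\widetilde p=\widetilde q=\widetilde r$), but the substance is identical.
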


Since $\dot a^{\widetilde s}_{\widetilde r,\widetilde r}
=\dot b^{\widetilde s}_{\widetilde r,\widetilde r}
=\dot f^{\widetilde s}_{\widetilde r,\widetilde r}$,
the last condition (iii) allows the reduction of the almost diagonality
to a space of the preferred type $\dot b$ or $\dot f$,
no matter the type of the original space $\dot a^{s,\tau}_{p,q}$ that we start with.

\begin{proof}[Proof of Lemma \ref{ad vs old}]
We first show that (i) and (ii) are equivalent.
For a $(D,E,F)$-almost diagonal matrix $B$,
the conditions of being $\dot a^{s,\tau}_{p,q}(d)$-almost diagonal, from \eqref{ad new2}, are
$D>\widetilde{J},$ $E>\frac{n}{2}+\widetilde{s},$ and
$F>\widetilde{J}-\frac{n}{2}-\widetilde{s},$
where both $\widetilde{J}$ and $\widetilde{s}$ are the same as in \eqref{tauJ2}.
On the other hand, for any $(\widetilde p,\widetilde q)$ the same as in \eqref{hatpq},
the conditions of being $\dot a^{\widetilde s}_{\widetilde p,\widetilde q}$-almost diagonal
[from Definition \ref{def ad 0}, which refers to \eqref{ad FJ}] are
$D>\frac{n}{\widetilde r},$ $E>\frac{n}{2}+\widetilde{s},$ and
$F>\frac{n}{\widetilde r}-\frac{n}{2}-\widetilde{s},$
and it is clear that the two sets of conditions agree
if $\widetilde r$ is the same as in \eqref{hatr}.
This proves the equivalence of \eqref{ad orig} and \eqref{ad hatpq},
and their equivalences with \eqref{ad hatr}
follow from the fact that $\widetilde p=\widetilde q=\widetilde r$
is a particular case of numbers that satisfy \eqref{hatpq}.
This finishes the proof of Lemma \ref{ad vs old}.
\end{proof}

{\color{red}
The following useful corollary also serves as an illustration of applying Lemma \ref{ad vs old} to deduce results about almost diagonal operators in our more general setting from already available in the existing literature.
}

\begin{corollary}\label{AB ad}
Let $s\in\mathbb R$, $\tau\in[0,\infty)$, $p\in(0,\infty)$, $q\in(0,\infty]$, and $d\in[0,n)$.
Let $A:=\{a_{Q,R}\}_{Q,R\in\mathscr Q}\subset\mathbb C$
and $B:=\{b_{R,P}\}_{R,P\in\mathscr Q}\subset\mathbb C$
be $\dot a^{s,\tau}_{p,q}(d)$-almost diagonal. Then
\begin{equation*}
A\circ B:=\left\{\sum_{R\in\mathscr{Q}}a_{Q,R}b_{R,P}\right\}_{Q,P\in\mathscr Q}
\end{equation*}
is also $\dot a^{s,\tau}_{p,q}(d)$-almost diagonal.
\end{corollary}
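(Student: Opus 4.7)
The plan is to reduce the corollary to the classical fact that almost diagonal matrices form a class closed under composition, exploiting Lemma \ref{ad vs old} to pass between the matrix-weighted setting of interest and the scalar setting of Frazier and Jawerth.

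First, by Lemma \ref{ad vs old}, the hypothesis that $A$ and $B$ are $\dot a^{s,\tau}_{p,q}(d)$-almost diagonal is equivalent to saying that both are $\dot a^{\widetilde s}_{\widetilde r,\widetilde r}$-almost diagonal, where $\widetilde s=s+n\widehat\tau$ and $\widetilde r=n/\widetilde J\in(0,1]$. Because $\widetilde p=\widetilde q=\widetilde r$, the Besov-type and Triebel--Lizorkin-type labels coincide, and we are in the classical scalar setting. Concretely, there exist a common admissible triple $(D,E,F)$ with $D>\widetilde J$, $E>\frac{n}{2}+\widetilde s$, and $F>\widetilde J-\frac{n}{2}-\widetilde s$, together with constants $C_A,C_B>0$, such that $|a_{Q,R}|\leq C_A b^{DEF}_{Q,R}$ and $|b_{R,P}|\leq C_B b^{DEF}_{R,P}$ for every $Q,R,P\in\mathscr Q$. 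Multiplying these pointwise bounds termwise yields
\begin{equation*}
|(A\circ B)_{Q,P}|\leq C_AC_B\sum_{R\in\mathscr Q}b^{DEF}_{Q,R}\,b^{DEF}_{R,P},\qquad Q,P\in\mathscr Q.
\end{equation*}

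Second, the heart of the matter is a Schur-type composition estimate for the reference kernel $b^{DEF}$: for some admissible triple $(D',E',F')$ in the same admissible range and some constant $C>0$,
\begin{equation*}
\sum_{R\in\mathscr Q}b^{DEF}_{Q,R}\,b^{DEF}_{R,P}\leq C\,b^{D'E'F'}_{Q,P},\qquad Q,P\in\mathscr Q.
\end{equation*}
This is the classical closure-under-composition property of almost diagonal operators, going back to \cite[Corollary 3.4]{fj90} and also a part of \cite[Theorem 6.20]{fjw91}. It is verified by symmetry-reducing to $\ell(Q)\leq\ell(P)$ and then decomposing the sum over $R$ into the three regimes $\ell(R)\geq\ell(P)$, $\ell(Q)\leq\ell(R)<\ell(P)$, and $\ell(R)<\ell(Q)$. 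At each fixed generation the spatial sum is controlled by the standard convolution-type inequality
\begin{equation*}
\int_{\mathbb R^n}\left(1+\frac{|u-y|}{\rho}\right)^{-D}\left(1+\frac{|y-v|}{\rho}\right)^{-D}dy
\lesssim\rho^n\left(1+\frac{|u-v|}{\rho}\right)^{-D},
\end{equation*}
valid because $D>\widetilde J\geq n$; the remaining geometric series in the generation index then sum finitely thanks to the admissibility of $E$ and $F$. A small loss of parameters may occur, but it can be absorbed because the admissibility conditions are all strict.

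Combining the pointwise inequality with the Schur-type estimate gives $|(A\circ B)_{Q,P}|\leq C'\,b^{D'E'F'}_{Q,P}$, so $A\circ B$ is $(D',E',F')$-almost diagonal with $(D',E',F')$ in the admissible range, hence $\dot a^{\widetilde s}_{\widetilde r,\widetilde r}$-almost diagonal. A final application of Lemma \ref{ad vs old} in the reverse direction converts this back into the conclusion that $A\circ B$ is $\dot a^{s,\tau}_{p,q}(d)$-almost diagonal, finishing the proof.

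The main technical obstacle, should one wish to give a fully self-contained derivation of the Schur-type estimate rather than invoke the classical result, is the careful bookkeeping in the middle regime $\ell(Q)\leq\ell(R)<\ell(P)$, where the two factors $b^{DEF}_{Q,R}$ and $b^{DEF}_{R,P}$ use different reference scales (one scaled by $\ell(R)$, the other by $\ell(P)$); Lemma \ref{33y} is the standard device for transferring spatial decay between these scales without losing the asymptotic form. The conceptual novelty of the corollary, however, lies not in that standard calculation but in the reduction afforded by Lemma \ref{ad vs old}, which brings the Frazier--Jawerth machinery to bear on the full matrix-weighted, $\tau$-parametrized setting.
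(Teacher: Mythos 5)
Your proof is correct and follows essentially the same strategy as the paper: use Lemma \ref{ad vs old} to pass to the scalar setting $\dot a^{\widetilde s}_{\widetilde r,\widetilde r}$, invoke the classical Frazier--Jawerth closure-under-composition result for almost diagonal matrices, and apply Lemma \ref{ad vs old} again to convert back. The paper simply cites \cite[Theorem 9.1]{fj90} for the composition step and omits the Schur-type sketch, but the substance and structure of the two arguments are the same.
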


\begin{proof}
Let both $\widetilde s$ and $\widetilde r$ be the same as in Lemma \ref{ad vs old}.
By that lemma, we find that both $A$ and $B$ are
$\dot f^{\widetilde s}_{\widetilde r,\widetilde r}$-almost diagonal.
Then, from \cite[Theorem 9.1]{fj90}, it follows that the composition $A\circ B$
is also $\dot f^{\widetilde s}_{\widetilde r,\widetilde r}$-almost diagonal.
By Lemma \ref{ad vs old} again, we conclude that
$A\circ B$ is $\dot a^{s,\tau}_{p,q}(d)$-almost diagonal,
which completes the proof of Corollary \ref{AB ad}.
\end{proof}

{\color{red}
\subsection*{Acknowledgements}

We would like to thank the anonymous referees for several constructive comments that improved the presentation.
}

\bigskip

\noindent Fan Bu

\medskip

\noindent Laboratory of Mathematics and Complex Systems (Ministry of Education of China),
School of Mathematical Sciences, Beijing Normal University, Beijing 100875, The People's Republic of China

\smallskip

\noindent{\it E-mail:} \texttt{fanbu@mail.bnu.edu.cn}

\bigskip

\noindent Tuomas Hyt\"onen

\medskip

\noindent Department of Mathematics and Systems Analysis, Aalto University, P.O. Box 11100 (Otakaari~1), FI-00076 Aalto, Finland

\noindent{\it E-mail:} \texttt{tuomas.p.hytonen@aalto.fi}

\bigskip

\noindent Dachun Yang (Corresponding author) and Wen Yuan

\medskip

\noindent Laboratory of Mathematics and Complex Systems (Ministry of Education of China),
School of Mathematical Sciences, Beijing Normal University, Beijing 100875, The People's Republic of China

\smallskip

\noindent{\it E-mails:} \texttt{dcyang@bnu.edu.cn} (D. Yang)

\noindent\phantom{{\it E-mails:} }\texttt{wenyuan@bnu.edu.cn} (W. Yuan)

\end{document}